\newenvironment{tproof}{
  
  \begin{proof}
}{\end{proof}}
\newenvironment{cproof}{
  
  \begin{proof}
}{\end{proof}}
\numberwithin{equation}{section}
\newcommand{\force}{{\hspace{0.02 cm}\Vdash}}
\newtheorem{prop}{Proposition}[section]
\newtheorem{dfn}[prop]{Definition}
\newtheorem{lemma}[prop]{Lemma}
\newtheorem{fact}[prop]{Fact}
\newtheorem{cor}[prop]{Corollary}
\newtheorem{thm}[prop]{Theorem}
\newtheorem{clm}[prop]{Claim}
\newtheorem{obs}[prop]{Observation}
\newtheorem{prob}[prop]{Problem}
\newcommand{\sqt}{\sigma \mb Q}
\newcommand{\ssqt}{\sigma' \mb Q}
\newcommand{\mc}[1]{\mathcal{#1}}
\newcommand{\mb}[1]{\mathbb{#1}}
\newcommand{\oo}{\omega}
\newcommand{\uhr}{\upharpoonright}
\newcommand{\omg}{{\omega_1}}
\newcommand{\ecl}[1]{\langle #1 \rangle}
\DeclareMathOperator{\htt}{ht}
\def\<{\left\langle}
\def\>{\right\rangle}
\def\br#1;#2;{\bigl[ {#1} \bigr]^ {#2} }
\newcommand{\mf}[1]{\mathfrak{#1}}
\newcommand{\unif}[3]{\textmd{Unif}_{#1}(#2,#3)}
\newcommand{\unifc}[3]{\textmd{cUnif}_{#1}(#2,#3)}
\newcommand{\setm}{\setminus}
\newcommand{\subs}{\subset}
\newcommand{\dom}{\operatorname{dom}}
\newcommand{\ran}{\operatorname{ran}}
\newcommand{\vareps}{\varepsilon}
\newcommand{\smf}{\hspace{0.008 cm}^\smallfrown}
\newcommand{\seal}{\mf{sl}}
\title[Ladder system uniformization on trees I \& II]{Ladder system uniformization on trees I \& II}
\date{\today}
  \author{D\'aniel T. Soukup}
\address[D. T. Soukup]{Universität Wien, 
Kurt Gödel Research Center for Mathematical Logic, 
Währinger Strasse 25,
1090 Wien, Austria}
 \email[Corresponding author]{daniel.soukup@univie.ac.at}
 \urladdr{http://www.logic.univie.ac.at/$\sim  $soukupd73/}
\newtheorem*{rep@theorem}{\rep@title}
\newcommand{\newreptheorem}[2]{%
\newenvironment{rep#1}[1]{%
 \def\rep@title{#2 \ref{##1}}%
 \begin{rep@theorem}}%
 {\end{rep@theorem}}}
\subjclass[2010]{03E05,03E35,03E50}
\keywords{ladder system, uniformization, Suslin tree, special tree, Aronszajn tree, diamond, colouring}
\begin{document}
 \begin{abstract}  

Suppose that $T$ is a tree of height $\omg$. We say that a ladder system colouring $(f_\alpha)_{\alpha\in \lim\omg}$ has a \emph{$T$-uniformization} if there is  a function $\varphi$ defined on a subtree $S$ of $T$ so that for any $s\in S_\alpha$ of limit height and almost all $\xi\in \dom f_\alpha$, $\varphi(s\uhr \xi)=f_\alpha(\xi)$. In sharp contrast to the classical theory of uniformizations on $\omg$, J. Moore proved that CH is consistent with the statement that any ladder system colouring has a $T$-uniformization (for any Aronszajn tree $T$). 
However, we show that if $S$ is a Suslin tree then (i) CH implies that there is a ladder system colouring without $S$-uniformization; (ii) the restricted forcing axiom $MA(S)$ implies that any ladder system colouring has an $\omg$-uniformization. For an arbitrary Aronszajn tree $T$, we show how diamond-type assumptions affect the existence of ladder system colourings without a $T$-uniformization. 

Furthermore, it is consistent that for any Aronszajn tree $T$ and ladder system $\mathbf C$  there is a colouring of $\mathbf C$ without a $T$-uniformization; however, and quite surprisingly,  $\diamondsuit^+$ implies that for any ladder system $\mathbf C$ there is an Aronszajn tree $T$ so that any monochromatic colouring of $\mathbf C$ has a $T$-uniformization. We also prove positive uniformization results  in ZFC  for some well-studied trees of size continuum.

\end{abstract}
\maketitle

\setcounter{tocdepth}{1}
\tableofcontents

\section{Introduction}

A sequence $\textbf{C}=(C_\alpha)_{\alpha\in \lim \omg}$ indexed by the set of countable limit ordinals is a \emph{ladder system on $\omg$} if $C_\alpha$ is a cofinal subset of $\alpha$ of  type $\omega$. A colouring of the ladder system $\textbf C$ is a sequence of maps $\textbf{f}=(f_\alpha)_{\alpha\in \lim\omg}$ so that  $\dom f_\alpha=C_\alpha$. Now, given such a ladder system colouring, one might ask if there is a single global function $\varphi$ defined on $\omg$ that agrees with all the local colourings $f_\alpha$ almost everywhere on $C_\alpha$; we refer to such a global map $\varphi$ as an \emph{$\omg$-uniformization}. 

 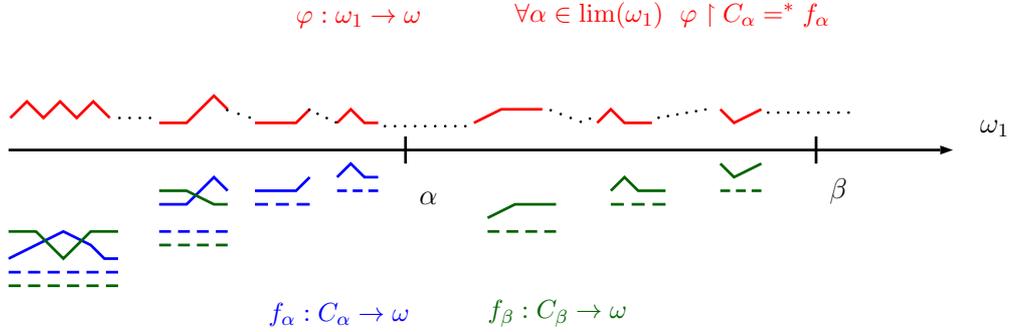
\begin{figure}[H] 
 \centering

\psscalebox{0.9 0.9} 
{
\begin{pspicture}(0,-2.3778028)(17.788937,2.3778028)
\definecolor{colour0}{rgb}{0.0,0.39215687,0.0}
\psline[linecolor=black, linewidth=0.04, arrowsize=0.05291667cm 2.0,arrowlength=1.4,arrowinset=0.0]{->}(0.0089373775,0.29813477)(13.808937,0.29813477)
\rput[bl](14.208938,0.49813476){\Large{$\omega_1$}}
\psline[linecolor=black, linewidth=0.04](5.8089375,0.49813476)(5.8089375,0.09813476)
\psline[linecolor=black, linewidth=0.04](11.808937,0.49813476)(11.808937,0.09813476)
\psline[linecolor=blue, linewidth=0.04, linestyle=dashed, dash=0.17638889cm 0.10583334cm](0.0089373775,-1.5018653)(1.6089374,-1.5018653)
\psline[linecolor=blue, linewidth=0.04, linestyle=dashed, dash=0.17638889cm 0.10583334cm](2.2089374,-0.90186524)(3.2089374,-0.90186524)
\psline[linecolor=blue, linewidth=0.04, linestyle=dashed, dash=0.17638889cm 0.10583334cm](3.6089373,-0.5018652)(4.4089375,-0.5018652)
\psline[linecolor=blue, linewidth=0.04, linestyle=dashed, dash=0.17638889cm 0.10583334cm](4.8089375,-0.30186522)(5.4089375,-0.30186522)
\psline[linecolor=colour0, linewidth=0.04, linestyle=dashed, dash=0.17638889cm 0.10583334cm](8.808937,-0.5018652)(9.608937,-0.5018652)
\psline[linecolor=colour0, linewidth=0.04, linestyle=dashed, dash=0.17638889cm 0.10583334cm](10.408937,-0.30186522)(11.008938,-0.30186522)
\psline[linecolor=colour0, linewidth=0.04, linestyle=dashed, dash=0.17638889cm 0.10583334cm](7.0089374,-0.90186524)(8.008938,-0.90186524)
\psline[linecolor=colour0, linewidth=0.04, linestyle=dashed, dash=0.17638889cm 0.10583334cm](2.2089374,-1.1018653)(3.2089374,-1.1018653)
\psline[linecolor=colour0, linewidth=0.04, linestyle=dashed, dash=0.17638889cm 0.10583334cm](0.0089373775,-1.7018652)(1.6089374,-1.7018652)
\rput[bl](6.0089374,-0.5018652){\Large{$\alpha$}}
\rput[bl](12.008938,-0.5018652){\Large{$\beta$}}
\rput[bl](7.0089374,-2.3018653){\textcolor{colour0}{\large{$f_\beta:C_\beta\to \omega$}}}
\rput[bl](3.8089373,-2.3018653){\textcolor{blue}{\large{$f_\alpha:C_\alpha\to \omega$}}}
\psline[linecolor=blue, linewidth=0.04](2.2089374,-0.5018652)(2.6089373,-0.5018652)(3.0089374,-0.10186523)(3.2089374,-0.30186522)
\psline[linecolor=colour0, linewidth=0.04](2.2089374,-0.30186522)(2.6089373,-0.30186522)(3.0089374,-0.5018652)(3.2089374,-0.5018652)
\psline[linecolor=blue, linewidth=0.04](0.0089373775,-1.3018652)(0.8089374,-0.90186524)(1.2089374,-1.1018653)(1.4089373,-1.3018652)(1.6089374,-1.3018652)(1.6089374,-1.3018652)
\psline[linecolor=colour0, linewidth=0.04](0.0089373775,-0.90186524)(0.40893736,-0.90186524)(0.8089374,-1.3018652)(1.2089374,-0.90186524)(1.6089374,-0.90186524)
\psline[linecolor=blue, linewidth=0.04](3.6089373,-0.30186522)(4.208937,-0.30186522)(4.4089375,-0.10186523)
\psline[linecolor=blue, linewidth=0.04](4.8089375,-0.10186523)(5.0089374,0.09813476)(5.208937,-0.10186523)(5.4089375,-0.10186523)
\psline[linecolor=colour0, linewidth=0.04](7.0089374,-0.70186526)(7.4089375,-0.5018652)(8.008938,-0.5018652)
\psline[linecolor=colour0, linewidth=0.04](8.808937,-0.30186522)(9.008938,-0.10186523)(9.208938,-0.30186522)(9.608937,-0.30186522)
\psline[linecolor=colour0, linewidth=0.04](10.408937,0.09813476)(10.608937,-0.10186523)(11.008938,0.09813476)
\psline[linecolor=red, linewidth=0.04](2.2089374,0.6981348)(2.6089373,0.6981348)(3.0089374,1.0981348)(3.2089374,0.89813477)
\psline[linecolor=red, linewidth=0.04](3.6089373,0.6981348)(4.208937,0.6981348)(4.4089375,0.89813477)
\psline[linecolor=red, linewidth=0.04](4.8089375,0.6981348)(5.0089374,0.89813477)(5.208937,0.6981348)(5.4089375,0.6981348)
\psline[linecolor=red, linewidth=0.04](6.8089375,0.6981348)(7.208937,0.89813477)(7.8089375,0.89813477)
\psline[linecolor=red, linewidth=0.04](8.608937,0.6981348)(8.808937,0.89813477)(9.008938,0.6981348)(9.408937,0.6981348)
\psline[linecolor=red, linewidth=0.04](10.408937,0.89813477)(10.608937,0.6981348)(11.008938,0.89813477)
\psline[linecolor=black, linewidth=0.04, linestyle=dotted, dotsep=0.10583334cm](1.6089374,0.77086204)(2.0937858,0.77086204)
\psline[linecolor=black, linewidth=0.04, linestyle=dotted, dotsep=0.10583334cm](3.184695,0.89207417)(3.5483313,0.77086204)
\psline[linecolor=black, linewidth=0.04, linestyle=dotted, dotsep=0.10583334cm](4.4237523,0.89813477)(4.3968163,0.89207417)(4.8089375,0.6981348)
\psline[linecolor=black, linewidth=0.04, linestyle=dotted, dotsep=0.10583334cm](5.4877253,0.6496499)(6.6998463,0.6496499)
\psline[linecolor=black, linewidth=0.04, linestyle=dotted, dotsep=0.10583334cm](7.9119678,0.89207417)(8.408937,0.6981348)(8.518028,0.77086204)
\psline[linecolor=black, linewidth=0.04, linestyle=dotted, dotsep=0.10583334cm](9.487725,0.77086204)(10.208938,0.89813477)
\psline[linecolor=red, linewidth=0.04](0.0331798,0.77086204)(0.27560404,1.0132862)(0.51802826,0.77086204)(0.7604525,1.0132862)(1.0028768,0.77086204)(1.245301,1.0132862)(1.4877253,0.77086204)
\rput[bl](4.208937,2.0981348){\textcolor{red}{\large{$\varphi:\omega_1\to \omega$}}}
\rput[bl](7.4089375,2.0981348){\textcolor{red}{\large{$\forall \alpha\in \lim(\omg)$\;  $\varphi\uhr C_\alpha=^*f_\alpha$}}}
\psline[linecolor=black, linewidth=0.04, linestyle=dotted, dotsep=0.10583334cm](11.087726,0.8496499)(12.299847,0.8496499)
\end{pspicture}
}
    \caption{Uniformization on $\omg$}
  \label{diag:unif0}
   \end{figure}

Apriori, nothing prevents the existence of $\varphi$ since two local maps $f_\alpha$ and $f_\beta$ are only defined on finitely many common points and finite errors are allowed (see Figure \ref{diag:unif0}).

The existence of $\omg$-uniformizations for arbitrary colourings is not decided by the usual ZFC axioms. This topic has been extensively studied due to various connections to algebra, in particular to the Whitehead problem and its relatives \cite{eklof1992uniformization,eklof1994hereditarily,whitehead1,whitehead2}, to topology \cite{balogh2004uniformization,devlin1979note,shelah1989consistent, watson1986locally}, and to fundamental questions in set theory \cite{larson2016automorphisms,justinmin}, in particular, to the study of forcing axioms that are compatible with the Continuum Hypothesis \cite{avraham1978consistency,shelah2017proper}. 

Our current interest lies in understanding a relatively new version of the uniformization property introduced by Justin Moore, a notion that played a key role in understanding uncountable minimal linear orders \cite{justinmin}. If $T$ is a tree of height $\omg$, we say that $S\subs T$ is a subtree if $S$ is downward closed and \emph{pruned in $T$} i.e., if any $s\in S$ has extensions with arbitrary large height below $\htt(T)=\omg$. Given some $s\in S$ and $\xi<\htt(s)$, we let $s\uhr \xi$ be the unique predecessor of $s$ in $S$ of height $\xi$.

Now, the main definition is the following.

\begin{dfn}\cite{justinmin}\label{dfn:Tunif}
Suppose that $T$ is a tree of height $\omg$, $\textbf C$ is a ladder system and  $\textbf f$ is  a colouring of $\textbf C$. A $T$-uniformization of $\textbf f$ is a map $\varphi$ on a subtree $\dom \varphi=S$ of $ T$ so that for all $s\in S$ of limit height $\alpha<\omg$ and almost all $\xi\in C_\alpha$,  $$\varphi(s\uhr \xi)=f_\alpha(\xi).$$ 
\end{dfn}

In the above situation, we say that \emph{$\varphi$ uniformizes $\mathbf f$ (on $S$)}. In Figure \ref{diag:unif}, we aimed to emphasize that $\varphi$ is only required to agree with the local colouring $f_\alpha$ along those branches that are bounded in $S$. One can imagine this map $\varphi$ as a coherent collection of countable maps that aim to approximate a uniformization map defined on $\omg$. Indeed, if $S$ does have an uncountable branch then it defines a uniformization on $\omg$ in the classical sense.


 \begin{figure}[H] 
 \centering
 \psscalebox{0.8 0.8} 
{
\begin{pspicture}(0.4,-4.0495653)(11.01,4.0495653)

\rput[bl](10.6,3.5504348){\LARGE{$T$}}
\rput[bl](1.1652174,3.6895652){\LARGE{$\omega_1$}}

\psline[linecolor=black, linewidth=0.04](3.4,2.5504348)(4.8,-2.8495653)(9.2,-2.8495653)(10.2,2.5504348)
\psline[linecolor=black, linewidth=0.04, arrowsize=0.05291667cm 2.0,arrowlength=1.4,arrowinset=0.0]{->}(2.6,-2.6495652)(2.6,3.5504348)


\rput[bl](1.173913,1.8547826){\LARGE{$\alpha$}}
\psline[linecolor=black, linewidth=0.04, linestyle=dashed, dash=0.17638889cm 0.10583334cm](2.2,1.5504348)(11.0,1.5504348)

\psline[linecolor=blue, linewidth=0.04, linestyle=dashed, dash=0.17638889cm 0.10583334cm](2.2,-2.4495652)(2.2,-1.4495652)
\psline[linecolor=blue, linewidth=0.04, linestyle=dashed, dash=0.17638889cm 0.10583334cm](2.2,-0.8495652)(2.2,0.15043478)
\psline[linecolor=blue, linewidth=0.04, linestyle=dashed, dash=0.17638889cm 0.10583334cm](2.2,0.55043477)(2.2,1.1504347)

\rput[bl](1.3,-4.0495653){\textcolor{blue}{\LARGE{$f_\alpha:C_\alpha\to \omega$}}}

\psline[linecolor=blue, linewidth=0.04](2,-2.4495652)(1.6,-2.0495653)(2,-1.8495653)(2,-1.4495652)
\psline[linecolor=blue, linewidth=0.04](2,-0.8495652)(2,-0.24956521)(1.8,-0.04956522)
\psline[linecolor=blue, linewidth=0.04](1.8,0.55043477)(1.8,0.9504348)(2,1.1504347)

\rput[bl](4.3565216,3.3765218){\textcolor{red}{\LARGE{$\varphi:S\to\omega$}}}

\psbezier[linecolor=red, linewidth=0.04](4.4,2.9504347)(4.4,1.9133978)(4.8,-2.6495652)(6.8,-2.649565217391305)(8.8,-2.6495652)(9.0,1.9133978)(9.0,2.9504347)
\psdots[linecolor=red, dotsize=0.3](6.0,1.5504348)
\psdots[linecolor=red, dotsize=0.2](7.0272727,1.5686166)
\psdots[linecolor=black, dotstyle=o, dotsize=0.3, fillcolor=white](8.0,1.5504348)

 \psline[linecolor=red, linewidth=0.04](7.0,-2.4495652)(6.8,-1.4495652)
 \psline[linecolor=black, linewidth=0.04](6.8,-1.4495652)(6.8,-0.8495652)
 \psline[linecolor=red, linewidth=0.04](6.8,-0.8495652)(6.0,0.15043478)
 \psline[linecolor=black, linewidth=0.04](6.0,0.15043478)(6.0,0.55043477)
 \psline[linecolor=red, linewidth=0.04](6.0,0.55043477)(6.0,1.1504347)
 \psline[linecolor=black, linewidth=0.04](6.0,1.1504347)(6.0,1.46)

  \psline[linecolor=red, linewidth=0.04](6.8148465,-2.4307077)(6.3255763,-2.14678)(6.6608396,-1.8508095)(6.5581684,-1.4642106)
 \psline[linecolor=red, linewidth=0.04](6.5347457,-0.91708434)(6.0045314,-0.27544498)(5.622701,-0.24587588)
 \psline[linecolor=red, linewidth=0.04](5.5914893,0.5461795)(5.5914893,0.94617945)(5.791489,1.1461794)

\psline[linecolor=black, linewidth=0.04](6.8,-0.8495652)(7.52,-0.12956522)(7.52,0.8304348)(7.9,1.45)

\psline[linecolor=red, linewidth=0.04](6.46087,-0.4147826)(6.8,0.15043478)
\psline[linecolor=black, linewidth=0.04](6.8028083,0.12312077)(6.8062825,0.59593064)
\psline[linecolor=red, linewidth=0.04](6.8,0.55952567)(6.7909093,1.1686167)
\psline[linecolor=black, linewidth=0.04](6.7909093,1.1686167)(7.0,1.5140711)

\psline[linecolor=red, linewidth=0.04](6.464278,0.51339835)(6.4634094,0.92549956)(6.590247,1.131873)
\psline[linecolor=red, linewidth=0.04](6.4104767,-0.1938992)(6.4666247,0.07317074)(6.677621,0.16747934)

\end{pspicture}
}
    \caption{Uniformization on trees}
  \label{diag:unif}
   \end{figure}
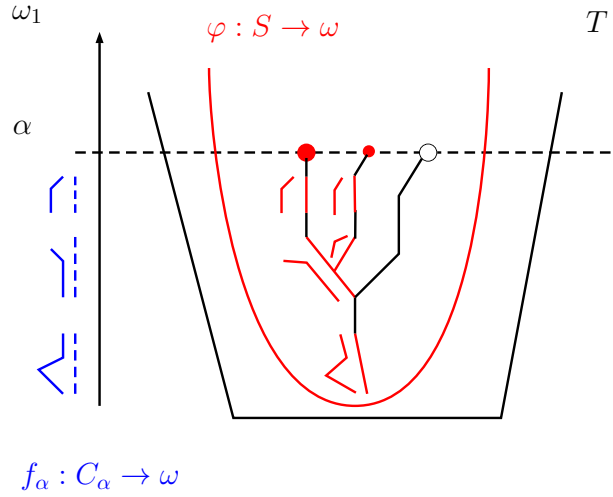

Prior to Moore's and our work, a similar theme of 'uniformization on trees' was investigated by Zoran Spasojevic \cite{treeladder}, in connection to tree topologies, but mostly for a restricted class of colourings that correspond to topological separation axioms.
\medskip

The goal of our project was to contrast the theory of $T$-uniformizations and $\omg$-uniformizations. 
First, note that any subtree of $\omg$ (in our convention of a subtree) must be $\omg$ itself, and so $\dom \varphi=\omg$ for any $\omg$-uniformization $\varphi$. We will say that a $T$-uniformization $\varphi$ of some ladder system colouring is \emph{full} if $\dom \varphi=T$. It is easily seen that if a ladder system colouring has an $\omg$-uniformization then it has a full $T$-uniformization for any tree $T$ of height $\omg$ (and the global colouring on $T$ can be chosen so that it only depends on the levels). In turn, positive results from the classical theory of uniformizations extend to positive results on $T$-uniformizations.

However, there is great flexibility in picking very different domains for $T$-uniformizations of different colourings $\textbf f$. Furthermore, given a $T$-uniformization $\varphi$ and some level $\alpha$ of the tree, the number of $\xi\in C_\alpha$ so that $\varphi(s\uhr \xi)\neq f_\alpha(\xi)$ generally depends on the choice of $s\in S_\alpha$. Thus, it seems (and actually is) harder to negate the existence of $T$-uniformizations.

 
 

\medskip


 Given a ladder system $\textbf C$ on $\omg$, an \emph{$n$-colouring of $\textbf C$} is a sequence of maps $f_\alpha:C_\alpha\to n$.\footnote{We will use this definition for $n=2$ and $n=\oo$.} We say that  $\textbf f$ is \emph{monochromatic} if each  $f_\alpha$ is constant. Now, we will write

\begin{itemize}
 \item $\unif{n}{T}{\textbf C}$ if any $n$-colouring of $\textbf C$ has a $T$-uniformization, and
 \item $\unifc{n}{T}{\textbf C}$ if all monochromatic $n$-colouring of $\textbf C$ has a $T$-uniformization.
\end{itemize}

\medskip

Let us briefly review some fundamental results about ladder system uniformizations: the study was initiated by Saharon Shelah in the 1970s, as he isolated the uniformization property as the combinatorial essence of the Whitehead problem (after solving the problem in \cite{shelah1974infinite}). First, the two extreme cases are summarized below.

\begin{enumerate}[(I)]
 \item $\textmd{MA}_{\aleph_1}$ implies  $\unif{\oo}{\omg}{\textbf C}$  for any ladder system $\textbf C$ \cite{shelah2017proper}, and
 \item\label{it:dsh}  $2^{\aleph_0}<2^{\aleph_1}$ implies $\neg \unifc{2}{\omg}{\textbf C}$ for any ladder system $\textbf C$ \cite{devlin1978weak}.
\end{enumerate}

 We should mention that for any ladder system colouring, there is a (proper) poset which introduces an $\omg$-uniformization but which adds no new reals. However, the latter result implies that these posets cannot be iterated without adding reals, and hence the uniformization property is one of the fundamental barriers to the consistency of forcing axioms compatible with the CH, at least for many reasonably large classes of posets.

At this point, the following theorem of Moore on $T$-uniformizations might come as a surprise:

\begin{enumerate}[(I)]
\setcounter{enumi}{2}
 \item\label{it:moore} it is consistent with CH that $\unif{\oo}{T}{\textbf C}$ holds for any Aronszajn tree\footnote{I.e., a tree of height $\omg$ with countable levels but no uncountable chains} $T$ and ladder system $\textbf C$ \cite[Theorem 1.9]{justinmin}.
\end{enumerate}

As earlier said, Moore's main motivation was to study uncountable linear orders, and he proved that in any model as above, the only minimal uncountable linear orders are $\omg$ and its reverse $-\omg$ \cite[Lemma 3.3]{justinmin}. 
Answering a question of James Baumgartner, this result was later extended by the present author to show that it is consistent with CH that there is a Suslin tree\footnote{A Suslin tree is an Aronszajn tree with no uncountable antichains.} $R$ and $\unif{\oo}{T}{\textbf C}$ holds for all $\textbf C$ and any Aronszajn tree $T$ that embeds no derived subtree of $R$ \cite[Theorem 2.1 and Corollary 2.3]{soukup2018model}. The latter was applied to show that the existence of a Suslin tree does not imply that there are minimal uncountable linear orders beside $\pm \omg$.
The author have used uniformizations on trees to study \emph{strongly surjective linear orders} \cite{stronglysurj} and ladder systems on trees to study graph chromatic number problems \cite{soukuptrees}.

\medskip

Initially, our motivation was to understand if Moore's  model can contain any Suslin trees, or if our latter theorem is optimal and CH does not allow uniformization on Suslin trees. Upon answering this question, we extended various classical results to uniformizations on trees but also found several unexpected positive uniformization results. The results of this manuscript will appear in two separate papers \cite{treeunifA,treeunifB}.



\subsubsection*{Colourings from (weak) diamonds} 
First, in Section \ref{sec:nonunif}, we show that  $2^{\aleph_0}<2^{\aleph_1}$ implies $\neg \unifc{2}{T}{\textbf C}$ for all Suslin trees $T$. In turn, Moore's model from point (\ref{it:moore}) contains no Suslin trees, and our \cite[Theorem 2.1 and Corollary 2.3]{soukup2018model} are optimal.

In Section \ref{sec:nonunif2}, we show how variations of the diamond principle imply  that there are ladder system colourings without $T$-uniformizations for arbitrary Aronszajn trees. As expected, stronger diamonds imply that we can take care of more trees and find even monochromatic 2-colourings without $T$-uniformizations.  In fact, we prove that the parametrized weak diamond $\diamondsuit^\omg(\textmd{non}(\mc M))$ implies $\neg \unif{2}{T}{\textbf C}$ for any $\aleph_1$-tree $T$ and ladder system $\textbf C$. Moreover, $\diamondsuit$ implies that for any Aronszajn tree $T$ there is a ladder system $\textbf C$ so that  $\neg \unifc{2}{T}{\textbf C}$. To complete the picture, we present a model of the GCH (and in fact, of $\diamondsuit$) in which for any $\aleph_1$-tree $T$ and any ladder system $\textbf C$, $\neg \unifc{2}{T}{\textbf C}$ holds. 

Returning to Suslin trees, we show that a natural ccc poset which introduces a uniformization for a given colouring preserves all Suslin trees. Hence, for any Suslin tree $S$, the restricted forcing axiom $MA(S)$\footnote{$MA(S)$ is a weak version of Martin's Axiom. We only require the existence of generic filters for posets which preserve the fixed Suslin tree $S$.} implies that any ladder system colouring has an $\omg$-uniformization. This is done in Section \ref{sec:forceunif}.

\subsubsection*{Constructing trees and uniformizations} 

 In Section \ref{sec:zfc}, we prove one of our  main results: if $\ssqt$ is the tree of all well ordered subsets of $\mb Q$ with a maximum then any ladder system colouring has a $\ssqt$-uniformization; moreover, this fact will be witnessed by a \emph{single master colouring} $h:\ssqt\to \omega$ regardless of the choice of ladder system and its colouring. 
One might say that this is not so surprising as the tree $\ssqt$ is far from Aronszajn: although it has no uncountable chains, the levels are of size continuum and furthermore, $\ssqt$ satisfies rather strong closure properties. We present another variation, still using ZFC only but the larger non-special tree $\sqt$, to ensure the existence of uniformizations defined on non-special subtrees. The latter construction is reminiscent of the techniques applied in \cite{soukuptrees}.

Next, we  show that $\diamondsuit^+$ implies that for any ladder system $\textbf C$, there is an Aronszajn tree $T$ so that $\unifc{\oo}{T}{\textbf C}$ holds. The tree $T$ can either be made special or alternatively, we can ensure that any colouring of $\textbf C$ has a $T$-uniformization defined on a Suslin subtree of $T$. We present these constructions in Section \ref{sec:diamondunif}, and reflect on the sizes of antichains in such trees in Section \ref{sec:largeac}.


\medskip

Our paper is concluded with some open problems and remarks in Section \ref{sec:problems}. Let us also mention that the results of this article are summarized in two lecture videos that are accessible on the author's webpage\footnote{Please see here \url{http://www.logic.univie.ac.at/~soukupd73/}} and, in fact, Theorem \ref{thm:dmplus} is presented in detail.







\subsection*{Notations and terminology} We use standard notations from set theory following classical textbooks \cite{kunen}. In forcing arguments, stronger conditions are smaller.

For us, a tree $(T,<_T)$ is a partially ordered set so that $t^{\downarrow}=\{s\in T:s<_T t\}$ is well ordered for any $t\in T$; we will usually omit the subscript from $<_T$ if it leads to no confusion. Let $\htt(t)$ denote the order type of $t^{\downarrow}$, and let $T_\alpha$ denote the $\alpha$th level of $T$ i.e., all elements $t\in T$ with  $\htt(t)=\alpha$ (and let $T_{<\beta}=\cup_{\alpha<\beta}T_\alpha$). Given $t\in T_\beta$ and $\alpha<\beta$, we let $t\uhr \alpha$ denote the unique element of $T_\alpha$ below $t$. Since all the subtrees $S\subs T$ we work with are downward closed, the levels $S_\alpha$ are just $S\cap T_\alpha$.

An $\aleph_1$-tree is a tree of height $\omg$ with countable levels. An \emph{Aronszajn tree} is an  $\aleph_1$-tree with no uncountable chains, and a \emph{Suslin tree} is an  $\aleph_1$-tree without uncountable chains or uncountable antichains.

The assumption $\diamondsuit$ denotes the existence of a sequence $W=(W_\alpha)_{\alpha<\omg}$ so that for any $X\subs \omg$, the set $\{\alpha<\omg:X\cap\alpha=W_\alpha\}$ is stationary. Similarly, $\diamondsuit^+$ asserts the existence of a sequence $W=(W_\alpha)_{\alpha<\omg}$ so that $|W_\alpha|\leq \oo$ and for any $X\subs \omg$, there is a closed unbounded $D\subset \omg$ so that  for any $\alpha\in D$,  $X\cap \alpha,D\cap \alpha\in W_\alpha.$ 


We will use the following standard fact multiple times.

\begin{fact}\label{fact:suslin}
Suppose that $S$ is an $\aleph_1$-tree and $M$ is a countable elementary submodel of some $H(\Theta)$ with $S\in M$. Let $Y\subs S$ be an element of $M$ and $t\in Y\setm M$. Then

\begin{enumerate}
    \item if $S$ has no uncountable antichains then  $ Y\cap M\cap t^\downarrow\neq \emptyset$, and
    \item if $S$ has no uncountable chains then  $Y\cap M\setm t^\downarrow\neq \emptyset$.
\end{enumerate}
\end{fact}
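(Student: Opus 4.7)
My plan is to handle both parts by exhibiting a definable combinatorial witness inside $M$, combined with one reflection step that leverages the chain or antichain hypothesis on $S$.

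For (1), I would consider $A := \{y \in Y : y \text{ is minimal in } Y\}$. As an antichain in $S$, $A$ is countable by hypothesis; since $A$ is definable from $Y, S \in M$, one has $A \in M$, and being countable this forces $A \subs M$. The crux is then to observe that $Y \cap (t^\downarrow \cup \{t\})$ is a nonempty subset of the well-ordered chain $t^\downarrow \cup \{t\}$, so it has a minimum $a \leq t$, and this $a$ is automatically minimal in all of $Y$: any $y' \in Y$ with $y' < a$ would lie in $Y \cap t^\downarrow$ and contradict minimality of $a$ in the chain. Hence $a \in A \subs M$, and because $t \notin M$ we must have $a \neq t$; so $a < t$ and $a \in Y \cap M \cap t^\downarrow$.

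For (2), the naive dual (looking at maximal elements of $Y$) fails since $S$ may carry uncountable antichains. Instead I would argue by contradiction: assume $Y \cap M \subs t^\downarrow$ and derive an uncountable chain in $S$. First, $Y$ must be uncountable (else $Y \in M$ is countable, so $Y \subs M$ and $t \in M$), and $\htt(t) \geq \delta := \sup(M \cap \omg)$ (else $S_{\htt(t)}$ is countable and in $M$, again forcing $t \in M$). For each $\alpha \in M \cap \omg$ the level $Y_\alpha := Y \cap S_\alpha$ is countable and in $M$, hence $Y_\alpha \subs M \subs t^\downarrow$; since $t^\downarrow \cap S_\alpha = \{t \uhr \alpha\}$, this yields $|Y_\alpha| \leq 1$. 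A first elementarity step promotes the bound ``$\forall \alpha < \omg,\, |Y_\alpha| \leq 1$'' from $M$ to $V$, and a second promotes ``$Y$ is a chain'' from $M$ to $V$ — the latter holds in $M$ because every $y \in Y \cap M$ of height $\alpha$ must equal $t \uhr \alpha$ and so lies on $t$'s branch. The resulting uncountable chain in $S$ contradicts the hypothesis.

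The main obstacle is the elementarity step in (2): one must verify that the reflected formulas involve only parameters available in $M$ (namely $Y$, $S$, $\omg$) and do not accidentally mention $t$. Phrasing the consequence as ``$Y$ is a chain'' rather than as a statement directly about $t^\downarrow$ is precisely the trick that makes the reflection legitimate.
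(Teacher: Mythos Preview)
Your proof is correct. For (1), your approach is essentially the same as the paper's: both pick an antichain $A \subseteq Y$ definable in $M$ (you use the set of minimal elements of $Y$; the paper takes an arbitrary maximal antichain in $Y$), observe it is countable and hence contained in $M$, and then locate an element of $A$ below $t$.

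For (2), your route is genuinely different, and you overlooked the natural dual of (1). The paper simply takes a \emph{maximal chain} $A \subseteq Y$ in $M$, notes it is countable by the no-uncountable-chains hypothesis and hence $A \subseteq M$, and observes that since $t \in Y \setminus A$, maximality of $A$ forces some $a \in A$ to be incompatible with $t$, giving $a \in Y \cap M \setminus t^\downarrow$. Your contradiction argument via reflecting ``$Y$ is a chain'' works too, but is longer and involves two elementarity steps (the first, that $|Y_\alpha| \leq 1$ for all $\alpha$, is redundant --- it is implied by the second). The paper's approach is shorter and displays the exact duality between (1) and (2): swap ``maximal antichain'' with ``maximal chain'' and the proofs are formally identical. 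What your argument buys is that it avoids any appeal to Zorn's lemma inside $M$, relying purely on reflection; this is arguably more elementary but at the cost of directness.
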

\begin{cproof}(1) Indeed, take a maximal antichain $A\subs Y$ so that $A\in M$, and note that $A\subs M$ as $A$ is countable. Then $\vareps=\sup \htt[A]+1\in M$ and $M\models t\uhr \vareps$ is compatible with some element of $Y$. So there must be some $s\in A$ so that $s$ is compatible with $t\uhr \vareps$ and so $s\leq t\uhr \vareps\leq t$.

(2) If we choose $A\subs Y$ to be a maximal chain in $A$ then again, $A$ is countable and  we must have some element of $A$ off the branch $t^\downarrow$.
\end{cproof}

\smallskip

\subsection*{Acknowledgments} We thank S.-D. Friedman and M. Levine for valuable discussions and helpful comments on some of our results.
The author would like to thank the generous support of the FWF Grant I1921 and NKFIH OTKA-113047.

\section{Non-uniformization results for Suslin trees}\label{sec:nonunif}

In this section, we will show that if $2^{\aleph_0}<2^{\aleph_1}$ then for any $\aleph_1$-tree $T$ with no uncountable antichains and any ladder system \textbf C, $\unifc{2}{T}{\mathbf C}$ \emph{fails}.


Let us mention that in \cite{justinmin}, where $T$-uniformizations were first introduced, the only non-uniformization result that can be immediately extracted is \cite[Lemma 3.3]{justinmin}: under $2^{\aleph_0}<2^{\aleph_1}$, any Aronszajn tree $T$ that is club-embeddable into all its subtrees\footnote{I.e., for any subtree $S\subseteq T$ there is a club $D\subs \omg$ and an order-preserving injection $\bigcup_{\alpha\in D}T_\alpha\to \bigcup_{\alpha\in D}S_\alpha$.} and any ladder system $\mathbf C$, there is a monochromatic 2-colouring of $\mathbf C$ without a $T$-uniformization. Such trees can be constructed by forcing as done by Abraham ans Shelah \cite{abrahamiso}, or by using $\diamondsuit^+$ to construct a minimal lexicographically ordered Aronszajn tree \cite{baumorder,stronglysurj} and then invoke \cite[Lemma 2.9]{justinmin}.

\medskip

We start by recalling some definitions and in particular the basis of the result of Devlin and Shelah from point (\ref{it:dsh}).

\begin{thm}\cite{devlin1978weak}
$2^{\aleph_0}<2^{\aleph_1}$ if and only if for any $F:2^{<\omg}\to 2$ there is a $g\in 2^\omg$ so that for any $f\in 2^\omg$ the set $\{\alpha\in \omg: g(\alpha)\neq F(f\uhr \alpha)\}$ is stationary.
\end{thm}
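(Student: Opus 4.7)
This is the classical characterization of the weak diamond due to Devlin and Shelah~\cite{devlin1978weak}; the plan is to treat the two implications separately, and it is convenient throughout to write $\Phi_F(f)(\alpha) = F(f\uhr\alpha)$ for the induced map $\Phi_F : 2^\omg \to 2^\omg$, and to let $g \equiv g'$ mean that $\{\alpha : g(\alpha) \neq g'(\alpha)\}$ is non-stationary.

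For the direction $(\Leftarrow)$, the plan is to assume $2^{\aleph_0} = 2^{\aleph_1}$ and exhibit $F:2^{<\omg}\to 2$ refuting the right-hand side. Using the cardinal equality, enumerate $2^\omg = \{g_\xi : \xi < 2^{\aleph_0}\}$ and construct by transfinite recursion a family of branches $b_\xi \in 2^\omg$ together with countable ``cutoff'' ordinals $\gamma_\xi < \omg$ so that for every $\xi$ the set of nodes $\{b_\xi \uhr \alpha : \alpha \geq \gamma_\xi\}$ is disjoint from all previously reserved nodes $\{b_\zeta \uhr \alpha : \alpha \geq \gamma_\zeta,\ \zeta < \xi\}$. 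Once this is done, set $F(b_\xi \uhr \alpha) = g_\xi(\alpha)$ for $\alpha \geq \gamma_\xi$ and extend $F$ arbitrarily elsewhere; then $b_\xi$ matches $g_\xi$ on the tail $[\gamma_\xi, \omg)$, so no single $g \in 2^\omg$ can serve as a witness. Feasibility of the recursion uses $|2^\alpha| = 2^{\aleph_0}$ for infinite countable $\alpha$, giving enough room to pick fresh branches at every stage $\xi < 2^{\aleph_0}$.

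For the main direction $(\Rightarrow)$, assume $2^{\aleph_0} < 2^{\aleph_1}$ and let $F$ be given; I plan to construct the desired $g$ by a transfinite recursion of length $\omg$. At stage $\alpha$, with $g \uhr \alpha$ already chosen, there are at most $|2^\alpha|\leq 2^{\aleph_0}$ candidate initial segments $s \in 2^\alpha$ and for each one $F(s)$ is already known, so choosing $g(\alpha)$ amounts to selecting one of two possible continuations. The plan is to use an auxiliary enumeration, along a reflective elementary submodel, of $2^{\aleph_0}$ many ``typical'' representatives of branches $f \in 2^\omg$, and to choose $g(\alpha)$ so as to force disagreement with $\Phi_F(f)$ at stationarily many $\alpha$ for each such representative. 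The cardinal assumption $2^{\aleph_0} < 2^{\aleph_1}$ enters through a Fodor/reflection argument at limit stages, which upgrades the control over $2^{\aleph_0}$ representatives to control over all $2^{\aleph_1}$ branches.

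The main obstacle is precisely this last ``upgrade'': the recursion has only $\omg$ steps yet the weak diamond conclusion quantifies over $2^{\aleph_1}$ many $f$, so one must absorb all $f$'s into a controlled family of size $2^{\aleph_0}$. Devlin and Shelah's original resolution encodes the relevant data of each $f$ by countable initial segments $f\uhr\alpha$, of which there are only $2^{\aleph_0}$-many at each countable level, and exploits the strict inequality $2^{\aleph_0} < 2^{\aleph_1}$ to argue that a suitably generic choice of $g$ defeats every $f$; I would follow their argument for the technical details.
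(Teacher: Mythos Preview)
The paper does not prove this theorem; it is quoted from \cite{devlin1978weak} as a black box and then used only via the reformulation in Theorem~\ref{thm:code}. So there is nothing in the paper to compare your argument against.

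On its own merits: your $(\Leftarrow)$ direction is fine. The recursion you describe can in fact be avoided entirely --- fix any injection $\iota:2^{\aleph_0}\to 2^\omega$, let $b_\xi\in 2^{\omg}$ be any branch with $b_\xi\uhr[\omega,\omega\cdot 2)$ coding $\iota(\xi)$, and take $\gamma_\xi=\omega\cdot 2$ uniformly; the tails are then pairwise disjoint and $F$ is well defined.

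Your $(\Rightarrow)$ direction, however, is not a proof but a sketch that explicitly defers the only hard step to the reference. More importantly, the plan you outline --- building $g$ by a length-$\omg$ recursion, handling $2^{\aleph_0}$ many ``representative'' $f$'s, and then invoking an unspecified ``Fodor/reflection'' step to upgrade to all $2^{\aleph_1}$ branches --- is not how the Devlin--Shelah argument works and contains no mechanism for that upgrade. At stage $\alpha$ you only see the $2^{\aleph_0}$ possible $f\uhr\alpha$; nothing in your setup lets a single choice of $g(\alpha)$ defeat the $2^{\aleph_1}$ branches passing through each of them. The actual proof runs in the contrapositive: one assumes the diamond-style statement fails for some $F$, and uses the club-many agreements $g(\alpha)=F(f_g\uhr\alpha)$ together with a coding argument to inject $2^{\omg}$ (modulo club-agreement) into a set of size $2^{\aleph_0}$, yielding $2^{\aleph_0}=2^{\aleph_1}$. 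If you want to supply a proof rather than a citation, that is the shape it must take.
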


The latter statement is often referred to as \emph{the weak diamond}, denoted by $\Phi^2_\omg$, which was discovered in search for the minimal assumptions that imply $\neg\unifc{2}{\omg}{\mathbf C}$. In general, applications of this principle usually involve coding a countable structure into a countable sequence of 0s and 1s to define $F$. In fact, we will use the following form of $\Phi^2_\omg$  where $H(\Theta)$ denotes the collection of sets of hereditary cardinality $<\Theta$. 

\begin{thm}\label{thm:code}\cite[Theorem 3.2]{justinmin}
$2^{\aleph_0}<2^{\aleph_1}$ implies that for any  $F:H(\aleph_1)\to 2$ there is a $g:\omg\to 2$ so that for any $ U\in H(\aleph_2)$ there is a countable elementary submodel $M\prec H(\aleph_2)$ containing $ U$  so that $g(\omg\cap M)\neq F(U^M)$.\footnote{$ U^M$ is the image of $U$ under the transitive collapse of $M$. Typically, this will coincide with $ U\cap M$. Furthermore, we can always assume that $M$ contains whatever countably many parameters we require form $H(\aleph_2)$, not just $U$.}

\end{thm}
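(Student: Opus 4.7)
The plan is to reduce the elementary-submodel version to the combinatorial weak diamond $\Phi^2_\omg$ stated just above, by coding countable transitive sets as binary sequences of length $\alpha<\omg$.

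First, I fix (in $V$) a bijection $p:\omg\times\omg\to\omg$ whose restriction to $\alpha\times\alpha$ maps onto $\alpha$ for $\alpha$ in some club $C_0\subs\omg$. Given $F:H(\aleph_1)\to 2$, I define $\tilde F:2^{<\omg}\to 2$ as follows: for $s:\alpha\to 2$, form the binary relation $E_s$ on $\alpha$ by declaring $\gamma\, E_s\, \delta$ iff $p(\gamma,\delta)<\alpha$ and $s(p(\gamma,\delta))=1$. If $(\alpha,E_s)$ is well-founded and extensional with Mostowski collapse $\pi_s$, and $\pi_s(0)\in H(\aleph_1)$, set $\tilde F(s)=F(\pi_s(0))$; otherwise $\tilde F(s)=0$. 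Invoking $\Phi^2_\omg$ on $\tilde F$ yields $g:\omg\to 2$ such that $\{\alpha<\omg : g(\alpha)\neq \tilde F(f\uhr\alpha)\}$ is stationary for every $f\in 2^\omg$.

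To verify that this $g$ witnesses the theorem, given $U\in H(\aleph_2)$, I build a $\subseteq$-continuous chain $(M_\xi)_{\xi<\omg}$ of countable elementary submodels $M_\xi\prec H(\aleph_2)$ with $\{U,F,\tilde F,g,p\}\subs M_0$, set $N=\bigcup_{\xi<\omg}M_\xi$, and pick a bijection $v:\omg\to N$ with $v(0)=U$. A standard closing-off argument yields a club $C_1$ on which $v\uhr\xi:\xi\to M_\xi$ is a bijection and $\omg\cap M_\xi=\xi$. Define $f:\omg\to 2$ by $f(p(\gamma,\delta))=1$ iff $v(\gamma)\in v(\delta)$. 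For $\xi\in C_0\cap C_1$, the relation $E_{f\uhr\xi}$ on $\xi$ is by construction isomorphic via $v\uhr\xi$ to $(M_\xi,\in)$, and composition with the Mostowski collapse $\sigma_{M_\xi}$ gives an isomorphism $(\xi,E_{f\uhr\xi})\to(\overline{M_\xi},\in)$ sending $0$ to $\sigma_{M_\xi}(U)=U^{M_\xi}$. Thus $\tilde F(f\uhr\xi)=F(U^{M_\xi})$ throughout $C_0\cap C_1$, and the stationarity property of $g$ applied to this $f$ gives some $\xi\in C_0\cap C_1$ with $g(\xi)\neq F(U^{M_\xi})$; taking $M:=M_\xi$ completes the argument.

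The chief difficulty is the design of $\tilde F$ so that a single function $f\in 2^\omg$ simultaneously encodes, for club-many $\xi$, the structure $(M_\xi,\in)$ with $U$ distinguished; once a pairing-function-based coding and the enumeration $v$ with $v(0)=U$ are fixed, the required coherence follows from continuity of the chain and elementarity. Any additional parameters one wishes to include in $M$ can be dropped into $M_0$ at the start of the construction, so the final clause of the statement comes for free.
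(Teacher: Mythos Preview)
The paper does not actually prove this theorem; it is simply quoted from \cite[Theorem 3.2]{justinmin} and then used as a black box. So there is no paper proof to compare against.

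Your argument is correct and is exactly the standard reduction one would expect: code countable $\in$-structures by binary sequences via a pairing function, apply the Devlin--Shelah weak diamond $\Phi^2_\omg$ to the resulting $\tilde F$, and then, given $U$, manufacture a single $f\in 2^\omg$ whose initial segments decode to the transitive collapses of a continuous tower of elementary submodels with $U$ marked as the $0$th element. The closing-off step (finding a club on which $v[\xi]=M_\xi$ and $M_\xi\cap\omg=\xi$) is the routine back-and-forth between two continuous increasing $\omg$-chains with the same union, together with the fixed points of the normal map $\xi\mapsto M_\xi\cap\omg$. One cosmetic remark: the clause ``$\pi_s(0)\in H(\aleph_1)$'' in your definition of $\tilde F$ is automatically satisfied whenever the collapse exists, since the Mostowski collapse of a countable well-founded extensional structure is a countable transitive set; so the condition is redundant, though harmless.
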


The function $g$ from the theorem will be referred to as an \emph{oracle} for $F$.

\medskip

By Moore's result from point (\ref{it:moore}), $\Phi^2_\omg$ by itself is not strong enough to  show the existence of colourings without a $T$-uniformization for an arbitrary  Aronszajn tree $T$. However, $\Phi^2_\omg$ does have consequences for Suslin trees.



\begin{thm}\label{thm:nonunifs}
 $\Phi^2_\omg$ implies  $\neg \unifc{2}{T}{\textbf C}$ for any $\aleph_1$-tree $T$ without uncountable antichains and any ladder system $\textbf C$. 
\end{thm}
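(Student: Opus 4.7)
The plan is to invoke the weak diamond principle through Theorem \ref{thm:code}, following the general template of Devlin and Shelah but adding one ingredient that exploits the hypothesis that $T$ has no uncountable antichains.

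I would first define a function $F\colon H(\aleph_1)\to 2$, using $T$ and $\mathbf C$ as external parameters (permitted since $F$ can be any function on $H(\aleph_1)$). Given $x\in H(\aleph_1)$ that codes a triple $(T',S',\varphi')$ with $T'$ a countable tree of some limit height $\alpha$, $S'\subseteq T'$ downward closed, and $\varphi'\colon S'\to 2$, the function reads off $\alpha$ from $x$, retrieves the external data $T_\alpha$ and $C_\alpha$, and sets $F(x)=v$ whenever every $t\in T_\alpha$ with $t^\downarrow\subseteq S'$ satisfies $\varphi'(t\uhr\xi)=v$ for almost all $\xi\in C_\alpha$ and at least one such $t$ exists; if no unique value $v$ emerges, set $F(x)=0$ by default. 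Let $g\colon\omg\to 2$ be the oracle for $F$ given by Theorem \ref{thm:code}, and define the monochromatic colouring $\mathbf f$ of $\mathbf C$ by $f_\alpha\equiv g(\alpha)$.

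Now assume for contradiction that some $\varphi\colon S\to 2$ is a $T$-uniformization of $\mathbf f$. The structural key is that the boundary set
\[B:=\{t\in T\setminus S : t^\downarrow\subseteq S\}\]
is an antichain in $T$: if $t_1<t_2$ both lay in $B$, then $t_1\in t_2^\downarrow\subseteq S$ would contradict $t_1\notin S$. Since $T$ has no uncountable antichains, $B$ is countable, so fix $\gamma<\omg$ strictly above every height appearing in $B$. Then for each limit $\alpha>\gamma$ one has the exact equality $\{t\in T_\alpha : t^\downarrow\subseteq S\}=S_\alpha$.

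I would then apply Theorem \ref{thm:code} to $U:=(T,S,\varphi,\mathbf C,B)$, obtaining a countable $M\prec H(\aleph_2)$ with $U\in M$, $g(\alpha_M)\neq F(U^M)$, $\alpha_M:=\omg\cap M$ a limit ordinal, and $\alpha_M\subseteq M$. Countability of $B$ forces $B\subseteq M$, hence $\gamma\in M$ and $\alpha_M>\gamma$, so $\{t\in T_{\alpha_M} : t^\downarrow\subseteq S\}=S_{\alpha_M}$. Since $T$ has countable levels and $\alpha_M\subseteq M$, the triple $(T_{<\alpha_M},S_{<\alpha_M},\varphi\uhr S_{<\alpha_M})$ lies entirely in $M$ and is exactly what $U^M$ encodes. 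When $F(U^M)$ is computed, the external loop over $t\in T_{\alpha_M}$ with $t^\downarrow\subseteq S_{<\alpha_M}$ then ranges precisely over $S_{\alpha_M}$, which is nonempty because $S$ is pruned, and each such $t$ produces the common value $g(\alpha_M)$ because $\varphi$ uniformizes $\mathbf f$. Hence $F(U^M)=g(\alpha_M)$, contradicting the choice of $M$.

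The main obstacle I anticipate is the appearance of `phantom' cofinal branches in $S_{<\alpha_M}$ that do not extend to elements of $S$, which could otherwise yield conflicting stabilization values and spoil the computation of $F$. The antichain count on $B$ is what rules these out cofinally in $\omg$, and this is the single place where the no-uncountable-antichain hypothesis on $T$ is used.
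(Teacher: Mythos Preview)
Your proof is correct and genuinely different from the paper's argument. Both proofs use Theorem~\ref{thm:code} and define the bad colouring as $f_\alpha\equiv g(\alpha)$, but they exploit the no-uncountable-antichain hypothesis in different places. You observe up front that the boundary $B=\{t\in T\setminus S:t^\downarrow\subseteq S\}$ is an antichain, hence countable, and so above some $\gamma$ the phantom branches vanish and $\{t\in T_\alpha:t^\downarrow\subseteq S_{<\alpha}\}=S_\alpha$; this lets you get away with a very simple $F$ that just reads off the common stabilization value. The paper instead defines a two-case $F$ (recording a node $s$ and colour $i$ such that $\varphi$ hits $i$ cofinally along every branch above $s$), argues that Case~1 would kill $S$ above $s$, and then in Case~2 applies Fact~\ref{fact:suslin} to reflect a phantom witness $t_0\in T_\alpha\setminus S$ down into $t_0^\downarrow\cap M\subseteq S$, producing the contradiction there. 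Your route is shorter and structurally cleaner; the paper's route avoids the global bound $\gamma$ and instead uses the Suslin reflection trick locally at the single model $M$, a technique that recurs elsewhere in the paper.
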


This result generalizes Devlin and Shelah's theorem from point (\ref{it:dsh}), but applies to Suslin trees as well. So, we see that Moore's model cannot contain Suslin trees, and the current author's result from \cite{soukup2018model} is optimal.  Let us also mention that $\Phi^2_\omg$ easily implies that for any ladder system $\textbf C$ and Aronszajn tree $T$ there is a monochromatic 2-colouring of $\textbf C$ without a \emph{full} $T$-uniformization i.e., a $T$-uniformization $\varphi$ with $\dom \varphi=T$.

\begin{tproof}
 Fix an $\aleph_1$-tree $T$ without uncountable antichains and ladder system $\textbf C$; we can assume that $T\subs \oo^{<\omg}$ and so $T_{<\alpha}\in H(\aleph_1)$ for all $\alpha<\omg$.
 
 We need to find a monochromatic 2-colouring of $\mathbf C$ with no $T$-uniformization, and  to do so, we first define a function $F:H(\aleph_1)\to 2$. In fact, we will only specify the values of $F$ on maps $\varphi:S\to 2$ in $H(\aleph_1)$ where  $S$ is a subtree of $T_{<\alpha}$ for some limit $\alpha<\omg$. On all other elements of $H(\aleph_1)$, we let $F$ be constant 1.  
   
  Two cases are distinguished.
   
   \textbf{Case 1.} Suppose that there is an $s\in S$ and $i<2$, so that for any $t\in T_\alpha$ above $s$ such that $t^\downarrow\subs S$, the set $$\{\xi\in C_\alpha:\varphi(t\uhr \xi)=i\}$$ is cofinal in $\alpha$.  Pick such an $s,i$ canonically (using some well order of $H(\aleph_1)$), and define $F(\varphi)=i$. 
   
   
   \textbf{Case 2.} If there is no such pair $s,i$ then we let $F(\varphi)=1$.
   
   Now, let $g\in 2^\omg$ be the oracle for $F$ given by $\Phi^2_\omg$. We claim that there is no $T$-uniformization for the monochromatic coloring $\textbf f$ that is defined by taking $f_\alpha$ to be constant $g(\alpha)$.
   
   Otherwise, let $S\subs T$ be a subtree and suppose $\varphi:S\to 2$ is a $T$-uniformization of $\textbf f$. Now, we reach a contradiction: find a countable elementary submodel $M\prec H(\aleph_2)$ containing $T,\varphi$ and $g$ so that $g(\alpha)\neq F(\varphi\cap M)$. Let $\alpha=M\cap \omg$.
   
   
   \begin{clm}
    We used the second case definition for $F(\varphi\cap M)$.
   \end{clm}
\begin{cproof}

   Otherwise, we used the first case definition of $F$ with some node $s\in S_{<\alpha}$ and $i<2$. But then for any  $t\in T_\alpha$ above $s$ so that $t^\downarrow\subs S$, $\varphi(t\uhr \xi)=i$ for infinitely many $\xi\in C_\alpha$. However, $g(\alpha)=1-i$ and so for every $t\in S_\alpha$ such that $t^\downarrow\subset S$, $\varphi(t\uhr \xi)=1-i$ holds for almost all $\xi\in C_\alpha$. In turn, there cannot be any element of $S\cap T_\alpha$ above $s$, which contradicts that $S$ was pruned.
 
\end{cproof}

 In particular, we deduce that $F(\varphi\cap M)=1$ and so $g(\alpha)=0$.  We know that Case 1 failed with any $s\in S_{<\alpha}$ and $i=0$, so we can find $t_0\in T_\alpha$ so that $t_0^\downarrow\subs S_{<\alpha}$ and $\varphi(t_0\uhr \xi)=1$ for almost all $\xi\in C_\alpha$. 
 Consider the set $$X=\{t\in T:\htt(t)=\beta, g(\beta)=0, t^\downarrow\subs S \textmd{ and } \varphi(t\uhr \xi)=1 \textmd{ for almost all }\xi\in C_\beta\}.$$
 
 Note that $X\in M$ and $t_0\in X$ so by Fact \ref{fact:suslin}, we can find some $t\in t_0^\downarrow\cap X$. However, any $t\in t_0^\downarrow$ is also in $S$, so $\varphi(t\uhr \xi)=g(\beta)=0$ for almost all $\xi\in C_\beta$ (where $\beta=\htt(t)$). This contradicts that $t\in X$ (i.e., that $\varphi(t\uhr \xi)=1 \textmd{ for almost all }\xi\in C_\beta\}$), which in turn finishes the proof of the theorem.

\end{tproof}
 In Section \ref{sec:forceunif}, we show that even $\unif{\oo}{\omg}{\textbf C}$ for all $\textbf C$ is consistent with the existence of Suslin trees (but necessarily, CH will fail in such a model).  However, we are not sure if it suffices to assume in the above result that $T$ has no stationary antichains.

\section{Non-uniformization results for Aronszajn trees}\label{sec:nonunif2}

Our next goal is to find an assumption that gives ladder system colourings which have no $T$-uniformization for any $\aleph_1$-tree $T$ that is not necessarily Suslin. To prove such a result, we employ the technique of parametrized weak diamonds \cite{DHM,osvaldo,minami2008suslin}.  Let $\mc M$ stand for the ideal of meager sets in $2^\oo$.
  
  \begin{dfn}
   Let $\diamondsuit^{\omg}(\textmd{non}(\mc M))$ denote the following statement: if $X$ is an $\omg$ set of ordinals and $F:\bigcup_{\alpha<\omg} \alpha^\alpha\to \mc M$ so that $F\uhr \alpha^\alpha\in L(\mathbb R)[X]$ for all $\alpha<\omg$, then there is a $g:\omg\to 2^\oo$ so that for all $f:\omg\to \omg$ the set $$\{\alpha<\omg:f\uhr \alpha\in \alpha^\alpha \textmd{ and } g(\alpha)\notin F(f\uhr \alpha)\}$$ is stationary.
  \end{dfn}
  
  Recall that $L(\mathbb R)$ is the class of sets constructible from $\mb R$ (in the sense of G\"odel) and $L(\mathbb R)[X]$ is the minimal model extending $L(\mathbb R)$ which contains $X$. See \cite[Chapter 13]{jech} for more details on constructibility. So, compared to  $\Phi^2_\omg$, we changed the range of $F$ from 2 to $\mc M$, required $F$ to be nicely definable and the oracle $g$ will stationarily often avoid the 'bad' set given by $F$.


$\diamondsuit^{\omg}(\textmd{non}(\mc M))$ implies the existence of Suslin trees \cite[Theorem 3.1]{pardiamond} and follows from  the classical diamond principle $\diamondsuit$.\footnote{Recall that $\diamondsuit$ says that there is a sequence  $W=(W_\alpha)_{\alpha<\omg}$ so that for any $X\subs \omg$, the set  $\{\alpha<\omg:X\cap \alpha= W_\alpha\}$ is stationary.} However, $\diamondsuit^{\omg}(\textmd{non}(\mc M))$ has the great advantage that it can hold in various models with $2^{\aleph_0}=2^{\aleph_1}$ (see \cite{osvaldo} for more details). 

%


\begin{thm}\label{thm:weakdnonunif}
$\diamondsuit^{\omg}(\textmd{non}(\mc M))$ implies $\neg \unif{2}{T}{\textbf C}$ for any $\aleph_1$-tree $T$ and ladder system $\textbf C$.

%
\end{thm}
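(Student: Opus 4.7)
The plan is to mimic the Suslin-tree proof of Theorem \ref{thm:nonunifs} but with the parametrized weak diamond supplying a real (rather than a bit) at each level, exploiting the fact that the obstruction posed at level $\alpha$ by a candidate $T$-uniformization is a countable, hence meager, subset of $2^\oo$. First, fix an $\aleph_1$-tree $T\subs \oo^{<\omg}$ and a ladder system $\mathbf C$. Let $X$ be an $\omg$-set coding $T$ and $\mathbf C$, and fix, uniformly in limit $\alpha$, a canonical bijection between $\alpha$ and $T_{<\alpha}$. Define $F:\bigcup_{\alpha<\omg}\alpha^\alpha\to\mc M$ so that whenever $h\in \alpha^\alpha$ codes a pair $(S,\varphi)$ with $S\subs T_{<\alpha}$ a subtree and $\varphi:S\to 2$, we set
\[
F(h) \;=\; \{\, r\in 2^\oo : \exists t\in T_\alpha \text{ with } t^\downarrow \subs S \text{ and } (\varphi(t\uhr \xi))_{\xi\in C_\alpha} =^* r \,\},
\]
where $r\in 2^\oo$ is identified with a map $C_\alpha\to 2$ via the increasing enumeration of $C_\alpha$; for all other $h$, set $F(h)=\emptyset$. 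Since $T_\alpha$ is countable, $F(h)$ is at most countable, hence meager, and clearly $F\uhr \alpha^\alpha$ is definable from $T_{\leq \alpha}$ and $\mathbf C\uhr \alpha$, so lies in $L(\mb R)[X]$.

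Apply $\diamondsuit^{\omg}(\textmd{non}(\mc M))$ to obtain an oracle $g:\omg\to 2^\oo$ for $F$. Define the 2-colouring $\mathbf f$ by $f_\alpha(\xi)=g(\alpha)(k)$, where $\xi$ is the $k$-th element of $C_\alpha$. Suppose towards a contradiction that $\varphi:S\to 2$ is a $T$-uniformization of $\mathbf f$ on a subtree $S\subs T$. There is a single function $f_{S,\varphi}:\omg\to\omg$ whose restriction to each limit $\alpha$ codes $(S\cap T_{<\alpha},\varphi\uhr S_{<\alpha})$. By the oracle property, the set of limit $\alpha$ with $g(\alpha)\notin F(f_{S,\varphi}\uhr \alpha)$ is stationary; pick any such $\alpha$. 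Since $S$ is pruned, $S_\alpha\neq \empt$, so pick $t\in S_\alpha$. Then $t^\downarrow\subs S$ and, because $\varphi$ uniformizes $\mathbf f$, the sequence $(\varphi(t\uhr \xi))_{\xi\in C_\alpha}$ is eventually equal to $f_\alpha=g(\alpha)$, so $g(\alpha)\in F(f_{S,\varphi}\uhr \alpha)$, a contradiction.

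The main obstacle is purely bookkeeping: arranging a coding of subtree--colouring pairs by functions in $\alpha^\alpha$ that is uniformly definable in $L(\mb R)[X]$ and confirming that $F$ has range in $\mc M$. Once this is set up, the combinatorial content is transparent, since countability of $T_\alpha$ is exactly what makes $F(h)$ meager. Observe that the argument bypasses Fact \ref{fact:suslin}(1), which was central in Theorem \ref{thm:nonunifs}: rather than locate one bad node below a given $t_0$, we simultaneously exhaust all countably many candidate $t\in T_\alpha$ through a single meager set, which is why the strengthening from $\Phi^2_\omg$ to $\diamondsuit^\omg(\textmd{non}(\mc M))$ is the natural cost of dropping the no-antichain hypothesis.
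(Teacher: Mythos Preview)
Your proposal is correct and follows essentially the same approach as the paper's own proof: define $F$ so that $F(\varphi)$ collects all reals eventually equal to some branch-sequence $(\varphi(t\uhr\xi))_{\xi\in C_\alpha}$ with $t\in T_\alpha$, observe this set is meager, apply the oracle $g$ to define the colouring, and derive a contradiction at any $\alpha$ where $g(\alpha)\notin F(\varphi\uhr S_{<\alpha})$. The only cosmetic difference is that you justify meagerness of $F(h)$ via countability (each $=^*$-class is countable and $T_\alpha$ is countable), whereas the paper writes out the explicit decomposition into nowhere dense sets $\bigcap_{k\geq n}\{\psi:\varphi(t\uhr\alpha_k)=\psi(k)\}$; both arguments are valid and amount to the same thing.
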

In turn,  $\diamondsuit^{\omg}(\textmd{non}(\mc M))$  is inconsistent with quite weak forms of uniformization on trees, but we will see in later sections that, quite surprisingly, we cannot always produce a \emph{monochromatic} 2-colouring without a $T$-uniformization from diamond-like assumptions. Moreover, T. Yorioka recently proved\footnote{Personal communication.} that the analogous principle $\diamondsuit^{\omg}(\textmd{cov}(\mc N))$ is consistent with $\unif{2}{\omg}{\textbf C}$ (modulo some large cardinal assumptions).


\begin{tproof}
  Fix $T$ and $\textbf{C}$, and let $(\alpha_k)_{k\in \oo}$ denote the increasing enumeration of $C_\alpha$. We will first define $F:\bigcup_{\alpha<\omg} \alpha^\alpha\to \mc M$ using $T$ as a parameter. By a standard coding argument (much like in Theorem \ref{thm:code}), we can associate (in a canonical way) to any  $\varphi:S\to 2$ so that $S\subseteq T_{<\alpha}$ is a subtree and $\alpha\in \lim\omg$ some code $f_\varphi:{\beta_\varphi}\to{\beta_\varphi}$. Moreover, for club many $\alpha\in \omg$ the code $f_\varphi$ of any $\varphi$ of height $\alpha$ will be defined on the same $\alpha$ (i.e., $\beta_\varphi=\alpha$).

  Now, suppose $f\in \alpha^\alpha$ codes some $\varphi:S\to 2$ so that $S\subseteq T_{<\alpha}$ is a subtree and $\alpha\in \lim\omg$. Then we let $$F(\varphi)=\{\psi\in 2^{\oo}:(\exists t\in T_\alpha)t^\downarrow\subs S \textmd{ and }\varphi(t\uhr \alpha_k)=\psi(k)\textmd{ for almost all }k<\oo\}$$
 $$=\bigcup_{t\in T_\alpha,t^\downarrow\subs S}\;\bigcup_{n\in \oo}\;\bigcap_{k\geq n}\{\psi\in 2^{\oo}:\varphi(t\uhr \alpha_k)=\psi(k)\}. $$
 
 Otherwise, we let $F(f)=\emptyset$; it is easy to see that $F\uhr \alpha^\alpha\in L(\mb R)[T]$.

 \begin{clm}
  $F(\varphi)$  is a meager subset of $2^{\oo}$.
 \end{clm}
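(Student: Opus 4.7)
The plan is to observe that $F(\varphi)$ is in fact a \emph{countable} subset of $2^{\oo}$, which is certainly meager in the Cantor space (a perfect Polish space with no isolated points). The verification is essentially a bookkeeping exercise once the defining formula is parsed correctly.

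First, since $T$ is an $\aleph_1$-tree, the level $T_\alpha$ is countable, so there are only countably many $t\in T_\alpha$ with $t^\downarrow\subs S$ contributing to the outer union. Fix one such $t$. For each $n\in\oo$, the set
\[
A_{t,n}=\bigcap_{k\geq n}\{\psi\in 2^{\oo}:\varphi(t\uhr \alpha_k)=\psi(k)\}
\]
prescribes every coordinate $\psi(k)$ for $k\geq n$ to a specified value, namely $\varphi(t\uhr\alpha_k)$. Hence $A_{t,n}$ has exactly $2^n$ elements, so it is finite. Taking the countable union over $n$, we obtain the set of all $\psi\in 2^\oo$ that eventually coincide with the fixed sequence $(\varphi(t\uhr\alpha_k))_{k<\oo}$, which is countable.

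Therefore $F(\varphi)$ is a countable union (over $t\in T_\alpha$ with $t^\downarrow\subs S$) of countable sets, hence countable and so meager in $2^\oo$. No genuine obstacle is anticipated; the only point worth emphasizing is that countability of $T_\alpha$ is exactly where the $\aleph_1$-tree assumption on $T$ enters. This countability is also what forces the use of the parametrized weak diamond $\diamondsuit^\omg(\textmd{non}(\mc M))$ rather than the plain $\Phi^2_\omg$: a countable target is insufficient to diagonalize against an arbitrary tree as in the Suslin case, but a meager target from each level is enough.
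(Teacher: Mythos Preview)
Your proof is correct and follows the same decomposition as the paper: $F(\varphi)=\bigcup_{t}\bigcup_{n}A_{t,n}$ with countably many pieces. The only difference is that the paper simply observes each $A_{t,n}$ is nowhere dense (a closed set prescribing infinitely many coordinates has empty interior), whereas you note the sharper fact that each $A_{t,n}$ is finite and hence $F(\varphi)$ is actually countable. Both arguments are immediate; yours gives a stronger conclusion, while the paper's phrasing stays closer to the meager-ideal language of $\diamondsuit^{\omg}(\textmd{non}(\mc M))$.
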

 \begin{cproof}
  It suffice to note that the set $$\bigcap_{k\geq n}\{\psi\in 2^{\oo}:\varphi(t\uhr \alpha_k)=\psi(k)\}$$ is nowhere dense for any choice of $t\in T_\alpha$ and $n<\oo$.
 \end{cproof}

  Now, suppose $g:\omg\to 2^\oo$ is the oracle for $F$ witnessing $\diamondsuit^{\omg}(\textmd{non}(\mc M))$, and define the colouring $f_\alpha:C_\alpha\to 2$  by $f_\alpha(\alpha_k)=g(\alpha)(k)$ for $k<\oo$. We claim that this (not necessarily monochromatic) 2-colouring $\textbf f=(f_\alpha)_{\alpha<\omg}$ has no $T$-uniformization.
 
 Otherwise, suppose that $\varphi:S\to 2$ is a $T$-uniformization. Now, we can find some limit $\alpha<\omg$ so that   $g(\alpha)\notin F(\varphi \uhr S_{<\alpha})$. This means that for any $t\in S_\alpha\subseteq T_\alpha$, $$\varphi (t\uhr \alpha_k) \neq g(\alpha)(k)=f_\alpha(\alpha_k)$$ for infinitely many $k$, a contradiction.

 \end{tproof}

Next, one would like to find a \emph{monochromatic 2-colouring without a $T$-uniformization}, much like in the case of Suslin trees. We will use $\diamondsuit$ to do this but if the interested reader looks closely at the preceding proofs for an optimal assumption then he or she will be lead to a fairly technical common strengthening of  $\Phi^2_\omg$ and $\diamondsuit^{\omg}(\textmd{non}(\mc M))$ that suffices for the same argument to work;\footnote{Let $\diamondsuit^{\omg}(2\times \textmd{non}(\mc M))$ stand for the following: if $X$ is an $\omg$ set of ordinals and $F:\bigcup_{\alpha<\omg} \alpha^\alpha\to 2\times \mc M$ so that $F\uhr \alpha^\alpha\in L(\mathbb R)[X]$ for all $\alpha<\omg$, then there is a $g:\omg\to 2\times 2^\oo$ so that for all $f:\omg\to \omg$ the set $\{\alpha<\omg:f\uhr \alpha\in \alpha^\alpha \textmd{ and }$  $F(f\uhr \alpha)=(i,\mc C)$ implies $g(\alpha)=(1-i,c)$ where $c\notin \mc C\}$ is stationary.  $\diamondsuit^{\omg}(2\times \textmd{non}(\mc M)$ follows from $\diamondsuit$ and, most likely, $\diamondsuit^{\omg}(2\times \textmd{non}(\mc M))$ is also consistent with the negation of CH.} however, we chose to present the simpler $\diamondsuit$-based argument here as we see no further application or real advantage of the other, more technical approach at this point.

 
 \begin{thm}
   $\diamondsuit$ implies that for any $\aleph_1$-tree $T$, there is a ladder system $\mathbf C$ so that $\neg \unifc{2}{T}{\textbf C}$.
 \end{thm}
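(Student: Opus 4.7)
My plan is to adapt the $\Phi^2_\omg$-based argument behind Theorem \ref{thm:nonunifs} to arbitrary $\aleph_1$-trees $T$, using the full strength of $\diamondsuit$ and the crucial freedom to tailor the ladder system $\mathbf C$ to $T$. I would build $\mathbf C=(C_\alpha)$ and the monochromatic colouring $\mathbf f=(f_\alpha)$, $f_\alpha\equiv i_\alpha$, simultaneously by recursion on $\alpha\in\lim\omg$; the $\diamondsuit$-sequence predicts at each stage the restriction of a candidate $T$-uniformization to $T_{<\alpha}$, and I choose $C_\alpha,i_\alpha$ at this stage to spoil that candidate.

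Concretely, fix a level-preserving bijection $T\to\omg$, code pairs $(S,\varphi)$ (with $S\subseteq T$ a subtree and $\varphi:S\to 2$) as subsets of $\omg$, and let $\diamondsuit$ supply a guess which decodes at stationarily many $\alpha$ into some $(S',\varphi')$ with $S'\subseteq T_{<\alpha}$ pruned and $\varphi':S'\to 2$. Consider $A_\alpha=\{t\in T_\alpha:t^\downarrow\subseteq S'\}$, pick a canonical $t^*\in A_\alpha$, and design $C_\alpha,i_\alpha$ so that $\varphi'(t^*\uhr\xi)\ne i_\alpha$ for infinitely many $\xi\in C_\alpha$: if $\varphi'(t^*\uhr\xi)$ is eventually some constant $j\in\{0,1\}$ as $\xi\to\alpha$, set $i_\alpha=1-j$ and take any cofinal $\omega$-sequence $C_\alpha$; otherwise set $i_\alpha=0$ and take $C_\alpha$ to be a cofinal $\omega$-sequence contained in $\{\xi<\alpha:\varphi'(t^*\uhr\xi)=1\}$.

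For the verification, suppose $\varphi:S\to 2$ is a $T$-uniformization of $\mathbf f$. By $\diamondsuit$ there are stationarily many $\alpha$ at which the guess decodes to $(S\cap T_{<\alpha},\varphi\uhr S_{<\alpha})$, so at these stages the constructed $t^*$ satisfies $(t^*)^\downarrow\subseteq S$ with the promised disagreement of $\varphi$ with $f_\alpha\equiv i_\alpha$ along $C_\alpha$. The goal is then to promote $t^*$ into a genuine counterexample via the ``$X$-trick'' from the proof of Theorem \ref{thm:nonunifs}: working inside a countable $M\prec H(\aleph_2)$ with $M\cap\omg=\alpha$, define a set $X\subseteq T$ of nodes of limit height whose branch disagrees with $\mathbf f$ almost everywhere in the associated $C_\beta$, observe that $t^*\in X$ by construction, and descend to a smaller $t\in X\cap M$ that must also lie in $S$, yielding a contradiction with the uniformization property at $\htt(t)$.

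The principal obstacle is this descent step: Fact \ref{fact:suslin}(1), which Theorem \ref{thm:nonunifs} relied on, requires $T$ to have no uncountable antichains and is unavailable here. To get around it I would first arrange that $\mathbf f$ also defeats every $\omg$-uniformization --- possible because $\diamondsuit\Rightarrow\Phi^2_\omg$ and item (II) of the Introduction applies --- so any putative counterexample subtree $S$ is forced to be Aronszajn and Fact \ref{fact:suslin}(2) becomes available. The incomparable witness $t\in X\cap M$ produced by that fact is not automatically in $S$, so both $X$ and the ladder $\mathbf C$ must be designed with extra care (using precisely the per-level freedom in choosing $\mathbf C$, which is unavailable in Theorem \ref{thm:nonunifs}) to ensure the incomparable witness still clashes with the uniformization condition at level $\htt(t)$. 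This substitution of Fact \ref{fact:suslin}(1) by Fact \ref{fact:suslin}(2), combined with the adversarial tailoring of $\mathbf C$, is the new combinatorial ingredient needed beyond Theorem \ref{thm:nonunifs}.
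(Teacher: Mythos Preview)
Your proposal has a genuine gap at exactly the point you flag. Spoiling a single canonical branch $t^*$ at each guessed level is not enough: the subtree $S$ may simply omit $t^*$ from $S_\alpha$ while remaining pruned. Your attempted repair via Fact~\ref{fact:suslin}(2) produces an incomparable $t\in X\cap M$, but you yourself note there is no reason $t\in S$, and the promise of ``extra care'' in designing $X$ and $\mathbf C$ is not a proof. There is no evident way to force the incomparable witness into $S$, and without that the argument does not close.

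The paper's argument avoids this entirely by a different mechanism, and in particular uses neither part of Fact~\ref{fact:suslin}. It runs a Case~1/Case~2 dichotomy on the guessed $\varphi\uhr S_{<\alpha}$. In Case~1 there exist $s\in S_{<\alpha}$ and $i<2$ such that \emph{every} $t\in T_\alpha$ above $s$ with $t^\downarrow\subseteq S_{<\alpha}$ has $\{\xi<\alpha:\varphi(t\uhr\xi)=i\}$ cofinal; since there are only countably many such $t$, one diagonalizes to pick a single type-$\omega$ cofinal $E_\varphi\subseteq\alpha$ meeting all these sets infinitely often, sets $C_\alpha=E_\varphi$ and $f_\alpha\equiv 1-i$, and thereby spoils \emph{all} branches above $s$ simultaneously (so $S_\alpha$ would be empty above $s$, contradicting prunedness). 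In Case~2 the negation of Case~1 says that for every $s$ and every $i$ some branch above $s$ eventually avoids value $i$; the oracle records a default value, and an elementarity argument (reflecting the oracle's value to cofinally many $\alpha'<\alpha$ and using that $\varphi$ uniformizes $\mathbf f$ there) shows that along every branch $\varphi$ takes a fixed value cofinally often---which is precisely Case~1, a contradiction. The freedom to choose $C_\alpha$ is spent on making Case~1 kill all branches at once, not on tailoring to a single $t^*$; this is the idea your proposal is missing.
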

 
 Note that, given a tree $T$, we are choosing both the ladder system and the monochromatic 2-colouring without $T$-uniformization; we will see later that this is best possible from diamond-type assumptions.

We use the following equivalent form of $\diamondsuit$ (see \cite{pardiamond}): for any $F:H(\aleph_1)\to 2^\oo$ there is a $g:\omg\to 2^\oo$ so that   for any $U\in H(\aleph_2)$ there is a countable elementary $M\prec H(\aleph_2)$ containing $U$ so that $F(U^M)=g(M\cap \omg)$.

 \begin{tproof}
 We fix an $\aleph_1$-tree $T$ and define a map $F:H(\aleph_1)\to 2^\oo$ first as follows. Take bijections $e_\alpha:\oo\to \alpha$ for all $\alpha<\omg$.  Given $\varphi:S\to 2$ for some pruned $S\subseteq T_{<\alpha}$, we distinguish two cases.
 
    \textbf{Case 1.}  Suppose that there is an $s\in S$ and $i<2$ so that for any $t\in T_\alpha$ above $s$ such that $t^\downarrow\subs S$, the set $$I_t=\{\xi<\alpha:\varphi(t\uhr \xi)=i\}$$ is cofinal in $\alpha$.  Pick such an $s,i$ canonically (using some well order of $H(\aleph_1)$), and define $F(\varphi)(0)=1-i$.    
    
    Now, let $B_s=\{t\in T_\alpha:t^\downarrow\subs S, s\leq t\}$. We choose the values $F(\varphi)(k)$ for $k\geq 1$ so that 
    \begin{enumerate}
        \item $E_{\varphi}=\{e_\alpha(k):k\geq 1, F(\varphi)(k)=1\}$ is a cofinal, type $\oo$ subset of $\alpha$, and
        \item for any $t\in B_s$, there are infinitely many $\xi\in E_\varphi$ such that $\varphi(t\uhr \xi)=i$.
    \end{enumerate}
   
   In plain words, we use $F(\varphi)(0)$ to record a colour which contradicts $\varphi$ on each branch $t^\downarrow$ restricted to the levels in $E_\varphi$ (infinitely often). The set  $E_\varphi$ is recorded by the restriction $F(\varphi)\uhr[1,\omega)$.
      \medskip
      
  \textbf{Case 2.} If Case 1 fails, we let $F(\varphi)$ be the constant 1 function.
    \medskip
    
  On any other element of $H(\aleph_1)$, we define $F$ to be constant 1 again.
  
  \medskip
  
  Now, let $g:\omg\to 2^\oo$ be the oracle for $F$ that witnesses    $\diamondsuit$. We define the ladder system $\mathbf C$: if  $\{e_\alpha(k):k\geq 1, g(\alpha)(k)=1\}$ has type $\omega$ and is cofinal in $\alpha$ then we let this set be $C_\alpha$. Otherwise, $C_\alpha$ is any type $\oo$, cofinal subset of $\alpha$. To colour $\mathbf C$, we simply let $f_\alpha:C_\alpha\to 2$ be constant $g(\alpha)(0)$.
  \medskip

  We claim that $\mathbf f$ has no $T$-uniformization. Otherwise, let  $\varphi:S\to 2$ be a $T$-uniformization of $\textbf f$. Now, find a countable elementary submodel $M\prec H(\aleph_2)$ containing $T$ and $\varphi$ such that $g(\alpha)=F(\varphi\cap M)$. Let $g(0)=1-i=F(\varphi\cap M)(0)$ and $\alpha=\omg\cap M$.
  
   
   \begin{clm}
    We used the second case definition for $F(\varphi\cap M)$.
   \end{clm}
      
\begin{cproof}

   Otherwise, we used the first case definition of $F$ with some node $s\in S_{<\alpha}$ and $i<2$. But then for any  $t\in T_\alpha$ above $s$ so that $t^\downarrow\subs S$, $\varphi(t\uhr \xi)=i$ for infinitely many $\xi\in E_{\varphi\cap M}= C_\alpha$. However, $g(\alpha)=1-i$ and so for every $t\in S_\alpha$, $\varphi(t\uhr \xi)=1-i$ holds for almost all $\xi\in C_\alpha$. In turn, there is no element of $S\cap T_\alpha$ above $s$, which contradicts that $S$ was pruned.
 
\end{cproof}

 In particular, we deduce that $F(\varphi\cap M)(0)=1$ and so $g(\alpha)=0$.  In turn, by elementarity, there are arbitrary large $\alpha'<\alpha$ so that $g(\alpha')=0$ as well. Hence, for any $t\in T_\alpha$ such that $t^\downarrow\subs S$ and any $\vareps<\alpha$, there is $\xi\in \alpha\setm \vareps$ so that $\varphi(t\uhr \xi)=0$. But this implies that Case 1 does hold for $\alpha$ with $i=0$ and any choice of $s$. This contradiction (from the existence of a $T$-uniformization for $\mathbf f$) finishes the proof.

 \end{tproof}

Finally, we end this section by showing that consistently all trees and all ladder systems fail the uniformization property in the strongest sense.

\begin{thm}\label{thm:nonunif}
After adding $\aleph_2$ Cohen subsets to $\omg$, for any $\aleph_1$-tree $T$ and ladder system $\textbf C$, $\neg \unifc{2}{T}{\textbf C}$.
\end{thm}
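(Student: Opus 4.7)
My plan is to reduce the problem to forcing a single Cohen subset of $\omg$ over an intermediate ground model already containing $T$ and $\mathbf C$, and then derive a contradiction via a master-condition argument that exploits the pruning condition on the uniformizing subtree.

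Working from CH in $V$, let $\mathbb P = \mathrm{Add}(\omg, \aleph_2)^V$ and let $G$ be $\mathbb P$-generic. Given $T$ an $\aleph_1$-tree and $\mathbf C$ a ladder system in $V[G]$, the $\aleph_2$-chain condition of $\mathbb P$ yields $A \in [\aleph_2]^{\aleph_1}$ with $T, \mathbf C \in W := V[G \uhr A]$. I pick any $\gamma \in \aleph_2 \setm A$; by the product lemma, the corresponding $\gamma$th Cohen subset $c:\omg \to 2$ is $\mathrm{Add}(\omg, 1)^W$-generic over $W$. I let $\mathbf f$ be the monochromatic 2-colouring of $\mathbf C$ with $f_\alpha \equiv c(\alpha)$, and aim to show $\mathbf f$ admits no $T$-uniformization in $V[G]$. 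Suppose toward contradiction that $\varphi:S \to 2$ is such a $T$-uniformization in $V[G]$. Then $\varphi \in V[G \uhr B]$ for some $B \supseteq A \cup \{\gamma\}$ of size $\aleph_1$; writing $W' := V[G \uhr (B \setm \{\gamma\})]$, the subset $c$ remains $\mathrm{Add}(\omg, 1)^{W'}$-generic over $W'$, so it suffices to prove: for any model $W'$ containing $T, \mathbf C$, Cohen forcing $\mathbb Q := \mathrm{Add}(\omg, 1)^{W'}$ forces that $\mathbf f^{\dot c}$ has no $T$-uniformization in $W'[\dot c]$.

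For this, let $\dot\varphi, \dot S \in W'$ be $\mathbb Q$-names and $p \in \mathbb Q$ force that $\dot\varphi$ uniformizes $\mathbf f^{\dot c}$ on $\dot S$. I pick a countable elementary submodel $M \prec H(\Theta)^{W'}$ with all the relevant parameters inside, set $\alpha := M \cap \omg$, and use the $\sigma$-closure of $\mathbb Q$ to build an $M$-master condition $q \leq p$ with $\dom q \subseteq \alpha$ that decides the restrictions $\dot\varphi \uhr T_{<\alpha} =: \psi^*$ and $\dot S \cap T_{<\alpha} =: S^*$, and moreover pins down a specific cofinal branch $(s_\xi)_{\xi<\alpha}$ through $S^*$ corresponding to a $\mathbb Q$-name $\dot t^*$ that $q$ forces into $\dot S \cap T_\alpha$. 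Such a branch exists by the pruning of $\dot S$ forced by $q$; iterating decisions inside $M$ using $\sigma$-closure produces the sequence $(s_\xi)$ without needing $\alpha \in \dom q$. Now consider the two extensions $q_\epsilon := q \cup \{(\alpha, \epsilon)\}$ for $\epsilon \in \{0,1\}$: they force $\dot c(\alpha) = \epsilon$ and $\dot t^* \in \dot S$, so the uniformization condition at level $\alpha$ applied to $\dot t^*$ gives $q_\epsilon \force \psi^*(s_\xi) = \epsilon$ for almost all $\xi \in C_\alpha$. But $(\psi^*(s_\xi))_{\xi \in C_\alpha}$ is a fixed sequence in $W'$, so either it eventually stabilizes at a single value $d \in \{0,1\}$ (in which case $q_{1-d}$ forces a falsehood) or it does not stabilize (in which case both $q_0$ and $q_1$ force falsehoods); either way, we reach a contradiction.

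The main obstacle is the construction of the master condition in the middle step: producing $q$ which simultaneously decides $\psi^*$, $S^*$, and a specific cofinal branch through $S^*$ of height $\alpha$ while keeping $\alpha \notin \dom q$. This combines the pruning of $\dot S$ (to name, inside $M$, some $\dot t^* \in \dot S \cap T_\alpha$ above any fixed $s \in S^*$), the $\sigma$-closure of Cohen forcing on $\omg$ (so decisions at ordinals strictly below $\alpha$ never commit the value at $\alpha$), and a bookkeeping diagonalization inside $M$ that stabilizes the initial segments $\dot t^* \uhr \xi$ as ground-model elements $s_\xi$ for $\xi$ cofinal in $\alpha$.
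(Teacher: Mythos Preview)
Your reduction to a single Cohen subset of $\omg$ over an intermediate model is fine and matches the paper's setup. The gap is in the master-condition step: you cannot arrange for $q$ (with $\alpha\notin\dom q$) to force a named element $\dot t^*$ into $\dot S\cap T_\alpha$ while also deciding its downward branch $(s_\xi)_{\xi<\alpha}$. If $\dot t^*\in M$, then the $(M,\mathbb Q)$-generic condition $q$ decides $\dot t^*$ to be some ground-model element lying in $M$, hence of height strictly below $\alpha$. If instead you build the branch $(s_\xi)$ by iterated pruning inside $M$---at each stage extending $q_n$ to force some $s_{n+1}>s_n$ of larger height into $\dot S$---then the limit $q$ only forces each $s_n\in\dot S$; pruning does \emph{not} pass to the limit, since the sets $\{t\in\dot S_\alpha:t>s_n\}$ are each forced nonempty but their intersection may well be forced empty. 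And if you take $\dot t^*$ external to $M$ (say the least element of $\dot S\cap T_\alpha$ in some enumeration), then deciding its restrictions requires deciding enough of $\dot S\cap T_\alpha$, which in general commits the generic at $\alpha$ or above.

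The paper avoids this obstruction by the dual strategy: rather than forcing one element \emph{into} $\dot S$ at a limit level, it forces \emph{every} element of a fixed limit level \emph{out} of $\dot S$, contradicting that $\dot S$ is pruned. Concretely, it takes a continuous tower $(M_n)_{n\le\omega}$ with $\delta_n=M_n\cap\omg$, lists $T_{\delta_\omega}$ as $\{t_n:n<\omega\}$, and at stage $n$ builds an $M_{n+1}$-generic $r\le q_n$ with $\dom r\subseteq\delta_{n+1}$ that decides $\dot\varphi$ below $\delta_{n+1}$; it then extends by setting $\dot c(\delta_{n+1})=1-i$, where $i$ is a value that $\dot\varphi$ takes infinitely often along $t_n^\downarrow$ on $C_{\delta_{n+1}}$. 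This forces $t_n\uhr\delta_{n+1}\notin\dot S$ (otherwise uniformization at level $\delta_{n+1}$ fails), hence $t_n\notin\dot S$ by downward closure. The lower bound $q_\omega$ then forces $\dot S\cap T_{\delta_\omega}=\emptyset$. This ``kill all branches by diagonalizing over an $\omega$-tower'' is precisely what your single-model ``find one branch'' argument lacks.
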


That is, we will iterate posets of the form $\{p:\dom p\in \omg, \ran p=2\}$ with countable support in $\omega_2$ stages. Note that this forcing is countably closed and so no new countable sets are introduced. We also use the following observation:

\begin{obs}\label{obs:sigma}
Suppose that $\mb Q$ is a countably closed poset and $\textbf f$ is a ladder system colouring with no $T$-uniformization (for some $\aleph_1$-tree $T$). Then $\textbf f$ has no $T$-uniformization in any forcing extension by $\mb Q$.
\end{obs}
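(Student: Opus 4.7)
The plan is to prove the contrapositive: if some forcing extension $V[G]$ by $\mb Q$ contains a $T$-uniformization of $\mathbf f$, then one already exists in $V$. The mechanism is the standard $\sigma$-closure trick: build, by recursion in $V$, countable approximations to the name of the alleged uniformization, and show that their union is itself a $T$-uniformization in $V$. The crucial inputs are that each level $T_\alpha$ is countable and that $\mb Q$ permits deciding countably many statements at each stage.

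Concretely, suppose some $q\in \mb Q$ forces that $\dot\varphi:\dot S\to\omega$ is a $T$-uniformization of $\mathbf f$. I would recursively construct a decreasing chain of conditions $\langle p_\alpha:\alpha<\omg\rangle$ below $q$ together with ground-model sets $S^\alpha\subseteq T_{<\alpha}$ and maps $\varphi^\alpha:S^\alpha\to\omega$ so that
\[ p_\alpha\Vdash \dot S\cap T_{<\alpha}=S^\alpha \text{ and } \dot\varphi\uhr S^\alpha=\varphi^\alpha. \]
At a successor stage $\alpha+1$, enumerate the countable level $T_\alpha=\{t_n:n<\omega\}$ and take countably many successive extensions of $p_\alpha$ deciding, for each $t_n$, whether $t_n\in\dot S$ and (if so) the value $\dot\varphi(t_n)\in\omega$; countable closure produces $p_{\alpha+1}$. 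At a limit stage $\alpha$, countable closure directly supplies a lower bound $p_\alpha$ of $\langle p_\beta:\beta<\alpha\rangle$, which by coherence automatically forces $\dot S\cap T_{<\alpha}=\bigcup_{\beta<\alpha}S^\beta$ and analogously for $\dot\varphi$.

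Now let $S=\bigcup_{\alpha<\omg}S^\alpha$ and $\varphi=\bigcup_{\alpha<\omg}\varphi^\alpha$; both lie in $V$, and $S$ is downward closed by coherence. For the uniformization condition, if $s\in S$ has limit height $\alpha$ then $s\in S^{\alpha+1}$, so $p_{\alpha+1}$ forces $\dot\varphi(s\uhr\xi)=\varphi(s\uhr\xi)$ for every $\xi<\alpha$ while simultaneously forcing $\dot\varphi(s\uhr\xi)=f_\alpha(\xi)$ for almost all $\xi\in C_\alpha$; hence the same equation holds of $\varphi$ in $V$. The delicate point is verifying pruning: given $s\in S$ and $\alpha'<\omg$, pick $\alpha>\alpha'$, and note that $p_\alpha$ forces both that $\dot S$ is pruned and that $\dot S\cap T_{<\alpha}=S^\alpha$, so it forces the existence of some $t\in S^\alpha$ with $t\geq s$ and $\htt(t)=\alpha'$ (using downward closure of $\dot S$ to cut a $\dot S$-extension of $s$ of height $\geq\alpha'$ down to height $\alpha'$). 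Since this is a bounded existential statement about the ground-model set $S^\alpha$, it is absolute and already holds in $V$. Thus $(S,\varphi)$ is a $T$-uniformization in $V$, contradicting the hypothesis. The main obstacle is this absoluteness-based pruning check; the rest is routine $\sigma$-closure bookkeeping.
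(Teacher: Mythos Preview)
Your argument is correct and follows essentially the same route as the paper: build a decreasing $\omg$-chain of conditions deciding the countable initial segments of the putative uniformization, and take the union in $V$. The paper's proof is much terser, simply asserting that ``whether a map $\psi$ is a $T$-uniformization of $\mathbf f$ is decided on its countable initial segments,'' while you explicitly spell out the pruning check via absoluteness of a bounded existential statement---a detail the paper leaves implicit.
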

\begin{proof}
Suppose that $q_0\force \dot \varphi:S\to \oo$ is a $T$-uniformization of $\textbf f$. Construct a decreasing sequence of conditions $(q_\alpha)_{\alpha<\omg}$ with maps $(\psi_\alpha)_{\alpha<\omg}$ so that $q_\alpha\force \dot \varphi\uhr S_{<\alpha}=\psi_\alpha$. Note that whether a map $\psi$ is a $T$-uniformization of $\textbf f$ is decided on its countable initial segments. In turn, $\psi=\bigcup_{\alpha<\omg}\psi_\alpha$ must be a $T$-uniformization of $\textbf f$ in the ground model.
\end{proof}

\begin{tproof}[Proof of Theorem \ref{thm:nonunif}]
 Any $\aleph_1$-tree $T$ and ladder system $\textbf C$ of the final extension appears at some intermediate stage of the iteration, so we will assume $T,\textbf C$  are in the ground model $V$ and show that if  $\dot g\in 2^\omg$ is Cohen generic over $V$ then $\neg \unifc{2}{T}{\textbf C}$ holds in $V[\dot g]$. In fact, we prove that if $\dot f_\alpha$ is constant $\dot g(\alpha)$ then $\textbf f$ has no $T$-uniformization in $V[\dot g]$. Since the whole forcing is countably closed, the rest of the iteration cannot add a uniformization for this colouring in later stages by Observation \ref{obs:sigma}.
 
 Now, suppose that $\textbf{f}$ does have a uniformization in $V[\dot g]$ and we reach a contradiction. Find a Cohen condition $p$ (i.e., a countable partial function from $\omg$ to $2$) and name $\dot \varphi$, so that $p\force \dot \varphi:\dot S\to 2$ is a $T$-uniformization of  $\textbf{f}$. Take a continuous, increasing sequence of countable elementary submodels $(M_n)_{n\leq\oo}$ of $H(\aleph_2)$ so that $T,\dot g,\dot \varphi,p\in M_0$. Let $\delta_n=M_n\cap \omg$ for $n\leq\oo$, and list $T_{\delta_\oo}$ as $\{t_n:n\in \oo\}$. We will define $q_0\geq q_1\geq q_2\dots $ conditions below $p$ so that $q_n\in M_{n+1}$, $q_n$ decides $\varphi\uhr \dot S_{<\delta_n}$ and $q_{n+1}\force t_n\uhr \delta_{n+1}\notin \dot S$ and so $t_n\notin \dot S$ by the downward closure of $\dot S$. In turn, any lower bound $q_\oo$ to this sequence will force that $t_n\notin \dot S$ for all $n<\oo$ and so $q_\oo\force \dot S\cap T_{\delta_\oo}=\emptyset$, a contradiction.
 
 Let us describe the general step in the construction i.e., finding $q_{n+1}$ from $q_n$ (where $q_{-1}=p$). Given $q_n\in M_{n+1}$, we first find an extension $r\in M_{n+2}$ so that $r\subs M_{n+1}$ and $r$ is $M_{n+1}$-generic; this can be done since the forcing is $\sigma$-closed. Now, $r$ decides $\varphi\uhr \dot S_{<\delta_{n+1}}$ and decides whether $t_n^\downarrow\cap M_{n+1}\subs \dot S$ or not. In the former case, there is an $i<2$ so that $r$ forces that $\dot \varphi(t_n\uhr \xi)=i$ for infinitely many $\xi\in C_{\delta_{n+1}}$. Since $r\subs M_{n+1}$, we can find an extension $q_{n+1}\leq r$ in $M_{n+2}$ that forces $\dot g(\delta_{n+1})=1-i$. In turn, $q_{n+1}$ forces that $t_n\uhr \delta_{n+1}\notin \dot S$ and so $t_n\notin \dot S$ as well. 
 
\end{tproof}

%

%
%

 \section{Forcing uniformizations and preserving Suslin trees}\label{sec:forceunif}

In \cite{justinmin}, Moore introduced a forcing  that uniformizes a given colouring on an Aronszajn tree $T$, which furthermore can be iterated without adding reals. In turn, consistently, CH  and $\unif{\oo}{T}{\mathbf C}$ holds  for any ladder systems $\mathbf C$ and Aronszajn tree $T$.

However, we proved in Theorem \ref{thm:nonunifs} that whenever CH holds, for any Suslin tree $T$ there are monochromatic 2-colourings of ladder systems which do not have  $T$-uniformization. In turn, there could be no Suslin trees in Moore's model. We also proved  in \cite{soukup2018model} that one can preserve CH and a single Suslin tree $R$, while forcing the existence of $T$-uniformizations for all $\oo$-colourings and those trees $T$ which do not embed $R$ in a strong sense.

\medskip

Our goal in this section is to show that if we do not require CH then even $\unif{\oo}{\omg}{\mathbf C}$ for all $\mathbf C$ is consistent with the existence of Suslin trees. We will do this by proving that for any ladder system colouring $\mathbf f$, there is a natural ccc forcing $\mc P_{\mathbf f}$ which introduces an $\omg$-uniformization for $\mathbf f$ and such that $\mc P_{\mathbf f}$ preserves all Suslin trees. One can then apply well known iteration theorems to find the desired model; the details follow below.


\begin{thm}\label{thm:force}
 For any ladder system $\mathbf C$ and $\oo$-colouring $\mathbf f$ of $\mathbf C$, there is a ccc forcing $\mc P_{\textbf f}$ of size $\aleph_1$,  so that 
 \begin{enumerate}
  \item  $\force_{\mc P_{\mathbf f}}$ there is an $\omg$-uniformization of $\mathbf f$, and
  \item if $R$ is a Suslin tree in the ground model then $\force_{\mc P_{\mathbf f}}$ $\check R$ is Suslin.

 \end{enumerate}

\end{thm}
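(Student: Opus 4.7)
The plan is to take $\mc P_{\mathbf f}$ to be a finite-conditions forcing whose conditions record both an approximation to the generic uniformization and explicit ``promises'' about the ladders. Concretely, a condition will be a triple $(p,F,\mathbf n)$ where $p$ is a finite partial function $\omg\to\omega$, $F\subseteq\lim\omg$ is finite, and $\mathbf n:F\to\omg$ satisfies $\mathbf n(\alpha)<\alpha$ for every $\alpha\in F$. I impose two clauses: \emph{consistency}, i.e.\ $p(\xi)=f_\alpha(\xi)$ whenever $\alpha\in F$ and $\xi\in\dom p\cap C_\alpha$ with $\xi\geq \mathbf n(\alpha)$; and \emph{uniqueness}, i.e.\ for each $\xi\in\omg$ there is at most one $\alpha\in F$ with $\xi\in C_\alpha$ and $\xi\geq\mathbf n(\alpha)$. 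Extension is coordinatewise with $\mathbf m\uhr F=\mathbf n$, and so $|\mc P_{\mathbf f}|=\aleph_1$.

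Density then easily gives (1). To add a given $\alpha\in\lim\omg$ to $F$, I use that $\dom p\cap C_\alpha$ and each $C_\alpha\cap C_\beta$ (for $\beta\in F\setminus\{\alpha\}$) are finite — the latter because $C_\alpha$ is cofinal of type $\omega$ in $\alpha$, so $|C_\alpha\cap\beta|<\omega$ for $\beta<\alpha$ — and pick $\mathbf n(\alpha)<\alpha$ above all finitely many potential conflicts. To add a given $\xi\in\omg$ to $\dom p$, uniqueness says at most one promise can constrain the value at $\xi$, so $p(\xi)$ can be set freely or to $f_\alpha(\xi)$ for the unique active $\alpha$. Therefore the generic $\varphi=\bigcup p$ is a total function $\omg\to\omega$ with $\varphi\uhr C_\alpha=^* f_\alpha$ for every limit $\alpha$.

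The real work is the chain condition, and I will in fact show that $\mc P_{\mathbf f}$ has the \emph{Knaster property}; this is stronger than ccc and will yield (2) for free below. Given $\aleph_1$ conditions $(p_\gamma,F_\gamma,\mathbf n_\gamma)$, apply the $\Delta$-system lemma to the finite sets $D_\gamma=\dom p_\gamma\cup F_\gamma\cup\mathbf n_\gamma[F_\gamma]$ with root $R$, thin so that $p_\gamma$ and $\mathbf n_\gamma$ take the same values on $R$, and thin again so that the ``wings'' $D_\gamma\setminus R$ are ordinal-separated: $\gamma<\delta$ in the surviving set $I$ implies $h_\gamma:=\max(D_\gamma\setminus R)<\min(D_\delta\setminus R)=:l_\delta$. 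The main obstacle is to verify that the pointwise union of any two conditions from $I$ is again a condition; the awkward cases are the mixed ones. (a) If $\alpha$ is in the wing of $F_\gamma$ and $\xi$ in the wing of $\dom p_\delta$, then $\xi\geq l_\delta>h_\gamma\geq\alpha$, forcing $\xi\notin C_\alpha$ and making consistency vacuous. (b) If $\alpha$ is in the wing of $F_\delta$ and $\xi$ in the wing of $\dom p_\gamma$, then $\mathbf n_\delta(\alpha)\geq l_\delta>h_\gamma\geq\xi$, so the constraint is not triggered. The same ordinal inequalities, together with $C_\alpha\subseteq\alpha$, rule out fresh uniqueness violations across mixed wings.

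Finally, (2) is a standard consequence of Knaster. Suppose $R$ is a Suslin tree and $\{(q_\gamma,t_\gamma)\}_{\gamma<\omg}$ is an antichain of $\mc P_{\mathbf f}\times R$; apply Knaster to thin to an uncountable $I$ with $\{q_\gamma:\gamma\in I\}$ pairwise compatible. Then $\{t_\gamma:\gamma\in I\}$ is an uncountable subset of $R$, so by Suslinity of $R$ two of its elements are comparable, say $t_\gamma\leq t_\delta$, and any common extension of $q_\gamma,q_\delta$ paired with $t_\delta$ contradicts the antichain assumption. Hence $\mc P_{\mathbf f}\times R$ is ccc, which is exactly the statement that $R$ remains Suslin in $V^{\mc P_{\mathbf f}}$.
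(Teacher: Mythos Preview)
There is a genuine error: your forcing is not ccc. Take any ladder system with $0\in C_\alpha$ for every limit $\alpha$ (this is certainly possible), and consider the conditions $q_\alpha=(\emptyset,\{\alpha\},\alpha\mapsto 0)$ for $\alpha\in\lim\omg$. Each $q_\alpha$ trivially satisfies consistency and uniqueness (since $|F|=1$). But any common extension $(q,G,\mathbf m)$ of $q_\alpha$ and $q_\beta$ must have $\alpha,\beta\in G$ with $\mathbf m(\alpha)=\mathbf m(\beta)=0$, because you froze $\mathbf n$ in the ordering; then at $\xi=0$ both $\alpha$ and $\beta$ are active, violating uniqueness. So $\{q_\alpha:\alpha\in\lim\omg\}$ is an uncountable antichain. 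The combination of the uniqueness clause with a frozen $\mathbf n$ is simply too rigid.

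Even if you relax uniqueness to mere compatibility of the active promises (as in the standard Yorioka-style poset), your Knaster argument has a gap in case~(b). You assert that if $\alpha$ lies in the wing of $F_\delta$ then $\mathbf n_\delta(\alpha)\geq l_\delta$, but this is false: $\mathbf n_\delta(\alpha)\in D_\delta$ may well lie in the root $R$. In that situation a point $\xi$ in the wing of $\dom p_\gamma$ can satisfy $\xi\geq\mathbf n_\delta(\alpha)$ and $\xi\in C_\alpha$, yet nothing in the $\gamma$-condition constrains $p_\gamma(\xi)$ to equal $f_\alpha(\xi)$, since $C_\alpha$ is not the same ladder as $C_{\alpha'}$ for the isomorphic copy $\alpha'\in F_\gamma$. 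So the union need not be a condition. In fact it is not clear that the natural uniformization posets are Knaster for arbitrary $\mathbf C$ and $\mathbf f$, and the paper does not claim this.

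The paper takes a different route: rather than aiming for a property (Knaster) that is uniform over all ccc extensions, it shows directly that $\mc P_{\mathbf f}\times R$ is ccc by exploiting that $R$ is Suslin. Given $\aleph_1$ conditions paired with nodes $t_\xi\in R$, one fixes a tower of elementary submodels, isolates one condition $q_\xi$ outside the tower, reflects into the model to find an isomorphic $q_\zeta$ whose associated node $t_\zeta$ is \emph{below} $t_\xi$ (this is exactly where Suslinness of $R$ is used, via Fact~\ref{fact:suslin}), and then amalgamates. The point is that you only need one compatible pair, not an uncountable linked set, and the choice of that pair is guided by $R$ itself.
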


\begin{cor}\label{cor:force}
Suppose CH holds. Then there is a proper, cardinality and cofinality preserving forcing $\mb P$ so that in $V^{\mb P}$, $\unif{\oo}{\omg}{\mathbf C}$ holds for any ladder system $\mathbf C$, and any Suslin tree in the ground model remains Suslin in $V^{\mb P}$.
\end{cor}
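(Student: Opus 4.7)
The plan is to define $\mb P$ as a countable support iteration $\langle \mb P_\alpha, \dot{\mc Q}_\alpha : \alpha < \omega_2 \rangle$ of length $\omega_2$, where each $\dot{\mc Q}_\alpha$ is a name for a forcing of the form $\mc P_{\dot{\mathbf f}_\alpha}$ with $\dot{\mathbf f}_\alpha$ a name for a ladder system colouring in the intermediate extension $V^{\mb P_\alpha}$. Standard bookkeeping (using a surjection $\omega_2 \to \omega_2 \times \omega_2$ and an enumeration of nice names along the iteration) arranges that every ladder system colouring appearing in any $V^{\mb P_\alpha}$ is scheduled to be uniformized at some later stage.

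First I would verify the iteration preserves cardinals and cofinalities. Each iterand is ccc of size $\aleph_1$ by Theorem \ref{thm:force}, hence proper, so by the usual preservation theorem for countable support iterations $\mb P$ is proper and $\omg$ is preserved. A standard $\Delta$-system argument relying on CH in the ground model yields the $\aleph_2$-c.c.\ of $\mb P$, so $\aleph_2$ and all larger cardinals and cofinalities are preserved as well. For the uniformization property in the final extension, any ladder system colouring $\mathbf f$ in $V^{\mb P}$ is coded by an object of size $\aleph_1$ and therefore, by the $\aleph_2$-c.c.\ and properness, has a name occurring already in some intermediate extension $V^{\mb P_\alpha}$; the bookkeeping then guarantees that a uniformization for $\mathbf f$ is added at some later stage $\beta \geq \alpha$, and being a uniformization is absolute, so this map persists to $V^{\mb P}$ as an $\omg$-uniformization.

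The main obstacle, beyond these routine iteration considerations, is preserving ground model Suslin trees through the iteration. For this I would invoke the preservation theorem that a countable support iteration of proper forcings, each preserving a given Suslin tree $R$, again preserves $R$; this is due to Miyamoto and can also be found in \cite{shelah2017proper}. Since by Theorem \ref{thm:force}(2) every iterand $\mc P_{\dot{\mathbf f}_\alpha}$ preserves every ground model Suslin tree, applying the preservation theorem separately to each such tree $R \in V$ yields that $R$ remains Suslin in $V^{\mb P}$, completing the proof.
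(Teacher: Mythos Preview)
Your proposal is correct and follows essentially the same approach as the paper: a countable support iteration of length $\omega_2$ of the posets $\mc P_{\mathbf f}$, with Miyamoto's preservation theorem handling the Suslin trees. You have simply spelled out in more detail the bookkeeping, the $\aleph_2$-c.c.\ argument, and the chain-condition reflection that the paper leaves implicit in the phrase ``an appropriate countable support iteration.''
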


\begin{tproof}
An appropriate countable support iteration (in length $\oo_2$) of posets of the form $\mc P_{\mathbf f}$  will force that $\unif{\oo}{\omg}{\mathbf C}$ holds for all ladder systems $\mathbf C$. The fact that any ground model Suslin tree remains Suslin follows from Tadatoshi Miyamoto's preservation theorem \cite{spres}.\footnote{That is, if successor stages of a proper, countable support iteration preserve a given Suslin tree, then so does the limit steps of the iteration.}
\end{tproof}

Similar models that combine features of the constructible universe $L$ and consequences of MA or PFA have received considerable attention \cite{avraham1978consistency, avraham1982forcing, larson2002katetov, yorioka2010uniformizing, MR2979581}. However, we could not find a reference for a model combining the existence of Suslin trees and the uniformization property.
For example, in \cite{avraham1982forcing}, a forcing axiom for \emph{stable posets} is proved consistent: although the model satisfies $\unif{\oo}{\omg}{\mathbf C}$, it has no Suslin trees. 

More recently, the centre of attention has been on models of $MA(S)$ (or $PFA(S)$) i.e., the forcing axiom for ccc  partial orders (or proper posets, respectively) that preserve a fixed (coherent) Suslin tree $S$. We have the following corollary now.

\begin{cor}Suppose $S$ is a Suslin tree. Then
$MA(S)$ implies $\unif{\oo}{\omg}{\mathbf C}$ for all $\mathbf C$.
\end{cor}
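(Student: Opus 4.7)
Fix a ladder system $\mathbf C$ and an $\oo$-colouring $\mathbf f$ of $\mathbf C$. The plan is to apply $MA(S)$ directly to the ccc poset $\mc P_{\mathbf f}$ supplied by Theorem \ref{thm:force}. By clause (2) of that theorem, $\mc P_{\mathbf f}$ preserves the fixed Suslin tree $S$, so it lies in the class of ccc posets to which $MA(S)$ applies. The bulk of the content of the corollary is thus already packaged into Theorem \ref{thm:force}; what remains is the routine ``$MA$ gives what the forcing forces'' extraction.

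Next, I would isolate the natural family of $\aleph_1$-many dense sets whose simultaneous meeting is enough to recover a uniformization from a filter rather than from a generic. The natural conditions in $\mc P_{\mathbf f}$ are finite approximations to the eventual uniformization $\varphi:\omg\to\oo$, together with a finite list of ladder indices on which the ``almost equal'' commitment has been activated. Accordingly, for each $\beta<\omg$ let $E_\beta\subs \mc P_{\mathbf f}$ be the set of conditions that have decided $\varphi(\beta)$, and for each $\alpha\in\lim\omg$ let $D_\alpha\subs \mc P_{\mathbf f}$ be the set of conditions that have activated the commitment $\varphi\uhr C_\alpha=^*f_\alpha$. Both $E_\beta$ and $D_\alpha$ are dense: any finite approximation can be extended by one more value or one more ladder index, the latter at the cost of adding finitely many corrections into the ``error'' finite set. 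Note also that $|\mc P_{\mathbf f}|=\aleph_1$, so there are only $\aleph_1$-many such dense sets.

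Now apply $MA(S)$ to $\mc P_{\mathbf f}$ against the family $\{E_\beta:\beta<\omg\}\cup\{D_\alpha:\alpha\in\lim\omg\}$ to obtain a filter $G$ meeting every member. Define $\varphi:\omg\to\oo$ by taking the union of the finite approximations appearing in conditions of $G$; the density of the $E_\beta$'s makes $\varphi$ a total function on $\omg$, and the density of the $D_\alpha$'s ensures $\varphi\uhr C_\alpha=^*f_\alpha$ for every $\alpha\in\lim\omg$. Hence $\varphi$ is the desired $\omg$-uniformization of $\mathbf f$.

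I do not expect a genuine obstacle in this corollary beyond what has already been overcome in proving Theorem \ref{thm:force}: the ccc-ness and Suslin-tree preservation of $\mc P_{\mathbf f}$ are the substantive facts, while the density of the $E_\beta$'s and $D_\alpha$'s is essentially what it means for $\mc P_{\mathbf f}$ to force an $\omg$-uniformization in clause (1). The only minor care required is in confirming that the natural description of the conditions of $\mc P_{\mathbf f}$ really makes these sets dense, but this should be immediate from the construction used in Theorem \ref{thm:force}.
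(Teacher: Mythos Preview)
Your proposal is correct and is exactly the intended argument: the paper states this corollary without proof, since it follows immediately from Theorem~\ref{thm:force} by the standard ``$MA$ extracts what the forcing forces'' step you describe.

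One small remark on the details: your description of the conditions of $\mc P_{\mathbf f}$ as ``finite approximations together with a finite list of ladder indices on which the commitment has been activated'' matches Yorioka's variant (item (2) in the paper's list) more closely than the paper's own $\mathbf C$-closed poset. In the actual $\mc P_{\mathbf f}$, there is no separate activation step: the ordering $(\dagger)$ makes the commitment automatic once $\alpha$ belongs to the domain. Consequently your two families $E_\beta$ and $D_\alpha$ collapse to the single family $D_\alpha=\{q\in\mc P_{\mathbf f}:\alpha\in D^q\}$ already shown dense in the claim within the proof of Theorem~\ref{thm:force}, and claim (1.5) there gives the almost-equality directly from any filter meeting all $D_\alpha$. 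This simplifies rather than complicates your argument.
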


This should be compared with a result of P. Larson and S. Todorcevic: if $S$ is a Suslin tree and $\textbf C$ is an $S$-name for a ladder system then forcing with $S$ will introduce a colouring of $\textbf C$ with no $\omg$-uniformization \cite[Theorem 6.2]{larson2001chain}. It would be interesting to see how much of that argument carries over to $T$-uniformizations.


\medskip

Now, let us prove the theorem. There are various ways to introduce a uniformization for a given ladder system colouring by a ccc forcing. These techniques include posets of
\begin{enumerate}
\item countable partial functions $p$ from $\omg$ to $\oo$ that are defined on finite unions of the ladders, and so that $p$ agrees with $f_\alpha$ almost everywhere on its domain \cite[Chapter II, Theorem 4.3]{shelah2017proper};
    \item finite partial maps $p$ from $\lim(\omg)$ to $\oo$ so that   $$\{f_\alpha\uhr \{C_\alpha(n): n\geq p(\alpha)\}:\alpha\in \dom p\}$$ are pairwise compatible \cite{yorioka2010uniformizing}.
\end{enumerate}

In both cases the order on the posets is extension as functions.\footnote{See \cite{whitehead1,whitehead2} for further examples. Here the uniformization is done by countable approximations which allow certain fusion arguments.} 
It can be shown that both the above posets preserve Suslin trees (see the remarks at the end of the section). However, we  take this opportunity to present another natural variant: 
\begin{enumerate}
\setcounter{enumi}{2}
\item forcing with finite partial maps defined on what we call \emph{$\mathbf C$-closed sets}. In our case, the uniformization property is coded into the ordering. 
\end{enumerate}

We present the details below.

\begin{tproof}[Proof of Theorem \ref{thm:force}] Given a ladder system $\mathbf C$, we say that $D\subs \omg$ is $\textbf C$-closed if for any $\alpha\neq \beta\in D\cap \lim \omg$,  $C_\alpha\cap C_\beta\subs D$. Let $\ecl{E}$ denote the smallest $\textbf C$-closed superset of $E$.

We show that a finite set $E$ has finite $\textbf C$-closure, and even more:

 \begin{obs}\label{clm:cl} Suppose that $\mathbf C$ is a ladder system.
 \begin{enumerate}
 \item Any finite set $E$ has finite $\mathbf C$-closure and $\max \ecl E= \max E$;
  \item any initial segment of a $\mathbf C$-closed set is $\mathbf C$-closed;
  \item\label{it:maxcl} if $D$ is $\mathbf C$-closed and $E$ is finite then $$\ecl{D\cup E}\setm (\max E+1)=D\setm (\max E+1).$$ 
 \end{enumerate}
 \end{obs}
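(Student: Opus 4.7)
The plan is to dispatch (2) immediately, then prove (1) by induction on $\max E$, and finally use the structure of (1) to deduce (3).

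Part (2) is essentially by inspection: given $D$ $\mathbf{C}$-closed and an initial segment $D\cap \delta$, any intersection $C_\alpha\cap C_\beta$ for $\alpha\neq \beta$ limits in $D\cap \delta$ lies in $D$ by $\mathbf{C}$-closedness of $D$ and in $\min(\alpha,\beta)<\delta$ automatically, hence in $D\cap \delta$.

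For (1), the induction is on the ordinal $\gamma:=\max E$. The $\gamma=0$ case and the case of $\gamma$ a successor are easy: in the successor case $\gamma$ is never involved in any intersection $C_\alpha\cap C_\beta$ (those require $\alpha,\beta\in \lim\omg$), so $\langle E\rangle=\langle E\setminus\{\gamma\}\rangle\cup\{\gamma\}$ and the induction hypothesis applied to $E\setminus\{\gamma\}$ finishes the argument. The real case is $\gamma$ a limit. Set $E^*:=E\setminus\{\gamma\}$ and $M:=\max E^*<\gamma$. The induction hypothesis gives $\langle E^*\rangle$ finite with maximum $M$. The crucial observation is that, because $C_\gamma$ has order type $\omega$, the set $K:=C_\gamma\cap (M+1)$ is \emph{finite}. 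Consequently $\langle E^*\rangle\cup K$ is still finite with maximum $\leq M<\gamma$, and a second application of the induction hypothesis yields that $G:=\langle \langle E^*\rangle\cup K\rangle$ is finite with $\max G\leq M$. I would then verify that $G\cup \{\gamma\}$ is $\mathbf{C}$-closed: the only nontrivial check involves an intersection $C_\alpha\cap C_\gamma$ for some limit $\alpha\in G$, but $\alpha\leq M$ forces $C_\alpha\cap C_\gamma\subseteq C_\gamma\cap (M+1)=K\subseteq G$. Minimality of $\langle E\rangle$ then gives $\langle E\rangle\subseteq G\cup \{\gamma\}$, proving both finiteness and $\max \langle E\rangle=\gamma$.

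For (3), set $M:=\max E$ and consider $\tilde D:=D\cup (\langle D\cup E\rangle \cap (M+1))$; the idea is that the ``bottom part'' $\tilde D\cap (M+1)$ already captures everything new introduced by $E$. To verify $\tilde D$ is $\mathbf{C}$-closed, take distinct limits $\alpha,\beta\in \tilde D$: if both lie in $D$, use $\mathbf{C}$-closedness of $D$; otherwise one of them, say $\beta$, lies in $\langle D\cup E\rangle\cap (M+1)$, so $C_\alpha\cap C_\beta\subseteq \beta\leq M$, and $C_\alpha\cap C_\beta\subseteq \langle D\cup E\rangle$ by $\mathbf{C}$-closedness of the latter, placing the intersection in the bottom part of $\tilde D$. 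Since $\tilde D\supseteq D\cup E$, minimality gives $\langle D\cup E\rangle\subseteq \tilde D$, and stripping the ordinals $\leq M$ yields (3) (the reverse inclusion being trivial).

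The main obstacle is the finiteness in (1): a naive iterated closure could, in principle, spawn ever more limits whose intersections with $C_\gamma$ introduce still further ordinals. The preloading trick—adding the entire finite ``footprint'' $K=C_\gamma\cap (M+1)$ of $C_\gamma$ upfront—blocks this cascade, since every future intersection with $C_\gamma$ must already fall inside $K$, after which a single application of the inductive hypothesis closes off the process.
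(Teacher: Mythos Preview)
Your proof is correct. Parts (1) and (2) follow the same skeleton as the paper---induction on $\max E$, and a direct check, respectively---though your limit case in (1) is handled a bit differently: the paper sets $E^*=(E\cap\varepsilon)\cup\bigcup_{\alpha\in E\cap\varepsilon}(C_\alpha\cap C_\varepsilon)$ and asserts $\langle E\rangle=\langle E^*\rangle\cup\{\varepsilon\}$, whereas you preload the larger set $K=C_\gamma\cap(M+1)$. Your version makes the verification that $G\cup\{\gamma\}$ is $\mathbf C$-closed completely transparent, since \emph{any} limit $\alpha\le M$ (not just those originally in $E$) automatically has $C_\alpha\cap C_\gamma\subseteq K$. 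The real divergence is in (3): the paper argues by induction on $\max D$ (tacitly treating $D$ as finite, which suffices for the forcing application), while you give a direct argument via the auxiliary set $\tilde D=D\cup(\langle D\cup E\rangle\cap(M+1))$. Your route is slightly more general---it does not require $D$ to have a maximum---and avoids the inductive bookkeeping entirely.
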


 \begin{cproof}  First, let $E$ be finite and we show, by induction on  $\vareps=\max E$, that $E$ has  finite $\mathbf C$-closure. We can assume that $\vareps\in \lim \omg$ otherwise $\ecl{E}=\ecl{E\cap \vareps}\cup \{\vareps\}$ and we are done by induction. Let $$E^*=(E\cap \vareps) \cup \bigcup_{\alpha\in \vareps \cap E}C_\alpha\cap C_\vareps,$$ and note $\max E^*<\vareps=\max E$  and that $\ecl E=\ecl{E^*}\cup \{\vareps\}$. So $\ecl E$ must have size $|\ecl{E^*}|+1$ which is finite by induction.

 Now suppose that $D$ is $\mathbf C$-closed, let $\gamma<\omg$ and pick any $\alpha<\beta\in D\cap \gamma$. Then $$C_\alpha\cap C_\beta\subs D\cap \beta\subs D\cap \gamma.$$ So the initial segment $D\cap \gamma$ is $\mathbf C$-closed too.
 
 Finally, we prove the last statement by induction on $\delta=\max D$. Since $D\cap \delta$ is $\mathbf C$-closed, we can apply induction to see that $$\ecl{(D\cap \delta)\cup E}\setm (\max E+1)=(D\cap \delta)\setm (\max E+1),$$ and by adding $\delta$ to both sides we get the desired equality.

 \end{cproof}

 \medskip
 
Now, given a ladder system $\mathbf C$ and $\oo$-colouring $\mathbf f$, let $\mc P=\mc P_{\textbf f}$  consist of all finite maps $p:D^p\to \oo$ so that $D^p\subs \omg$ is $\textbf C$-closed. The  extension in $\mc P$ is defined by $q\leq p$ if $q\supseteq p$ and 
 \begin{center}

  \begin{minipage}[c]{0.7\textwidth}
 ($\dagger$) for all $\alpha\in D^p\cap \lim \omg$ and all $\xi\in C_\alpha\cap D^{q}\setm D^p $, $q(\xi)=f_\alpha(\xi).$
  \end{minipage}

 \end{center}


 First, we prove a few simple facts on  $\mc P$ which show that a generic filter gives an $\omg$-uniformization of $\mathbf f$.

  \begin{clm} Suppose that $G\subs \mc P$ is generic and let $\dot \varphi=\cup G$.
 \begin{enumerate}
  \item\label{it:1}    $D_\alpha=\{q\in \mc P:\alpha\in D^q\}$ is dense in $ \mc P$ for all $\alpha\in \omg$;
  \item\label{it:1.5} for any $p\in \mc P$ and $\alpha\in D^p$, $p\force \dot \varphi(\xi)=f_\alpha(\xi)$ for all $\xi\in C_\alpha\setm D^p$;
  
  \item\label{it:2} $\force_{\mc P}\dom \dot \varphi=\omg $ and for all $\alpha\in \lim \omg$ and almost all $\xi\in C_\alpha$, $\force_{\mc P} \dot \varphi(\xi)=f_\alpha(\xi)$.

 \end{enumerate}

 \end{clm}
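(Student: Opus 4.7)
The plan is to prove the three items in order, each building on the previous.

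For part (\ref{it:1}), given $p \in \mc P$ and $\alpha < \omg$, I would construct $q \leq p$ with $\alpha \in D^q$ by setting $D^q = \ecl{D^p \cup \{\alpha\}}$. This is finite by Observation \ref{clm:cl}(1), and by part (\ref{it:maxcl}) of the same observation, the new elements $D^q \setm D^p$ all lie in $\alpha+1$. Assigning values on $D^q \setm D^p$ compatibly with $(\dagger)$ rests on the subclaim that each new $\xi \in D^q \setm D^p$ belongs to $C_\beta$ for \emph{at most one} $\beta \in D^p \cap \lim\omg$: otherwise, $\mathbf C$-closedness of $D^p$ forces $\xi \in C_\beta \cap C_{\beta'} \subseteq D^p$, contradicting the novelty of $\xi$. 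Thus I can unambiguously set $q(\xi) = f_\beta(\xi)$ for the unique such $\beta$ when it exists, and $q(\xi) = 0$ otherwise; then $(\dagger)$ is satisfied by construction.

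For part (\ref{it:1.5}), fix $p \in \mc P$ with $\alpha \in D^p$ and $\xi \in C_\alpha \setm D^p$ (so necessarily $\alpha \in \lim\omg$). I would show that every $q \leq p$ with $\xi \in D^q$ satisfies $q(\xi) = f_\alpha(\xi)$, which is a direct instance of $(\dagger)$ applied with $\alpha \in D^p$ and $\xi \in C_\alpha \cap D^q \setm D^p$. Since such $q$ form a dense set below $p$ by part (\ref{it:1}), this yields $p \force \dot\varphi(\xi) = f_\alpha(\xi)$.

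Part (\ref{it:2}) then follows: $\force \dom \dot\varphi = \omg$ is immediate from the density of the $D_\alpha$'s in part (\ref{it:1}); and for any $\alpha \in \lim\omg$, density again furnishes some $p$ in the generic filter with $\alpha \in D^p$, after which part (\ref{it:1.5}) ensures $\dot\varphi$ agrees with $f_\alpha$ on the cofinite subset $C_\alpha \setm D^p$ of $C_\alpha$. The only substantive point is the uniqueness subclaim used in part (\ref{it:1}); the rest unwinds directly from the definitions and Observation \ref{clm:cl}.
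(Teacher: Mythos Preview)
Your proposal is correct and follows essentially the same approach as the paper: the key step in part (\ref{it:1}) is the uniqueness subclaim (at most one $\beta\in D^p\cap\lim\omg$ with $\xi\in C_\beta$ for each new $\xi$), proved via $\mathbf C$-closure of $D^p$, and parts (\ref{it:1.5}) and (\ref{it:2}) unwind from $(\dagger)$ and density exactly as you describe. Your invocation of Observation~\ref{clm:cl}(\ref{it:maxcl}) to locate the new elements below $\alpha+1$ is correct but not actually needed for the argument.
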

\begin{cproof} (\ref{it:1}) Given $p$ and $\alpha\in \omg\setm  D^p$, we form $D=\ecl{D^p\cup \{\alpha\}}$. This is a finite set and we need to show that there is some $q\supseteq p$ in $\mc P$ so that $\dom q=D$ i.e., that  the extension property ($\dagger$)   is satisfied. To this end, note that for any $\xi \in D\setm D^p$, there is at most one limit $\alpha\in D^p$ so that  $\xi\in C_\alpha$ (this is the crucial point we use $\textbf C$-closure). So, we can define $q(\xi)=f_\alpha(\xi)$ if such an $\alpha$ exists, and let $q(\xi)=0$ otherwise. In turn, we defined $q\in D_\alpha$ which is an extension of $p$.

(\ref{it:1.5}) is a simple consequence of ($\dagger$), and (\ref{it:2}) follows from the genericity of $G$ and the previous statements.

 \end{cproof}

 Finally, we prove that $\mc P$ is ccc and preserves Suslin trees simultaneously.\footnote{Let us mention that the abstract framework developed in \cite{heikepres} for preserving Suslin trees unfortunately does not fit the uniformization posets.}
 
 \begin{lemma}\label{lm:suslemma}
  $\mc P$ is ccc and for any Suslin tree $R$, $\force_{ \mc P}$ $\check R$ is Suslin.
 \end{lemma}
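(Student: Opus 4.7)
The plan is to prove the stronger statement that $\mc P \times R$ is ccc; this yields at once both ccc of $\mc P$ (project onto the first coordinate) and preservation of $R$ as Suslin in $V^{\mc P}$, via the standard equivalence. So suppose towards a contradiction that $\{(p_\xi,r_\xi):\xi<\omg\}$ is an uncountable antichain in $\mc P \times R$; the aim is to produce $\xi \neq \zeta$ for which the pairs are compatible.

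First, I would apply the $\Delta$-system lemma together with standard refinement to obtain: a common root $D^*$ with $p_\xi\uhr D^*=p^*$, constant size $|D^{p_\xi}|=n$, tails $D^{p_\xi}\setm D^*=\{\beta^\xi_0<\cdots<\beta^\xi_{k-1}\}$ lying above $\max D^*$, constant values $p_\xi(\beta^\xi_j)=v_j$, a uniform limit/non-limit pattern on the tails, uniform membership of tails in $C_\alpha$ for each limit $\alpha\in D^*$, pairwise distinct $(r_\xi)$, and a ``going up'' refinement so that $\max D^{p_\xi}<\min(D^{p_{\xi'}}\setm D^*)$ whenever $\xi$ precedes $\xi'$. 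Fix a countable $M\prec H(\Theta)$ containing all these parameters, set $\delta=M\cap\omg$, and pick $\zeta$ in the refined antichain with $\min(D^{p_\zeta}\setm D^*)>\delta$ (such indices form a cofinal set).

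For any $\xi\in M$, so $D^{p_\xi}\subs\delta$ and $\xi$ precedes $\zeta$ in the refinement, compatibility of $p_\xi$ with $p_\zeta$ can be analysed as follows: two applications of Observation~\ref{clm:cl}(\ref{it:maxcl}), combined with the $\mathbf C$-closure of each individual domain, show that every genuinely new point of $\ecl{D^{p_\xi}\cup D^{p_\zeta}}$ lies below $\delta$ and takes the form $\zeta''\in C_{\beta^\xi_j}\cap C_{\beta^\zeta_{j'}}$ for some limits $\beta^\xi_j,\beta^\zeta_{j'}$ in the two tails. The conditions for a common extension reduce to finitely many equations: (I) for each limit $\beta^\zeta_{j'}$ and each $\beta^\xi_j\in C_{\beta^\zeta_{j'}}$ one needs $v_j=f_{\beta^\zeta_{j'}}(\beta^\xi_j)$; and (II) for each new closure point $\zeta''$ the values $f_{\beta^\xi_j}(\zeta'')$ and $f_{\beta^\zeta_{j'}}(\zeta'')$ must coincide so that $r(\zeta'')$ is well-defined. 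Since the tails $\{\beta^\xi_j:\xi<\omg\}$ for each fixed $j$ are pairwise disjoint (by the $\Delta$-system property) and each $C_{\beta^\zeta_{j'}}$ is countable, the set $B_\zeta$ of indices $\xi$ failing (I) is countable; the analogous computation, applied to finite intersections $C_{\beta^\xi_j}\cap C_{\beta^\zeta_{j'}}$, handles (II). Consequently $G_\zeta:=\omg\setm B_\zeta$ is co-countable.

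The main obstacle is to couple the ladder-compatibility $\xi\in G_\zeta$ with the tree-compatibility $r_\xi\leq r_\zeta$. I would construct, inside $M$, an uncountable subset $W\subs\omg$ of ``universally safe'' indices such that every $\xi\in W$ is in $G_\zeta$ for every $\zeta$ of the correct $\Delta$-system type whose tail lies entirely above $M$. The definition of $W$ uses only parameters already in $M$: the fixed values $v_j$, the uniform $C_\alpha$-membership pattern for $\alpha\in D^*$, and a genericity clause requiring the tail $\{\beta^\xi_j\}$ of $\xi$ to be ``untangled'' from the family of ladders $\{C_\gamma\}$ in the precise sense dictated by (I) and (II). The uniformity of the $\Delta$-system refinement ensures $W$ is uncountable in $M$. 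Once $W\in M$ is isolated, Fact~\ref{fact:suslin}(1) applied to $\{r_\xi:\xi\in W\}\in M$ together with $r_\zeta\notin M$ supplies $\xi\in W\cap M$ with $r_\xi\leq r_\zeta$, so that $(p_\xi,r_\xi)$ and $(p_\zeta,r_\zeta)$ are compatible, contradicting the antichain hypothesis. The crux is isolating the right $W$ and verifying that it controls both (I) and (II) uniformly across all admissible $\zeta$; this is the delicate point where $\mathbf C$-closure and the choice of colouring interact most sensitively.
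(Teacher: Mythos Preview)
Your overall strategy---reduce to ccc of $\mc P\times R$, fix $\zeta$ outside a model $M$, and use Fact~\ref{fact:suslin} on a suitable set in $M$---is sound, but the construction of the ``universally safe'' set $W$ is a genuine gap, not merely a delicate detail. The difficulty is that condition~(I) cannot be controlled uniformly in $\zeta$: for a fixed tail ordinal $\beta^\xi_j$, the set of limits $\gamma>\beta^\xi_j$ with $\beta^\xi_j\in C_\gamma$ may well be uncountable, and the values $f_\gamma(\beta^\xi_j)$ are unconstrained. Hence there is in general \emph{no} $\xi$ which is safe for all admissible $\zeta$ above $M$, let alone uncountably many, and no $M$-definable genericity clause on the tail $\{\beta^\xi_j\}$ can rule this out. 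Your own closing sentence correctly flags this as the crux, but leaves it unresolved.

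The paper sidesteps this obstacle by reversing the quantifiers. Rather than seeking a $\xi\in M$ good for all outside $\zeta$, it fixes $q_\xi$ \emph{outside} the model first, then enlarges $q_\xi$ to a condition $\bar q_\xi$ whose domain already contains the finite ``conflict zone'' $E=\bigcup\{C_\alpha\cap M_n:\alpha\in\dom(q_\xi\setminus r)\cap\lim\omg\}$. Now the set of $\zeta$ whose condition is \emph{isomorphic} to $\bar q_\xi$ over the common part $\bar r$---including the requirement that the ladders and local colourings agree below $\varepsilon=\max E$---is defined from parameters in $M_n$ and contains $\xi$, hence is uncountable by elementarity. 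Fact~\ref{fact:suslin} then supplies $\zeta\in M_n$ with $t_\zeta\leq_R t_\xi$, and the isomorphism (rather than any universal safety) is exactly what makes $\bar q_\xi\cup\bar q_\zeta$ a condition. In short: you need an isomorphism-type reflection, not a $\Delta$-system thinning; the latter cannot absorb the ladder--colouring interaction.
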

\begin{cproof} We can assume that $R=(\omg,<_R)$. Suppose that $p\force \dot X\subs R, |\dot X|=\aleph_1$, and $A=\{q_\xi:\xi<\omg\}$ are conditions below $p$. We will find $\xi<\zeta$ and a common extension $q^*$ of $q_\xi$ and $q_\zeta$ so that $q^*\force \dot X$ is not an antichain in $R$.\footnote{The proof that $R$ has no uncountable chains in the extension is completely analogous.}

First, by extending each $q_\xi$, we can suppose that there are $t_\xi\in R$ of height at least $\xi$, so that $q_\xi\force t_\xi\in \dot X$.

 Now, take a continuous sequence of elementary submodels $(M_n)_{n\leq \oo}$ of a large enough $H(\Theta)$ so that $M_0$ contains all the relevant parameters (e.g. $T,\textbf C,\textbf f, \mc P, R,A,(t_\xi)_{\xi<\omg}$). Fix some $q_\xi\in A\setm M_{\oo}$, and find a large enough $n<\oo$ so that $r=q_\xi\cap M_\oo$ is a subset of $M_n$ (and so $r\in M_n$ as well). Let $$E=\bigcup\{C_\alpha\cap M_n:\alpha\in \dom (q_\xi\setm r)\cap \lim \omg\},$$ and note that $E\subs M_n$ is finite, so $E\in M_n$ and $\vareps=\max E\in M_n$ as well.

 Let $D=\ecl{\dom (q_\xi) \cup E}$ and note that $ D\setm (\vareps+1)=\dom (q\setm r)$ by Claim \ref{clm:cl}(\ref{it:maxcl}). We can extend $q_\xi$ to a condition $\bar q_\xi: D\to \oo$ in $ \mc P$, and  we let $\bar r=\bar q_\xi\cap M_n$. Note that $\bar r\in M_n$.

%
%
%

\begin{clm}
 There is $q_\zeta\in M_n\cap A$ and an extension $\bar q_\zeta\in M_n$ with $\dom \bar q_\zeta=D'$ such that 
 \begin{enumerate}
 \item\label{it:one} $\bar q_\xi \cap  \bar q_\zeta= \bar r$,
  \item\label{it:two}  the unique monotone $\psi:D\to D'$ is an isomorphism between $\bar q_\xi$ and $\bar q_\zeta$ which fixes $ D\cap  D'=\dom \bar r$,
  \item\label{it:three} if $\alpha'=\psi(\alpha)$ then $C_\alpha\cap \vareps \sqsubseteq C_{\alpha'}$ and $f_\alpha(\xi)=f_{\alpha'}(\xi)$ for all $\xi\in C_\alpha\cap \vareps$, and
  \item\label{it:four} $t_\zeta\leq_R t_\xi$.
   \end{enumerate}\end{clm}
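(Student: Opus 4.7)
The plan is a standard $\Delta$-system / elementarity argument combined with Suslinity of $R$: I want to reflect the structural data of $\bar q_\xi$ above $\bar r$ down into $M_n$, while simultaneously pulling $t_\xi$ down to some $t_\zeta\leq_R t_\xi$ inside $M_n$. First I would define a \emph{type} $\tau$ over $\bar r$ that records all the information about $\bar q_\xi$ needed to ensure conditions (2) and (3). With $\{\gamma_1<\dots<\gamma_k\}$ enumerating $D\setm \dom \bar r$ (which is an initial segment of $D$ because $D\setm(\vareps+1)=\dom(q_\xi\setm r)$ lies entirely above $\delta_\omega$), $\tau$ encodes: the integer $k$ and $\bar r$; the values $\bar q_\xi(\gamma_j)$; for each limit $\gamma_j$, the finite set $C_{\gamma_j}\cap\vareps$ together with $f_{\gamma_j}\uhr(C_{\gamma_j}\cap\vareps)$; and the $\mathbf{C}$-membership pattern, namely the indicators of $\gamma_i\in C_{\gamma_j}$ and $\rho\in C_{\gamma_j}$ for $\rho\in\dom\bar r$. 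Every piece of this data belongs to $M_n$, so $\tau\in M_n$.

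Next I would set
$$Y=\{\eta<\omg:q_\eta\in A \text{ and some }\bar q_\eta\in\mc P \text{ below }q_\eta \text{ realises }\tau \text{ over }\bar r\},$$
where ``realising $\tau$'' stipulates $\dom\bar q_\eta=\dom\bar r\cup\{\delta_1<\dots<\delta_k\}$ with $\delta_1>\vareps$, matching values and $f$-restrictions, and the same $\mathbf{C}$-pattern, so that conditions (2) and (3) hold for the monotone bijection $\psi:\gamma_j\mapsto\delta_j$ fixing $\dom\bar r$. Plainly $Y\in M_n$, and $\xi\in Y$ (witnessed by $\bar q_\xi$). The key move is to apply Fact~\ref{fact:suslin}(1) to the Suslin tree $R$, the set $Z=\{t_\eta:\eta\in Y\}\in M_n$, and the element $t_\xi\in Z\setm M_n$ (the latter because $\htt(t_\xi)\geq\xi>\delta_\omega>\delta_n$). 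This yields $\zeta\in M_n\cap Y$ with $t_\zeta\leq_R t_\xi$; by elementarity of $M_n$, the existential statement ``$\zeta\in Y$'' is witnessed inside $M_n$ by some $\bar q_\zeta\in M_n$. Then (4), (2) and (3) hold by construction, and (1) follows because $\dom\bar q_\zeta\subs M_n$ forces $\bar q_\xi\cap\bar q_\zeta\subseteq \bar q_\xi\cap M_n=\bar r$, while $\bar r\subseteq \bar q_\zeta$ is built into the type.

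The main subtlety I anticipate is ensuring that the type $\tau$ captures enough of the $\mathbf{C}$-closure combinatorics of $D$ so that any realisation $\bar q_\eta$ is automatically a legitimate condition in $\mc P$ -- in particular, $\dom\bar q_\eta$ must be $\mathbf{C}$-closed of exactly size $|D|$, rather than growing under $\mathbf{C}$-closure. This is possible because $D$ itself is $\mathbf{C}$-closed, and every pairwise intersection $C_{\gamma_i}\cap C_{\gamma_j}$ lives inside $D$ and splits into a part below $\vareps$ (which is shared with any copy in $M_n$ and hence already specified by $\bar r$) and a part among the $\gamma_l$'s (which is purely combinatorial and recorded in $\tau$). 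Elementarity of $M_n$ then realises the same finite combinatorial pattern for appropriate $\delta_j\in M_n$ above $\vareps$, completing the reflection.
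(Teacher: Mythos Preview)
Your proposal is correct and follows essentially the same approach as the paper: define the set $I$ (your $Y$) of indices $\zeta$ for which $q_\zeta$ has an extension $\bar q_\zeta$ isomorphic to $\bar q_\xi$ over $\bar r$ in the sense of conditions (2) and (3), observe $I\in M_n$ and $\xi\in I$, and then apply Fact~\ref{fact:suslin}(1) to $\{t_\zeta:\zeta\in I\}$ to pull down $t_\xi$ to some $t_\zeta\leq_R t_\xi$ with $\zeta\in M_n$. Your write-up is more explicit about what the ``type'' must record and why a realisation is automatically a $\mathbf C$-closed condition; the paper leaves this implicit. One small slip: you wrote that $D\setminus\dom\bar r$ is an \emph{initial} segment of $D$, but your own justification shows it is a \emph{final} segment (equivalently, $\dom\bar r=D\cap(\vareps+1)$ is the initial segment); this does not affect the argument.
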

   \begin{cproof}

   Indeed, $M_n$ contains the set $I\subs \omg$ of all $\zeta\in \omg$ such that $q_\zeta$ has some extension $\bar q_\zeta$ which extends $\bar r$ and is isomorphic to $\bar q_\xi$ in the above sense i.e., conditions (\ref{it:two}) and (\ref{it:three}) are satisfied. Apply Fact \ref{fact:suslin} to the set $Y=\{t_\zeta:\zeta\in I\}$ to find $\zeta\in I\cap M_n$ so that $t_\zeta<_R t_\xi$. In turn, $q_\zeta$ is as desired.

   
   \end{cproof}

   Now, we can amalgamate $\bar q_\xi$ and $\bar q_\zeta$:
   
   \begin{clm}
    $ D\cup  D'$ is $\mathbf C$-closed, and $q^*= \bar q_\xi \cup  \bar q_\zeta$ is a condition in $\mc P$ extending both $q_\xi $ and $q_\zeta$.
   \end{clm}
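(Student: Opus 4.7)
The plan is to exploit the vertical layout of $D$ and $D'$ enforced by the construction. First observe that $D \cap M_n = D \cap (\vareps+1) = \dom \bar r$: the inclusion $D \cap M_n \subseteq D \cap (\vareps+1)$ holds because $D \setminus (\vareps+1) = \dom(q_\xi \setminus r) \subseteq [\delta_\oo, \omg)$ is disjoint from $M_n \cap \omg = \delta_n$, while $\dom \bar r = \dom(\bar q_\xi \cap M_n) = D \cap M_n$ since colours lie in $\oo \subseteq M_n$. Dually, $D' \subseteq M_n$ gives $D' \subseteq \delta_n$, and by the isomorphism $\psi$ and the equality $D \cap D' = \dom \bar r$, we get $D' \setminus D \subseteq (\vareps, \delta_n)$.

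With this picture, the well-definedness of $q^* = \bar q_\xi \cup \bar q_\zeta$ is immediate from $\bar q_\xi \cap \bar q_\zeta = \bar r$, which forces agreement on $D \cap D' = \dom \bar r$. For $\mathbf C$-closedness of $D \cup D'$, pick distinct limits $\alpha, \beta \in (D \cup D') \cap \lim\omg$. If both lie in $D$ or both in $D'$, apply the $\mathbf C$-closedness of each. Otherwise, WLOG $\alpha \in D \setminus D'$ and $\beta \in D' \setminus D$, so $\alpha \geq \delta_\oo \geq \delta_n > \beta$ and hence $C_\alpha \cap C_\beta \subseteq \beta < \delta_n$; since $\alpha \in \dom(q_\xi \setminus r) \cap \lim\omg$, the definition of $E$ yields $C_\alpha \cap \delta_n \subseteq E \subseteq D$, whence $C_\alpha \cap C_\beta \subseteq D$.

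Finally, I verify condition $(\dagger)$ in both directions, and in each the relevant set turns out to be empty. For $q^* \leq \bar q_\xi$, take $\alpha \in D \cap \lim\omg$ and $\xi \in C_\alpha \cap (D' \setminus D)$: if $\alpha \leq \vareps$, then $C_\alpha \subseteq \vareps + 1$ is disjoint from $D' \setminus D \subseteq (\vareps, \delta_n)$, while if $\alpha$ lies in the top of $D$, the definition of $E$ again gives $C_\alpha \cap D' \subseteq C_\alpha \cap M_n \subseteq E \subseteq D$, so no such $\xi$ exists. For $q^* \leq \bar q_\zeta$, $\alpha \in D' \cap \lim\omg$ forces $C_\alpha \subseteq \alpha < \delta_n$, which is disjoint from $D \setminus D' \subseteq [\delta_\oo, \omg)$. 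Transitivity then gives $q^* \leq q_\xi$ and $q^* \leq q_\zeta$.

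The main obstacle is pinning down the heights of the 'top' and 'bottom' pieces of $D$ and $D'$ precisely enough; once this structural layout is established, everything follows mechanically and one sees that $E$ was included in the construction exactly to absorb $C_\alpha \cap M_n$ for the limit $\alpha$ in the top of $D$, which is what makes both the $\mathbf C$-closure argument and the $(\dagger)$-vacuity argument succeed.
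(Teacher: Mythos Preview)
Your proof is correct, and in the $\mathbf C$-closedness step it is actually cleaner than the paper's. The paper argues by splitting on whether $\alpha=\psi(\beta)$ and invoking condition~(3) from the previous claim (the initial-segment relation $C_\beta\cap\vareps\sqsubseteq C_{\psi(\beta)}$) to route $C_\alpha\cap C_\beta$ into $D'$ in the second subcase. You instead observe uniformly that whenever the high element $\alpha$ lies in $D\setminus D'=\dom(q_\xi\setminus r)$, one has $C_\alpha\cap C_\beta\subseteq C_\alpha\cap\delta_n\subseteq E\subseteq D$ straight from the definition of $E$, so the isomorphism property~(3) is not needed at all for this claim. For the extension condition~$(\dagger)$ the paper is terse, only remarking on the case $\alpha\in D\setminus M_n$, whereas you verify both directions and both height cases explicitly and show the constraints are vacuous; this is the same underlying idea, just carried out in full. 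Both arguments rest on the single structural point you identify at the end: $E$ was designed precisely to absorb $C_\alpha\cap M_n$ for limit $\alpha$ in the top block of $D$.
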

\begin{cproof}
First, lets see why $ D\cup  D'$ is $\mathbf C$-closed: let $\alpha< \beta\in  D\cup  D'$, and note that the interesting cases are when $\beta \in D\setm M_n$ and  $\alpha \in D'\setm D$. We distiguish two cases: first, assume that $\alpha =\psi(\beta)$. In this case, $C_\beta\cap M_n \subs C_\alpha$, so any $\xi\in C_\alpha\cap C_\beta$ is in $E$ actually.

Second, if  $\alpha\neq \alpha'=\psi(\beta)$ then $C_\alpha\cap C_\beta=C_\alpha\cap C_{\alpha'}$ and the latter is a subset of $D$.

    Finally, the definition of $E$ and the choice of $\bar q_\zeta$ made sure that no element of $\dom q^*\setm \dom \bar q_\xi$ is at critical levels given by $C_\alpha$ for some $\alpha\in  D\setm M_n$. 

\end{cproof}

Hence, $q^*$ is a common extension of both $q_\zeta$ and $q_\xi$, forcing that $\dot X$ is not an antichain.

\end{cproof}

Finally, the theorem is proved.


%
%

 
\end{tproof}

\textbf{Remark.} First, one can also prove that an arbitrary finite support iteration of posets of the form $\mc P_{\textbf f}$ preserve Suslin trees, and so we can show the above consistency result together with arbitrary large values for the continuum. 
\medskip

Second, let us  sketch an argument that the forcing from \cite[Chapter II, Theorem 4.3]{shelah2017proper} (see at the beginning of the section) also preserves Suslin trees. This is a non-essential modification of Lemma \ref{lm:suslemma}. First, recall that each condition $q$ is now defined on a set of the form $\bigcup_{\alpha\in I_q}C_\alpha$ where $I_q$ is finite. Now, suppose that $p\force \dot X$ is an uncountable subset of a Suslin tree $R=(\omg,<_R)$. For each $\xi<\omg$, we take $q_\xi$ below $p$ which forces that $t_\xi\in \dot X$ for some $t_\xi\geq \xi$. Take models $(M_n)_{n\leq \oo}$ as before and fix some $q_\xi\notin M_\oo$. Find $n$ large enough so that $C_\alpha\cap M_\oo\subset M_n$ if $\alpha\in I_{q_\xi}\setm \{M_\oo\cap \omg\}$. Let $r=q_\xi\uhr M_n$ and note that $r\in M_n$. Now, we can look at the set $I\subset \omg$ so that $\zeta\in I$ implies that $r\subset q_\zeta$ and $q_\xi$ and $q_\zeta$ are isomorphic. Apply Fact \ref{fact:suslin} to the set $Y=\{t_\zeta:\zeta\in I\}$ to find $\zeta\in I\cap M_n$ so that $t_\zeta<_R t_\xi$. Note that $q_\zeta\cup q_\xi$  is a common extension of $q_\zeta$ and $q_\xi$ which forces that $\dot X$ is not an antichain.

\section{Uniformization results in ZFC}\label{sec:zfc}

The goal of this section is to present positive uniformization results, provable in ZFC, for some trees of height $\omg$. Our previous results show that we must use trees with uncountable levels. We will mostly work with $\sqt$, the set of all well-ordered, bounded sequences of rational number ordered by end extension. $\sqt$ is a tree of height $\omg$ but contains no uncountable chains (witnessed by the map $t\mapsto \sup_{\mb R}(t)$). Moreover,  $\sqt$ is non-special \cite{stevotrees}.
There is a notable special subtree of $\sqt$, denoted  by $\ssqt$, namely the set of those $t\in \sqt$ which have a maximum. This tree was used by Duro Kurepa to construct the first (special) Aronszajn tree \cite{kunen}.



\medskip

We start by stating our results and then proceed with the proofs.



\begin{thm}\label{thm:Qunif}There is $h:\ssqt\to \oo$ so that for any ladder system $\oo$-colouring $\mathbf f$, there is a (necessarily) special Aronszajn tree $T\subs \ssqt$ so that $h\uhr T$ uniformizes $\mathbf f$.
\end{thm}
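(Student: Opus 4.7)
My plan is to define $h$ universally from a dense partition of $\mathbb{Q}$ and then, given $\mathbf C$ and $\mathbf f$, construct $T$ by recursion on levels.

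Partition $\mathbb{Q}=\bigsqcup_{n<\omega}Q_n$ into pairwise disjoint dense subsets, let $e\colon\mathbb{Q}\to\omega$ be defined by $e^{-1}(n)=Q_n$, and set $h(t)=e(\max t)$ for $t\in\ssqt$. This $h$ is fixed once and for all, independent of any ladder system. Given $\mathbf C$ and an $\omega$-colouring $\mathbf f$, I seek $T\subseteq\ssqt$ with countable levels so that each $s\in T$ of limit height $\alpha$ has $e(q_\xi(s))=f_\alpha(\xi)$ for almost all $\xi\in C_\alpha$, where $q_\xi(s)$ is the $\xi$-th element of the well-ordered set $s$; then $h\uhr T$ is automatically a $T$-uniformization of $\mathbf f$.

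Build $T$ level-by-level. At level zero include all singletons; at successor levels, include all one-rational extensions, keeping levels countable. At a limit level $\xi$, for each triple $(t,n,r)$ with $t\in T_{<\xi}$, $n<\omega$, and rational $r>\max t$, include one representative $s^\xi_{t,n,r}\in\ssqt_\xi$ extending $t$ with $\max s^\xi_{t,n,r}\in Q_n\cap(\sup t,r)$, satisfying uniformization at $\xi$, and with every limit-height restriction already lying in $T$. Only countably many triples appear, so $T_\xi$ stays countable.

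The substance is the construction of $s^\xi_{t,n,r}$: build a branch from $t$ to level $\xi$ processing heights in increasing order, using a fusion sequence of rational bounds $r_0<r_1<\cdots\nearrow r$ for boundedness. At successor heights $\eta$ pick the next rational in $Q_{f_\xi(\eta)}\cap(\text{top},r_k)$ when $\eta\in C_\xi$, otherwise in $Q_0\cap(\text{top},r_k)$; density of each $Q_m$ makes this possible. At each intermediate limit height $\gamma$ plug in a previously constructed $s^\gamma_{t',n',r_k}\in T_\gamma$, where $t'$ is the current top taken at a successor level strictly above the (finite) set $C_\xi\cap\gamma$, and $n'=f_\xi(\gamma)$ if $\gamma\in C_\xi$. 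Since the branch-portion dictated by $s^\gamma_{t',n',r_k}$ lies strictly above $\max(C_\xi\cap\gamma)$, it contributes no $C_\xi$-relevant heights and so cannot violate uniformization at $\xi$; moreover the restriction of the branch to $\gamma$ is exactly $s^\gamma_{t',n',r_k}$, witnessing downward closure.

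The main obstacle is dealing with limit-of-limit heights $\gamma=\sup_\iota \gamma_\iota$ in the branch, where no single ``current top'' at a successor level is available. The resolution is to take $t'$ to be a previously plugged-in $s^{\gamma_{\iota'}}_{\cdot,\cdot,\cdot}$ at an index $\iota'<\iota$ for which $\gamma_{\iota'}>\max(C_\xi\cap\gamma)$ (such an $\iota'$ exists because $C_\xi\cap\gamma$ is finite and $\gamma_{\iota'}$ is cofinal in $\gamma$); one then needs an inner coherence argument, using a canonical well-ordering of $\ssqt$ to pin down the $s^\gamma$'s, to check that the restrictions of $s^{\gamma_\iota}_{t',n',r_k}$ to earlier $\gamma_{\iota''}$-levels agree with the branch already built. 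Finally, $T$ has countable levels by construction, no uncountable chain via the projection $t\mapsto\sup_{\mathbb R}(t)\in\mathbb R$, and is pruned since $s^\alpha_{t,0,r}$ extends any $t$ to every limit $\alpha>\htt(t)$ (successor levels are full by construction). Hence $T$ is a special Aronszajn subtree of $\ssqt$ and $h\uhr T$ uniformizes $\mathbf f$.
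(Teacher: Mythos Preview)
Your choice of $h(t)=e(\max t)$ from a dense partition $\mathbb Q=\bigsqcup_n Q_n$ is excellent and in fact simpler than what the paper does: the paper introduces an auxiliary notion of a \emph{flush} map and builds one by a transfinite induction of length $\omega_1$ where continuum-many requirements are handled at each limit level. Your $h$ is flush automatically, so this whole preliminary construction is bypassed.

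Where your argument has a genuine gap is in the construction of the branch $s^\xi_{t,n,r}$ at limit-of-limit heights. You write that one ``plugs in'' a previously built $s^\gamma_{t',n',r_k}$ at each intermediate limit $\gamma$, and then for $\gamma=\sup_\iota\gamma_\iota$ you appeal to an ``inner coherence argument using a canonical well-ordering'' to see that the branch you are building toward $\xi$ agrees, between $t'$ and $\gamma$, with the branch that $s^\gamma_{t',n',r'}$ took when \emph{it} was built. But $s^\gamma_{t',n',r'}$ was constructed with respect to $C_\gamma$ and $f_\gamma$, not $C_\xi$ and $f_\xi$; even if there are no $C_\xi$-constraints above $t'$, the intermediate plug-ins inside $s^\gamma$ were governed by a different ladder and different fusion bounds, and you have not shown that your current branch coincides with that path. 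A canonical well-ordering does not by itself force these two independently specified constructions to agree.

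The clean fix---and this is essentially the paper's approach---is to isolate the invariant your construction already secures but never uses: for every $s\in T_{<\alpha}$, every level $\delta<\alpha$, every $n<\omega$ and every rational $r>\max s$, there is $t\in T_\delta$ above $s$ with $\max t\in Q_n\cap(\max s,r)$. (Your full successor levels and your choice of $s^\gamma_{t,n,r}$ for \emph{all} triples guarantee this.) With this richness in hand you can build $s^\xi_{t,n,r}$ in $\omega$ steps along $C_\xi$: at step $k$ you are at some $t_k\in T_{<\xi}$, and richness hands you $t_{k+1}\in T_{\xi_{k+1}}$ (where $\xi_{k+1}$ is the next element of $C_\xi$) with $\max t_{k+1}\in Q_{f_\xi(\xi_{k+1})}\cap(\max t_k,r_{k+1})$. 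Since each $t_{k+1}$ is already in $T$, downward closure takes care of \emph{all} intermediate limit levels at once, and no coherence bookkeeping is needed. A minor point: your fusion bounds should converge to some $r'<r$ (say $r'=(\max t+r)/2$) so that there is room for the final rational in $Q_n\cap(r',r)$.
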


\begin{thm}\label{thm:Q*unif}There is  $h:\sqt\to \oo$ so that for any ladder system $\oo$-colouring $\mathbf f$, there is a non-special tree $T\subs \sqt$ so that $h\uhr T$ uniformizes  $\mathbf f$.
\end{thm}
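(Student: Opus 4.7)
The plan is to parallel the construction of Theorem~\ref{thm:Qunif} while routing branches through the limit-level nodes of $\sqt\setminus\ssqt$ so that the resulting $T$ is non-special. I would fix a surjection $\phi:\mathbb Q\to\oo$ every fiber of which is dense in $\mathbb Q$, and for each countable limit $\xi<\omg$ a canonical cofinal sequence $(\xi_n)_{n<\oo}\subs\xi$. Then define $h:\sqt\to\oo$ by $h(t)=\phi(\max t)$ when $t$ has a maximum, and $h(t)=\phi(t_{\xi_0})$ when $\htt(t)=\xi$ is a limit, where $t_{\xi_0}$ denotes the entry of $t$ at ordinal index $\xi_0$. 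In both cases $h(t)$ is $\phi$ of one designated rational entry of $t$, so by choosing that rational freely at construction time we can prescribe any value of $h(t)$.

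Given a ladder system colouring $\mathbf f$, I would build $T\subseteq\sqt$ by recursion on countable ordinals, maintaining the inductive hypothesis that $T_{<\alpha}$ is downward closed and pruned, that $h\uhr T_{<\alpha}$ already uniformizes $\mathbf f$ at every limit below $\alpha$, and that every $s\in T_{<\alpha}$ has uncountably many incomparable extensions in $T$ at each later level. At a successor stage $\beta+1$, add into $T_{\beta+1}$ sufficiently many one-rational extensions of each $s\in T_\beta$. At a limit $\alpha$, for every $s\in T_{<\alpha}$ I would produce a chain extending $s$ to level $\alpha$ in $T$ whose members satisfy $h(t\uhr\xi)=f_\alpha(\xi)$ for cofinitely many $\xi\in C_\alpha$. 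Since $[\htt(s),\alpha)$ is countable and each constraint $h(t\uhr\xi)=f_\alpha(\xi)$ amounts to picking a rational from the dense set $\phi^{-1}(f_\alpha(\xi))$, a back-and-forth enumeration inside a bounded rational interval above $\sup s$ produces such a chain, and in fact continuum many of them with pairwise distinct ``designated'' rationals; thus $T_\alpha$ above $s$ is uncountable and includes limit-type nodes from $\sqt\setminus\ssqt$.

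For non-specialness, I would argue that the construction preserves the combinatorial feature responsible for $\sqt$ being non-special. Concretely, at each limit $\alpha$ above $\oo$ the level $T_\alpha$ contains an uncountable family of pairwise incomparable limit-type nodes whose designated entries are dense in $\mathbb Q$; mimicking the standard proof that $\sqt$ is non-special, any putative decomposition of $T$ into countably many antichains would, after extracting an uncountable monochromatic subfamily and applying the density of the designated entries, yield an incomparable pair forced to be comparable by a compatibility argument in $\mathbb Q$, a contradiction.

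The main obstacle is coordinating the $h$-commitments across distinct limits $\alpha$ sharing a critical level $\xi$: once $s\in T_\xi$ enters $T$, $h(s)$ is permanently fixed, so if $\xi\in C_{\alpha_1}\cap C_{\alpha_2}$ with $f_{\alpha_1}(\xi)\neq f_{\alpha_2}(\xi)$ then the branches targeting $\alpha_1$ and $\alpha_2$ must be routed through distinct nodes of $T_\xi$. The uncountability of $T_\xi$ and the continuum-many admissible chain choices through each node leave room for this, and a careful enumeration of all pairs $(s,\alpha)$ of order type $\omg$ performs the routing. The delicate point is maintaining the inductive hypothesis through the limit stage, i.e.\ verifying that after all commitments at $\alpha$ the uncountability of extensions through each remaining node is preserved; since each commitment at a limit constrains the designated rational only at countably many levels, and each constraint is satisfied by a dense subset of $\mathbb Q$, an uncountable family of admissible extensions survives at every stage, which closes the construction and simultaneously yields the non-special tree $T$ and the $\mathbf f$-uniformization by $h\uhr T$.
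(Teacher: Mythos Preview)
Your proposal has a genuine gap in the non-specialness argument. The phrase ``mimicking the standard proof that $\sqt$ is non-special'' hides the real difficulty: subtrees of $\sqt$ can easily be special (indeed $\ssqt\subs\sqt$ is special), so nothing about the ambient tree guarantees that the $T$ you build is non-special. Your claimed mechanism --- an uncountable family of limit-type nodes at each limit level with dense designated rationals, followed by ``a compatibility argument in $\mathbb Q$'' --- does not produce comparable $g$-monochromatic nodes; having uncountable levels and limit-type nodes is simply not an obstruction to specialness. You have no step in the construction that actively defeats a putative specializing map $g:T\to\oo$, and without one the argument cannot close.

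The paper's proof addresses exactly this point, and it is the heart of the argument. At each limit level $\alpha$ one enumerates so-called \emph{$\alpha$-control tuples} $(A,s,g,i)$, where $A\subs T_{<\alpha}$ is countable and $g:A\to\oo$ is a candidate restriction of a specializing map. For each such tuple an $\alpha$-bramble is built inside $A$ whose branches are designed so that either (case $i=0$) every large colour $n$ of $g$ is hit along each branch, or (case $i=1$) above each branch $g^{-1}(n)$ becomes dense-or-empty. The flush property of $h$ is then used to select a branch $b_\nu$ and a node $t_\nu=\cup b_\nu\in T_\alpha$ with a prescribed $h$-value. Non-specialness is verified at the end by taking an elementary chain $(M_\ell)_{\ell\le\oo}$ containing $g,T,h$, setting $A=T\cap M_\oo$, and arguing that $(A,s,g\uhr A,0)$ was a control tuple handled at level $\alpha=M_\oo\cap\omg$; the resulting $t_\nu$ then satisfies $g(t_\nu)=g(t_\nu\uhr\vareps)$ for some $\vareps<\alpha$. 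This diagonalization against all specializing maps is the missing idea in your sketch, and it is not recoverable from density considerations in $\mathbb Q$ alone.
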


That is, for both trees $\ssqt$ and $\sqt$, there is a single master colouring which witnesses that any ladder system colouring has a $\ssqt$ or $\sqt$-uniformization, respectively. In fact, we isolate a relatively simple combinatorial assumption on functions $h:\sqt\to \oo$ that guarantees $h$ to witness the theorems.

\begin{cor} 
$\unif{\oo}{\ssqt}{\mathbf C}$ and $\unif{\oo}{\sqt}{\mathbf C}$ both hold for any ladder system $\mathbf C$.
\end{cor}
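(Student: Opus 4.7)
The plan is to observe that the corollary is an immediate consequence of Theorems \ref{thm:Qunif} and \ref{thm:Q*unif}, essentially by unpacking Definition \ref{dfn:Tunif} and the abbreviation $\unif{\oo}{T}{\mathbf C}$. Indeed, $\unif{\oo}{\ssqt}{\mathbf C}$ just says that every $\oo$-colouring of $\mathbf C$ admits a $\ssqt$-uniformization, and the latter is, by definition, a function whose domain is a subtree of $\ssqt$ and which agrees with $f_\alpha$ almost everywhere on $C_\alpha$ along each branch of limit height.

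First I would fix an arbitrary ladder system $\mathbf C$ and an arbitrary $\oo$-colouring $\mathbf f$ of $\mathbf C$. Then I would apply Theorem \ref{thm:Qunif} to obtain the master colouring $h:\ssqt\to\oo$ together with a (special Aronszajn) subtree $T\subs \ssqt$ such that $\varphi:=h\uhr T$ witnesses that $\varphi(s\uhr\xi)=f_\alpha(\xi)$ for all $s\in T_\alpha$ with $\alpha\in \lim\omg$ and almost all $\xi\in C_\alpha$. By the very definition, $\varphi$ is a $\ssqt$-uniformization of $\mathbf f$ with $\dom \varphi = T$. Since $\mathbf f$ was arbitrary, we obtain $\unif{\oo}{\ssqt}{\mathbf C}$, and since $\mathbf C$ was arbitrary, this holds for every ladder system.

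The same argument works verbatim for $\sqt$, using Theorem \ref{thm:Q*unif} in place of Theorem \ref{thm:Qunif}: the only difference is that the resulting subtree $T\subs \sqt$ on which $h\uhr T$ is defined is non-special rather than special Aronszajn, but this distinction is irrelevant for the uniformization property itself. There is no real obstacle here; the content of the corollary is entirely packaged into the two preceding theorems, and the corollary just rephrases the conclusion of those theorems in the notation $\unif{\oo}{\cdot}{\cdot}$ introduced in the introduction. The only thing worth emphasising is the striking feature already noted in the theorems, namely that a \emph{single} function $h$ serves as the uniformization for \emph{all} ladder system colourings simultaneously (after restricting to a colouring-dependent subtree).
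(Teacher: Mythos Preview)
Your proposal is correct and matches the paper's approach: the corollary is stated without proof in the paper, as it is an immediate consequence of Theorems~\ref{thm:Qunif} and~\ref{thm:Q*unif} by unpacking the definition of $\unif{\oo}{T}{\mathbf C}$, exactly as you have done.
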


Let us mention that, unlike $\ssqt$ or $\sqt$, no $\aleph_1$-tree can have a single colouring  that uniformizes all ladder system colourings at once.

\begin{prop}
Suppose that $T$ is an $\aleph_1$-tree and $h:T\to \oo$. Then for any ladder system $\mathbf C$, there is a monochromatic 2-colouring of $\mathbf C$ so that $h\uhr S$ is not a  $T$-uniformization of $\mathbf f$ whenever $S\subs T$ is pruned.
\end{prop}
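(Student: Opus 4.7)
For each limit $\alpha\in\lim\omg$ and $i\in\{0,1\}$, set
\[
T^i_\alpha=\{t\in T_\alpha : h(t\uhr\xi)=i\text{ for almost all }\xi\in C_\alpha\}.
\]
Since the uniformization condition for a monochromatic colouring with $f_\alpha\equiv i_\alpha$ says exactly that $S_\alpha\subs T^{i_\alpha}_\alpha$ for every limit $\alpha$, the plan is to choose $(i_\alpha)_{\alpha\in\lim\omg}$ so that this containment fails at some level $\alpha$ for every pruned $S\subs T$.

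If at some limit $\alpha^*$ one of $T^0_{\alpha^*}, T^1_{\alpha^*}$ is empty, I set $i_{\alpha^*}$ to point to the empty side (and fill in the other $i_\alpha$'s arbitrarily); any pruned $S$ has $S_{\alpha^*}\neq\emptyset$, so $S_{\alpha^*}\subs T^{i_{\alpha^*}}_{\alpha^*}=\emptyset$ is impossible, and we are done.

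Otherwise both $T^0_\alpha$ and $T^1_\alpha$ are non-empty at every limit $\alpha$, and the core of the proof is the following key lemma: for every $s\in T$ there exist a limit $\alpha>\htt(s)$ and an $i\in\{0,1\}$ with $T^i_\alpha\cap s^\uparrow=\emptyset$. Granting this, I enumerate $T=\{s_\xi : \xi<\omg\}$ compatibly with levels and recursively pick, for each $\xi$, a fresh limit $\alpha_\xi>\htt(s_\xi)$ (using $|T|=|\lim\omg|=\aleph_1$ to keep the assignment injective) together with an $i_\xi\in\{0,1\}$ provided by the lemma for $s_\xi$; set $i_{\alpha_\xi}:=i_\xi$ (and other $i_\alpha$'s arbitrarily). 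For any pruned $S$ and any $s=s_\xi\in S$, pruning forces $S_{\alpha_\xi}\cap s^\uparrow\neq\emptyset$, while $S_{\alpha_\xi}\subs T^{i_\xi}_{\alpha_\xi}$ together with $T^{i_\xi}_{\alpha_\xi}\cap s^\uparrow=\emptyset$ yields the contradiction we want.

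To prove the key lemma I argue by contradiction: if some $s_0\in T$ had $T^0_\alpha\cap s_0^\uparrow\neq\emptyset$ and $T^1_\alpha\cap s_0^\uparrow\neq\emptyset$ at every limit $\alpha>\htt(s_0)$, the aim is to derive $|T_{\alpha^*}|\geq 2^{\aleph_0}$ for some countable limit $\alpha^*$ — contradicting the fact that $T$ has countable levels. The strategy is to fix a cofinal $\omega$-sequence of limits $\alpha_0<\alpha_1<\dotsb$ with $\alpha^*=\sup_n\alpha_n<\omg$ and, for each $\vec j\in 2^\omega$, to recursively pick an ascending chain of extensions of $s_0$ whose $\alpha_n$-th term lies in $T^{j_n}_{\alpha_n}$, producing distinct $u_{\vec j}\in T_{\alpha^*}\cap s_0^\uparrow$ (since the $u_{\vec j}$'s diverge at the level of first disagreement between two sequences $\vec j$). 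The main obstacle is that the two-color branching hypothesis is assumed only for $s_0$ and not for its descendants, so this recursive selection is not immediately available; the fix is to pass to the maximal sub-tree $R\subs T\cap(s_0^\uparrow\cup\{s_0\})$ obtained by iteratively removing nodes that fail the two-color branching property within the current tree, and to argue that $R$ must still contain $s_0$ (and hence supports the recursive branching) — or else the iteration terminates at some finite stage, at which point we read off, directly from the stage at which $s_0$ was removed, a limit $\alpha$ and an $i$ witnessing the key lemma for $s_0$ after all.
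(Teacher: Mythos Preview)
Your key lemma is false. Fix any ladder system $\mathbf C$ and build an $\aleph_1$-tree $T$ together with $h:T\to 2$ by induction on levels so that (i) $h\uhr T_{<\alpha}$ is \emph{rich}, meaning that above every node and at every higher level both $h$-values occur; and (ii) at each limit $\alpha$, for every $s\in T_{<\alpha}$ and each $i\in 2$, use richness to thread a cofinal branch above $s$ along which $h$ takes the value $i$ at almost every level in $C_\alpha$, and place its top into $T_\alpha$. Then for every $s\in T$, every limit $\alpha>\htt(s)$ and each $i$, the set $T^i_\alpha\cap s^\uparrow$ is non-empty, so your lemma fails at every node. This is essentially a stripped-down, single-map version of the sealing construction behind Theorem~\ref{thm:dmplus}.

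The gap in your argument for the lemma is the passage from $2^{\aleph_0}$ cofinal branches through $T_{<\alpha^*}$ to $2^{\aleph_0}$ elements of $T_{\alpha^*}$: an $\aleph_1$-tree need not extend cofinal branches to the next level, and in the tree just described only countably many of your branches have upper bounds in $T_{\alpha^*}$. Your iterative-removal fix does not rescue this: in the counterexample the two-colour branching property holds at every node, so nothing is ever removed, $R$ is the full cone above $s_0$, and the same problem remains. (There is a secondary issue even granting the lemma: it yields only \emph{some} $\alpha$ for each $s$, not unboundedly many, so the injectivity of $\xi\mapsto\alpha_\xi$ is unjustified.)

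The paper's proof avoids all of this with a direct diagonalisation at a single level. Pick limits $\alpha_0<\alpha_1<\dotsb$ with supremum $\alpha_\omega$, enumerate the countable level $T_{\alpha_\omega}$ as $\{t_n:n<\omega\}$, and set $f_{\alpha_n}\equiv 0$ iff $h(t_n\uhr\xi)=1$ for infinitely many $\xi\in C_{\alpha_n}$. Either way $h$ disagrees with $f_{\alpha_n}$ infinitely often along $C_{\alpha_n}$ below $t_n$, so $t_n\uhr\alpha_n$ cannot lie in any uniformising $S$; hence $S_{\alpha_\omega}=\emptyset$, contradicting prunedness. Countability of a single level is all that is used.
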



\begin{cproof}
Take any increasing sequence of limit ordinals $(\alpha_n)_{n<\oo}$ with supremum $\alpha_\oo$. List $T_\alpha$ as $(t_n)_{n\in \oo}$ and define $f_{\alpha_n}$ to be constant 0 if and only if there are infinitely many $\xi\in C_{\alpha_n}$ so that $h(t_n\uhr \xi)=1$. One can immediately check that if $h\uhr S$ uniformizes $\mathbf f$ for some downward closed subset $S\subs T$ then $t_n\uhr \alpha_n\notin S$ for all $n<\oo$. In turn,  $S\cap T_{\alpha_\oo}=\emptyset$, a contradiction.
\end{cproof}

Let us present the proofs of the theorems now, starting by some preliminaries.

\medskip

Throughout the rest of this section we only work with subtrees of $\sqt$ and $\ssqt$; hence, we assume the tree to be downward closed and pruned. Now, we say that such a tree $T$ (of possibly countable height) is \emph{strongly pruned} if for any $\vareps<\delta<ht(T)$ and   $s\in T_\vareps$ there are  $t\in T_\delta$ extending $s$ with $|\sup_{\mb R}(t)-\sup_{\mb R}(s)|$ arbitrary small. In case of $T\subs \ssqt$, we could have used $\max_{\mb Q}$ instead of the supremum. For example, the first $\alpha$ levels $(\ssqt)_{<\alpha}$ of $\ssqt$ is a strongly pruned subtree for any $\alpha<\omg$. 


Being strongly pruned allows the extension of a countable tree with an extra level (while preserving being strongly pruned).

\begin{obs}
Suppose that $T\subset \sqt$ is a subtree of countable limit height $\alpha$. Then for any $s\in T$ and $j<\oo$, there is a $t\in \sqt$ of height $\alpha$ so that $s<t$, $t^\downarrow\subset T$ and $|\sup_{\mb R}s-\sup_{\mb R}t|\leq \frac{1}{j}$.
\end{obs}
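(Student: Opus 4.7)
The plan is to build $t$ as the union of a carefully-chosen countable $<$-chain in $T$ above $s$, with controlled growth of the suprema. This requires $T$ to be strongly pruned, which is the standing assumption of this section. First, I fix a strictly increasing sequence $(\alpha_n)_{n<\omega}$ cofinal in $\alpha$ with $\alpha_0=\htt(s)$, and a sequence of positive rationals $(\epsilon_n)_{n<\omega}$ with $\sum_n \epsilon_n < 1/j$.

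Next, using the strong pruning property of $T$, I recursively pick $s_0 < s_1 < s_2 < \cdots$ in $T$ with $s_0=s$, $s_n\in T_{\alpha_n}$, and $|\sup_{\mb R} s_{n+1}-\sup_{\mb R} s_n|<\epsilon_n$ at each stage. Setting $t=\bigcup_{n<\omega}s_n$, viewed as a subset of $\mb Q$, gives a well-ordered subset of $\mb Q$ of order type $\sup_n\alpha_n=\alpha$, since the $s_n$ form an end-extension chain.

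It remains to verify the three properties. Boundedness of $t$ follows from $\sup_{\mb R} t=\lim_n \sup_{\mb R} s_n$ and the telescoping bound $|\sup_{\mb R} t-\sup_{\mb R} s|\leq \sum_n \epsilon_n < 1/j$; so $t\in\sqt$, has height $\alpha$, extends $s$, and meets the required distance bound. For $t^\downarrow\subseteq T$, note that any proper initial segment of $t$ has ordinal height some $\beta<\alpha$; choosing $n$ with $\alpha_n>\beta$, this segment equals $s_n\uhr\beta$, and hence lies in $T$ by downward closure.

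There is no real obstacle here: the only place the hypothesis on $T$ is used is in the recursive step, where strong pruning furnishes $s_{n+1}$ with the desired small sup-increment. Cofinality of $(\alpha_n)$ in $\alpha$ guarantees that every proper initial segment of $t$ is captured as an initial segment of some $s_n$, and the summability of $(\epsilon_n)$ yields the distance estimate. In fact the argument also shows that $t$ can be chosen so that $t^\downarrow\cup\{t\}$ is a strongly pruned extension of $T$, which is the form in which the observation will be iterated in later constructions.
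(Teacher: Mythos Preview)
Your proof is correct and is precisely the standard argument the paper has in mind; the paper itself states this observation without proof, simply prefacing it with the sentence ``Being strongly pruned allows the extension of a countable tree with an extra level.'' One minor point: strongly pruned is not literally the standing assumption of the section (the convention is only that subtrees are downward closed and pruned), but you are right that the observation tacitly uses it via the preceding sentence, and your final remark about $t^\downarrow\cup\{t\}$ being a strongly pruned extension of $T$ is not quite what you mean---a single new node cannot make the extension strongly pruned; rather, iterating the construction over all pairs $(s,j)$ yields a strongly pruned level $T_\alpha$.
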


From this, one readily constructs Aronszajn subtrees of $\sqt$ \cite{kunen}. Now, our goal is to define uniformizations i.e., to inductively extend a tree and a colouring at the same time. This motivates the next definition.

\begin{dfn}
We say that a colouring $h:T\to \oo$ is \textit{rich} if for any $\vareps<\delta<\htt(T)$, $s\in T_\vareps$ and $n\in \oo$ there are  $t\in T_\delta$ extending $s$ with $h(t)=n$ and $|\sup_{\mb R}(t)-\sup_{\mb R}(s)|$ arbitrary small.
\end{dfn} 

In turn, rich maps must be defined on strongly pruned trees. The following observation should make it clear how this definition will be useful.

\begin{obs}
Suppose $T$ is a subtree of $\sqt$ of countable limit height $\alpha<\omg$, $s\in T$ and $h:T\to \oo$ is rich. If $f_\alpha:C_\alpha\to \oo$ for some cofinal, type $\oo$ subset $C_\alpha$ of $\alpha$ then there is a $t\in \sqt$ above $s$  so that
\begin{enumerate}
    \item $t^\downarrow\subs T$, 
    \item $|\sup_{\mb R}(t)-\sup_{\mb R}(s)|$ is arbitrary small and 
    \item $h(t\uhr \xi)=f_\alpha(\xi)$ for almost all $\xi\in C_\alpha$. 
\end{enumerate}
\end{obs}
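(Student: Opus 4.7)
The plan is to construct the branch $t$ one level at a time along $C_\alpha$, using the richness of $h$ to simultaneously prescribe the tree extension and match the colour $f_\alpha(\xi)$ at each relevant level, while keeping the successive jumps in $\sup_{\mb R}$ summable so that $t$ stays bounded in $\mb R$.

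More concretely, I would first fix some $\vareps>0$ as the target for $|\sup_{\mb R}(t)-\sup_{\mb R}(s)|$, enumerate $C_\alpha$ in increasing order as $(\xi_n)_{n<\oo}$, and choose $n_0<\oo$ with $\xi_{n_0}>\htt(s)$. Setting $s_{-1}=s$, I would then inductively build $s_0<s_1<\dots$ in $T$ with $\htt(s_n)=\xi_{n_0+n}$ such that
$$h(s_n)=f_\alpha(\xi_{n_0+n}) \quad\text{and}\quad |\sup_{\mb R}(s_n)-\sup_{\mb R}(s_{n-1})|<\vareps/2^{n+1}.$$
The existence of each $s_n$ is exactly what the richness of $h$ provides: given $s_{n-1}\in T$ at height $\xi_{n_0+n-1}$ (or $s$ when $n=0$) and the target height $\xi_{n_0+n}<\alpha=\htt(T)$, richness yields an extension in $T$ at the desired level with any prescribed colour and an arbitrarily small increment in the real supremum.

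Finally, I would set $t=\bigcup_{n\geq 0} s_n$. Because $(s_n)$ is an end-extension chain in $\sqt$ and $\sup_n\xi_{n_0+n}=\alpha$ (as $C_\alpha$ is cofinal in $\alpha$), $t$ is a well-ordered subset of $\mb Q$ of order type $\alpha$; the telescoping bound
$$|\sup_{\mb R}(t)-\sup_{\mb R}(s)|\leq \sum_{n\geq 0}\vareps/2^{n+1}\leq \vareps$$
shows that $t$ is bounded in $\mb R$, hence $t\in\sqt$ at level $\alpha$, and it also gives clause (2). Since every $\xi<\alpha$ is dominated by some $\xi_{n_0+n}$, we have $t\uhr\xi=s_n\uhr\xi$, so downward closure of $T$ yields $t^\downarrow\subs T$, which is (1). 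For (3), the construction guarantees $h(t\uhr\xi)=f_\alpha(\xi)$ for every $\xi\in C_\alpha$ with $\xi\geq \xi_{n_0}$, so the equality fails on at most the first $n_0$ elements of $C_\alpha$.

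I do not expect any genuine obstacle here: the construction is a straightforward recursion and the only nontrivial point is keeping $t$ bounded in $\mb R$, which is precisely what the ``arbitrary small'' clause in the definition of richness is designed to handle. In effect, this observation is simply the strong-pruning extension lemma, upgraded so that the prescribed colour values along the new branch come for free from richness.
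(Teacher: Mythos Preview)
Your proof is correct and follows essentially the same route as the paper: build a chain $s<t_0<t_1<\dots$ through $T$ along the tail of $C_\alpha$ above $\htt(s)$, using richness at each step to hit the prescribed colour while keeping the real supremum under control, and take the union. The only cosmetic difference is that you bound the increments geometrically and telescope, whereas the paper simply maintains $|\sup_{\mb R}(t_k)-\sup_{\mb R}(s)|<\frac{1}{j}$ throughout (which works because $\sup_{\mb R}$ is monotone along branches of $\sqt$); your bookkeeping is slightly more explicit but not a genuinely different argument.
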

\begin{cproof}
Indeed, fix some $j<\oo$ and simply define $s<t_0<t_1<\dots $ in $T$ inductively so that for all $k<\oo$,
\begin{enumerate}
    \item $\htt(t_k)=\xi_k$ where $\xi_k$ is the $k$th element of $C_\alpha$ above $\htt{s}$,
\item $h(t_k)=f_\alpha(\xi_k)$, and
\item $|\sup_{\mb R}s-\sup_{\mb R}t_{k}|<\frac{1}{j}$.
    \end{enumerate}
We can always pick the next $t_k$ since $h$ is rich. In the end, $t=\cup_{k<\oo}t_k\in \sqt$ is as desired.
\end{cproof}

 In fact, when proving the theorems, we need a stronger notion. We say that $S$ is an \emph{$\alpha$-bramble} if $S\subset (\sqt)_{<\alpha}$ is a cofinal, rooted, binary subtree and for any branch $b\subset S$, $\cup b$  is a bounded subset of $\sqt$ (and so $\cup b\in \sqt$). 
 
 
 \begin{dfn}
 A map $h:\sqt\to \oo$ is \emph{flush} if it is rich and for any limit $\alpha<\omg$ and $\alpha$-bramble $S$, there is some branch $b\subs S$ so that  $h(t)=n$ for $t=\cup b$. 
 
 

 \end{dfn}
We define a flush map on $\ssqt$ similarly however, as $\ssqt$ branches at limit steps, we need a bit more care.
 \begin{dfn}
 A map $h:\ssqt\to \oo$ is \emph{flush} if it is rich and for any limit $\alpha<\omg$ and $\alpha$-bramble $S$, there is some branch $b\subs S$ so that  $h(t)=n$, $\cup b\subset t$ and $|\sup_{\mb R}\cup b-\sup_{\mb R}t|$ is arbitrary small. 
 
 

 \end{dfn}

We will show that there are flush maps on both $\sqt$ and $\ssqt$, and that any flush map witnesses Theorem \ref{thm:Qunif} and Theorem \ref{thm:Q*unif}.

\begin{lemma}\label{lm:flush}
 There are flush maps on both $\sqt$ and $\ssqt$.
\end{lemma}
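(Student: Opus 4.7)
The plan is to construct each $h$ by transfinite recursion on countable ordinals, defining $h\uhr (\sqt)_\alpha$ (respectively $h\uhr (\ssqt)_\alpha$) once the lower levels have been coloured. The workhorse is the fact that every level of either tree has cardinality $2^{\aleph_0}$, which leaves ample slack for a diagonal/greedy assignment.

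The meat of the argument lies at a limit level $\alpha<\omg$ of $\sqt$, where two families of demands must be met in tandem. The \emph{richness tasks} are triples $(s,n,k)$ with $s\in(\sqt)_{<\alpha}$, $n<\oo$ and $k\geq 1$: each asks for some $t\in(\sqt)_\alpha$ above $s$ with $|\sup_{\mb R}t-\sup_{\mb R}s|<1/k$ and $h(t)=n$. The \emph{bramble tasks} are pairs $(S,n)$ with $S$ an $\alpha$-bramble and $n<\oo$: each asks for a branch $b\subs S$ with $h(\cup b)=n$. Altogether there are at most $2^{\aleph_0}$ tasks, and I would observe that each task admits $2^{\aleph_0}$ admissible targets in $(\sqt)_\alpha$: for a richness task this uses the strong prunedness of $\sqt$ (build continuum many extensions of $s$ to level $\alpha$ within the required sup-window by branching freely at each countable level), while for a bramble task the binary tree $S$ already has $2^{\aleph_0}$ cofinal branches yielding $2^{\aleph_0}$ distinct limit sequences at level $\alpha$. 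Enumerating the tasks as $(\tau_\eta)_{\eta<2^{\aleph_0}}$, I would proceed greedily: at stage $\eta$ choose any admissible target $t_\eta$ for $\tau_\eta$ different from the targets chosen earlier — possible since only $|\eta|<2^{\aleph_0}$ targets are forbidden — and set $h(t_\eta)$ to the colour demanded by $\tau_\eta$. Remaining nodes of $(\sqt)_\alpha$ are coloured arbitrarily. Successor levels are handled by the same idea but without the bramble clause.

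For $\ssqt$ the construction is essentially the same, with one tweak at limit levels: an element $t\in(\ssqt)_\alpha$ decomposes as $\cup b\cup\{q\}$ where $\cup b$ is a limit-order-type chain in $(\ssqt)_{<\alpha}$ (bounded in $\mb Q$) and $q\in\mb Q$ is a maximum above $\sup\cup b$. Bramble tasks become triples $(S,n,k)$ asking for a branch $b\subs S$ and a rational $q\in(\sup\cup b,\sup\cup b+1/k)$ with $h(\cup b\cup\{q\})=n$; each of the $2^{\aleph_0}$ branches of $S$ admits $\aleph_0$ admissible choices of $q$, so every task still has $2^{\aleph_0}$ admissible targets and the greedy recursion goes through verbatim. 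The only conceptual hurdle is confirming that the richness and bramble demands can coexist, but this dissolves as soon as one notices that each individual task consumes just one element of a continuum-sized pool while at every stage fewer than continuum-many targets have been forbidden.
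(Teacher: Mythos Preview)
Your proof is correct and follows essentially the same approach as the paper: a level-by-level recursion, with a greedy diagonalisation of length $\mathfrak c$ at limit levels that assigns colours to distinct branch-limits of $\alpha$-brambles. The paper achieves a slight economy you could adopt: rather than treating richness tasks separately, it observes that each richness triple $(s,n,k)$ is absorbed by a bramble task, since one can build an $\alpha$-bramble rooted at $s$ all of whose nodes stay within the $1/k$ sup-window, so the branch chosen for $(S,n)$ already witnesses richness at $s$.
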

\begin{cproof}
We construct flush maps $h_\alpha:(\sqt)_{<\alpha}\to \oo$ for $\alpha<\omg$ by induction, so that $h_\beta$ extends $h_\alpha$ for $\alpha<\beta$. The final union $h=\cup_{\alpha<\omg} h_\alpha$ is the desired map on $\sqt$ then. 

We suppose that $h_\alpha$ has been constructed already and extend it to $\sqt_\alpha$ which, in turn, defines $h_{\alpha+1}$ (in limit steps, we simply take unions). If $\alpha=\alpha_0+1$ is successor then being flush compared to being rich gives no extra requirements. Note that any element $s\in \sqt_{\alpha_0}$ has infinitely many successors $t\in \ssqt_\alpha$ so that $|\sup_{\mb R}(t)-\sup_{\mb R}(s)|$ is arbitrary small. In turn, we can define $h_{\alpha+1}$ on these elements (for each such maximal $s$ independently) so that all colours appear infinitely often on these successors.

Now, suppose that $\alpha$ is limit and list all pairs $(S,n)$ as $(S_\xi,n_\xi)_{\xi<\mf c}$ where $S$ is an $\alpha$-bramble and $n\in \oo$. We inductively pick distinct $b_\xi\subset S_\xi$ which is possible since each $S_\xi$ has continuum many branches. 
Now, simply set $h_{\alpha+1}(t_\xi)=n_\xi$ for $\xi<\mf c$ where $t_\xi=\cup b_\xi$; for all other $t\in \sqt$ of height $\alpha$, we set the value of $h_{\alpha+1}$ arbitrarily. 

Note that we preserved the map being rich. Indeed, for any $s$ of height less than $\alpha$, $n\in \oo$ and $j\in \mb N$, we can build an $\alpha$-bramble $S$ with root $s$ so that $s'\in S$ implies that $|\sup_{\mb R}(s)-\sup_{\mb R}(s')|<\frac{1}{j}$. In turn,  the branch $b$ we picked for the pair $(S,n)$ will give the element $t=\cup b$ that witnesses richness.
This finishes the inductive step and hence the construction of $h$. 




Working on $\ssqt$, almost the same argument gives a flush map again. The only slight difference is that after we selected the appropriate branch $b_\xi$, we pick for each $m\in \mb N$, $t_{\xi,m}\in (\ssqt)_\alpha$ extending $\cup b_\xi$  so that $|\sup_{\mb R}\cup b_\xi-\sup_{\mb R}t_{\xi,m}|<\frac{1}{m}$. Now, we declare $h_{\alpha+1}(t_{\xi,m})=n_\xi$.


\end{cproof}

Finally, we are ready to prove the main theorems.

\begin{tproof}[Proof of Theorem \ref{thm:Qunif}] Our goal is to show that any flush map $h:\ssqt\to \oo$ will work (the existence of which is ensured by Lemma \ref{lm:flush}). 

Fix an $\oo$-colouring $\mathbf f$ of some ladder system $\mathbf C$.  The corresponding tree $T\subs \ssqt$ will be constructed by defining its initial segments $T_{<\alpha}$ for $\alpha<\omg$ by induction on $\omg$ so that

\begin{enumerate}[(a)]
\item $T_{<\alpha}$ is a countable, downward closed and strongly pruned subtree of $\ssqt$ of height $\alpha$,
\item $T_{<\beta}$ is an end extension of $T_{<\alpha}$ for $\alpha<\beta$,\footnote{That is, the trees grow up: $T_{<\alpha}\subset T_{<\beta}$ and any $t\in T_{<\beta}\setm T_{<\alpha}$ has height at least $\alpha$.}
\item $h\uhr T_{<\alpha}$ is rich, and
\item $h\uhr T_{<\alpha}$ uniformizes $\mathbf f\uhr \alpha$.
\end{enumerate}

Clearly, if we succeed then $T=\bigcup_{\alpha<\omg}T_{<\alpha}$ is the tree we were looking for.

\medskip

In limit steps of the induction, we simply take unions. So suppose that $T_{<\alpha}$ is defined and we will find $T_{<\alpha+1}=T_{<\alpha}\cup T_\alpha$ now. If $\alpha$ is a successor ordinal then the extension is straightforward again.

Now, suppose that $\alpha$ is limit. For each $s\in T_{<\alpha}$ and $j<\oo$, we construct an $\alpha$-bramble $S_{s,j}$ with the following properties:
\begin{enumerate}
    \item $S_{s,j}$ has root $s$ and $s'\in S_{s,j}$ implies that $|\sup_{\mb R}(s)-\sup_{\mb R}(s')|<\frac{1}{j}$;
    \item for any $\xi\in C_\alpha$ above $\htt(s)$ and $t\in S_{s,j}$ of height $\xi$, $h(t)=f_{\alpha}(\xi)$.
\end{enumerate}
Why is this possible? We build $S_{s,j}$ as the downward closure of a set $\{t_u:u\in 2^{<\oo}\}\subset T_{<\alpha}$ so that 
\begin{enumerate}[(i)]
    \item $t_\emptyset=s$,
    \item $t_u\subset t_v$ if and only if $u\subset v$,
    \item $|\sup_{\mb R}(s)-\sup_{\mb R}(t_u)|<\frac{1}{j}$ for any $u\in 2^{<\oo}$, and 
    \item\label{p:br} if $u\in 2^n$ then $\htt(t_u)=\xi_n$ and $h(t_u)=f_\alpha(\xi_n)$ where $\xi_n$ is the $n$th element of $C_\alpha$ above  $\htt(s)$.
 \end{enumerate}
 
Given $t_u$, we can easily pick $t_{u\smf 0}$ and  $t_{u\smf 1}$ in the right position using that $h\uhr T_{<\alpha}$ is rich.

Now, since $h$ is flush, for each $n<\oo$ and each pair $s,j$, there is some branch $b_{s,j,n}\subset S_{s,j}$ so that $h(t_{s,j,n})=n$ for an appropriate  $t_{s,j,n}\in (\ssqt)_\alpha$  that extends $\cup b_{s,j,n}$. We simply let $T_\alpha$ collect the countably many elements of the form $t_{s,j,n}$.

First, it is clear that we preserved $h$ being rich on $T_{<\alpha+1}$. Second, for any $t\in T_\alpha$ and for almost any $\xi\in C_\alpha$, $h(t\uhr \xi)=f_\alpha(\xi)$ by property (\ref{p:br}) of the $\alpha$-brambles $S_{s,j}$.  In turn, $h\uhr T_{\alpha+1}$ is still a rich uniformization, as desired.

\medskip

This finishes the inductive construction and hence the proof of the theorem.

\end{tproof}

Finally, we show how to modify the above argument for $\sqt$ and to produce uniformizations on non-special trees $T$ (however, these trees will not be Aronszajn any more).

\begin{tproof}[Proof of Theorem \ref{thm:Q*unif}] Suppose that $h:\sqt\to \oo$ is flush and take any ladder system colouring $\mathbf f$. The corresponding non-special tree $T\subs \sqt$ will be constructed by defining its initial segments $T_{<\alpha}$ for $\alpha<\omg$ by induction on $\omg$ so that

\begin{enumerate}[(a)]
\item $T_{<\alpha}$ is a downward closed and strongly pruned subtree of $\sqt$ of height $\alpha$,
\item $T_{<\beta}$ is an end extension of $T_{<\alpha}$ for $\alpha<\beta$,
\item $h\uhr T_{<\alpha}$ is rich, and
\item $h\uhr T_{<\alpha}$ uniformizes $\mathbf f\uhr \alpha$.
\end{enumerate}
 
 In limit steps, we will simply take unions, and in successor steps $\alpha+1$ where $\alpha$ is also a successor, we only aim to preserve the above properties. This so far is the same as the proof of Theorem \ref{thm:Qunif} except we allowed $T_{<\alpha}$ to have uncountable levels.
 \medskip
 
 Now, the difference (compared to the previous proof) comes  at stages when $T_{<\alpha}$ is already defined for some limit $\alpha$ and we aim to construct the $\alpha$th level $T_\alpha$. Recall that we need to make sure $T$ is non-special i.e., for any partition $g:T\to \oo$ there is some $t'<t\in T$ such that $g(t)=g(t')$.

 First, we need a definition: an \emph{$\alpha$-control tuple} is a 4-tuple $(A,s,g,i)$  that satisfies the following:
\begin{enumerate}[(i)]
 \item $A\subs T_{<\alpha}$ is a countable, downward closed and pruned subtree of height $\alpha$, and $s\in A$,
 \item $h_{<\alpha}\uhr A$ is rich, $g:A\to \oo$ and $i<2$, and
  \item\label{cond:i0} if $i=0$ then there are subtrees $A_\ell\subs A$ for $\ell<\oo$ of height $<\alpha$ so that
  \begin{itemize}
      \item $A=\bigcup_{\ell<\oo} A_\ell$ and $s\in A_0$,
      \item $h\uhr A_\ell$ is rich for all $\ell<\oo$,
      \item for any $n<\oo$, $g^{-1}(n)$ is either dense in each $A_k$ above $s$ (we call $n$ a large colour of $g$ this case),\footnote{I.e., for any $s'\in A_k$ above $s$ there is $s''\in A_k$ above $s'$ so that $g(s'')=n$.} or empty above $s$.
  \end{itemize} 
 
\end{enumerate}

Enumerate all $\alpha$-control tuples $(A,s,g,i)$ extended by an extra pair of natural numbers $1\leq m\in \oo$ and $m'\in \oo$  as $\{(A_\nu,s_\nu,g_\nu,i_\nu,m_\nu,m'_\nu):\nu<\mf c\}$ so that each 6-tuple appears $\mf c$ times. If $i_\nu=0$ then we fix $A_\nu=\bigcup_{\ell<\oo}A_{\nu\ell}$ that witnesses condition (\ref{cond:i0}).
 
 We will define $T_\alpha$ to be $\{t_\nu:\nu<\mf c\}$ so that for any $\nu<\mf c$, 
 \begin{enumerate}[(a)]
 \setcounter{enumi}{4}
     \item\label{it:aa} $s_\nu<t_\nu$ and $t_\nu^\downarrow\subs A_\nu$,
     \item $|\sup_{\mb R} s_\nu-\sup_{\mb R} t_\nu|\leq \frac{1}{m_\nu}$ and $h(t_\nu)=m'_\nu$, 
     \item\label{it:bb} for all $\xi\in C_\alpha$ above the height of $s_\nu$, $h(t_\nu\uhr \xi)=f_\alpha(\xi)$,
      \item \label{it:i} if $i_\nu=0$ then for any $n<\oo$, if $n$ is a large colour of $g_\nu$ then $g_\nu(t_\nu\uhr \vareps)=n$ for some $\vareps<\alpha$, and 
    \item \label{it:ii} if $i=1$ then for any $n<\oo$, there is $\vareps<\alpha$ such that  $g_\nu^{-1}(n)$ is either dense or empty above $t_\nu\uhr \vareps$.
 \end{enumerate}
 If we succeed, then conditions (\ref{it:aa})-(\ref{it:bb}) will ensure that  $h\uhr T_{\alpha+1}$ is a rich uniformization of $\textbf f\uhr \alpha+1$. The last two conditions will help us prove that the final tree is non-special (although, this might not be clear at this point).
 
 \medskip
 
Consider the tuple  $(A_\nu,s_\nu,g_\nu,i_\nu,m_\nu,m'_\nu)$ and let us list $C_\alpha\setm (\htt (s_\nu)+1)$ as $\xi_0<\xi_1<\dots $. We build an $\alpha$-bramble $S^\nu$ as the downward closure of a set $\{t_u:u\in 2^{<\oo}\}\subset T_{<\alpha}$ so that 

\begin{enumerate}
    \item $s_\nu<t_\emptyset$ and $t_u\subset t_v\in A_\nu$ if and only if $u\subs v\in 2^{<\oo}$,
    \item $|\sup_{\mb R}s_\nu-\sup_{\mb R}t_u|<\frac{1}{m_\nu}$ for any $u\in 2^{<\oo}$,
    \item $\htt(t_u)\geq \xi_n$ for all $u\in 2^n$ and $n<\oo$,
    \item if $\xi\in (\htt(t_v)+1)\cap C_\alpha $ then $h(t_v\uhr \xi)=f_\alpha(\xi)$ for any $v\in 2^{<\oo}$,
    \item\label{it:5} if $i=0$ and $n<\oo$ is a large colour of $g_\nu$ then for all $u\in 2^{n+1}$, $g_\nu(t_u)=n$,  and 
    \item\label{it:6} if $i=1$ and $n<\oo$ then for any $u\in 2^{n+1}$, $g_\nu^{-1}(n)$ is either dense or empty above $t_u$.
\end{enumerate}
If we succeed, it should be clear that we indeed get an $\alpha$-bramble $S^\nu$. Moreover, for any branch $b\subset S^\nu$, $|\sup_{\mb R}s_\nu-\sup_{\mb R}\cup b|\leq \frac{1}{m_\nu}$ and $h(\cup b\uhr \xi)=f_\alpha(\xi)$ for all $\xi\in C_\alpha$ above $\htt(s_\nu)$. So, we can apply that $h$ is flush and for each $\nu<\mf c$, we can pick a branch $b_\nu\subset S^\nu$ so that $h(t_\nu)=m'_\nu$ for $t_\nu=\cup b_\nu$. We do this for all $\nu<\mf c$ which defines $T_\alpha=\{t_\nu:\nu<\mf c\}$ with all the required properties. Indeed, conditions (\ref{it:i}) and (\ref{it:ii}) are ensured by (\ref{it:5}) and (\ref{it:6}).



\medskip

We still need to explain the construction of the binary system $\{t_u\}_{u\in 2^{<\oo}}$ corresponding to a fixed 6-tuple $(A_\nu,s_\nu,g_\nu,i_\nu,m_\nu,m'_\nu)$. First, choose $t_\emptyset$  with $\htt(t_\emptyset)=\xi_0$ above $s_\nu$ so that $h(t_\emptyset)=f_\alpha(\xi_0)$. This is possible since $h\uhr A_\nu$ is rich.

Suppose that $(t_u)_{u\in 2^{n}}$ has been defined and fix some $u\in 2^n$. Our goal is to find the right $t_{u\smf 0}$ and $t_{u\smf 1}$.

\begin{clm}
There is a $\bar t\in A_\nu$ above $t_u$ of height at least $\xi_{n+1}$, so that 
\begin{enumerate}
\item if $\xi\in (\htt(\bar t)+1)\cap C_\alpha$ then $h_{<\alpha}(\bar t\uhr \xi)=f_\alpha(\xi)$, and
    \item if $i_\nu=0$ and $\bar t \in A_{\nu \ell}\setm A_{\nu \ell-1}$ then $\htt(\bar t)>\max(C_\alpha\cap \htt(A_{\nu\ell}))$. 
\end{enumerate}

\end{clm}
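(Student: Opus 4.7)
The plan is to split the argument by the value of $i_\nu$, with $i_\nu=1$ reducing to a short finite induction and $i_\nu=0$ requiring preparatory work to secure condition~(2).

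For $i_\nu=1$, condition~(2) is vacuous. I would enumerate $C_\alpha\cap(\htt(t_u),\xi_{n+1}]$ as a finite sequence $\eta_0<\cdots<\eta_k$ and, using richness of $h\uhr A_\nu$ one step at a time, build $t_u<t^{(0)}<\cdots<t^{(k)}$ in $A_\nu$ with $\htt(t^{(i)})=\eta_i$ and $h(t^{(i)})=f_\alpha(\eta_i)$.  If $\eta_k<\xi_{n+1}$, one further invocation of richness extends $t^{(k)}$ to $\bar t$ at height $\xi_{n+1}$; otherwise set $\bar t=t^{(k)}$.

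For $i_\nu=0$, I would first pass to a refined witness of~(\ref{cond:i0}) in which the $A_{\nu\ell}$'s form an increasing chain with each $\htt(A_{\nu\ell})$ a limit ordinal.  Nesting comes free by replacing each $A_{\nu\ell}$ with $\bigcup_{k\leq\ell}A_{\nu k}$, which preserves both richness (a union of rich strongly pruned subtrees remains such) and the large-colour property (density is inherited by supersets).  Limit heights are obtained by padding each $A_{\nu\ell}$ upward inside $A_\nu$ using richness of $h\uhr A_\nu$, selecting padding elements carefully so that $g_\nu^{-1}(n)$ stays dense above $s_\nu$ for every large colour $n$.  With this preparation, pick $\ell$ such that $t_u\in A_{\nu\ell}$ and $\htt(A_{\nu\ell})>\xi_{n+1}$ and set $\mu=\max(C_\alpha\cap\htt(A_{\nu\ell}))$, so that $\mu\geq\xi_{n+1}$.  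Enumerate $C_\alpha\cap(\htt(t_u),\mu]$ as $\eta_0<\cdots<\eta_k=\mu$ and build $t_u<t^{(0)}<\cdots<t^{(k)}$ inside $A_{\nu\ell}$ using richness of $h\uhr A_{\nu\ell}$, matching $f_\alpha$ at each $\eta_i$.  Since $\htt(A_{\nu\ell})$ is a limit strictly above $\mu$, we have $\mu+1<\htt(A_{\nu\ell})$, so one more application of richness produces $\bar t\in A_{\nu\ell}$ above $t^{(k)}$ of height $\mu+1$.  Condition~(1) is then satisfied because $\mu+1\notin C_\alpha$ by maximality of $\mu$, so the only $C_\alpha$-points in $(\htt(t_u),\htt(\bar t)]$ are the $\eta_i$'s; and if $\ell^*$ is minimal with $\bar t\in A_{\nu\ell^*}$, nestedness gives $\htt(A_{\nu\ell^*})\leq\htt(A_{\nu\ell})$ and hence $\max(C_\alpha\cap\htt(A_{\nu\ell^*}))\leq\mu<\htt(\bar t)$, verifying condition~(2).

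The main obstacle is the preparatory limit-height refinement for the $A_{\nu\ell}$'s.  Without it, condition~(2) can genuinely fail:  if $\htt(A_{\nu\ell})-1$ happens to lie in $C_\alpha$ for every $\ell$, then any element of $A_{\nu\ell}$ sits at height $\leq\max(C_\alpha\cap\htt(A_{\nu\ell}))$ and there is simply no room to place $\bar t$ above the last $C_\alpha$-point inside $A_{\nu\ell}$.  So the technical heart of the argument is that one is free to choose an appropriate witness of~(\ref{cond:i0}) that simultaneously preserves richness of each $h\uhr A_{\nu\ell}$ and the density of every large colour of $g_\nu$; once such a witness is in hand, the rest is a finite induction on richness.
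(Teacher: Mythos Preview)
Your core argument---a finite induction through the $C_\alpha$-points using richness of $h\uhr A_{\nu\ell}$ (or $h\uhr A_\nu$ when $i_\nu=1$)---is exactly the paper's approach. The paper simply picks the minimal $\ell$ with $t_u\in A_{\nu\ell}$ and $\xi_{n+1}<\htt(A_{\nu\ell})$, lists $C_\alpha\cap\htt(A_{\nu\ell})$ above $\htt(t_u)$ as $\xi_k<\dots<\xi_K$, and climbs through these points inside $A_{\nu\ell}$, setting $\bar t=\bar t_K$ at height $\xi_K$.

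Where you diverge is in the preparatory refinement of the witness $(A_{\nu\ell})$. You are right that without nesting, a single $\ell$ satisfying both $t_u\in A_{\nu\ell}$ and $\htt(A_{\nu\ell})>\xi_{n+1}$ need not exist, and that without limit heights the strict inequality in (2) may be unattainable; the paper's own proof glosses over both points (indeed its $\bar t$ lands at height $\xi_K=\max(C_\alpha\cap\htt(A_{\nu\ell}))$, giving only $\geq$). However, your proposed fix---padding each $A_{\nu\ell}$ upward while preserving richness of $h\uhr A_{\nu\ell}$ \emph{and} density of every large colour of $g_\nu$---is left at the level of ``selecting padding elements carefully,'' and it is not clear this can be done: newly added top elements must themselves have large-colour witnesses above them inside the padded tree. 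Two much lighter remarks dissolve the issue. First, equality in (2) already suffices for the downstream use (choosing $t_{u^\smallfrown 0},t_{u^\smallfrown 1}$ above $\bar t$ in $A_{\nu\ell}$ introduces no new $C_\alpha$-points). Second, and more to the point, the only $\alpha$-control tuple with $i=0$ that is ever used arises from the elementary chain $(M_\ell)_{\ell<\omega}$ with $A_{\nu\ell}=A\cap M_\ell$, which is automatically nested with each $\htt(A_{\nu\ell})=M_\ell\cap\omg$ a limit ordinal; so your refinement step is unnecessary in the application and the paper's shortcut is harmless there.
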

\begin{cproof}
We look at the elements of $C_\alpha$ between $\htt(t_u)$ and $\xi_{n+1}$, let these be $\xi_{k}<\xi_{k+1}<\dots <\xi_{n+1}$. If, additionally, $i_\nu=0$ then we look at the minimal $\ell<\oo$ so that $\xi_{n+1}<\htt(A_{\nu\ell})$ and $t_u\in A_{\nu\ell}$. We extend our previous list $(\xi_k)_{k\leq n+1}$ by the elements of $C_\alpha\cap \htt(A_{\nu\ell})$ to get $\xi_k<\dots<\xi_K$.

Now, in a finite induction, we define $t_u<\bar t_{k}<\bar t_{k+1}<\dots <\bar t_K=\bar t$ in $A_{\nu \ell}$ (or just in $A_\nu$ if $i=1$) so that $\htt(\bar t_l)=\xi_l$ and $h_{<\alpha}(\bar t_l)=f_\alpha(\xi_l)$. This is possible since $h\uhr A_{\nu\ell}$ is rich.

\end{cproof}

Now, suppose that $\bar t$  is given as above. If $i=0$, $\bar t\in A_{\nu\ell}$ and $n$ is large for $g_\nu$ then we can find $t_{u\smf 0},t_{u\smf 1}$ above $\bar t$ in $A_{\nu\ell}$ so that $g_\nu(t_{u\smf j})=n$ for $j=0,1$. If $i=1$ then we can find $t_{u\smf 0},t_{u\smf 1}$ above $\bar t$ in $A_\nu$ so that  $g_\nu^{-1}(n)$ is either empty or dense above $t_{u\smf j}$ for both $j=0,1$. This defines $t_{u\smf 0},t_{u\smf 1}$ and hence finishes the construction of the binary tree. 

\medskip

At this point, we concluded the construction of the level $T_\alpha$ and so the whole tree $T=\bigcup_{\alpha<\omg}T_\alpha$ is defined. Why is $T$ non special? Suppose that $g:T\to \oo$ and we need to find $t'<t$ so that $g(t')=g(t)$. Take a continuous $\oo+1$-sequence of countable elementary submodels $(M_\ell)_{\ell\leq \oo}$ of $H(\mf c^+)$ with $g,T,h\in M_0$. Let $\alpha=M_\oo\cap \omg$, $A=M_\oo\cap T$ and note that  $h\uhr A$ is rich by elementarity.

\begin{clm}
 There is some $s\in A\cap M_0$ so that $(A,s,g\uhr A,0)$ is an $\alpha$-control tuple.
\end{clm}
\begin{cproof}$(A,\emptyset,g\uhr A,1)$ is certainly an  $\alpha$-control triple so it is enumerated at some step $\mu<\mf c$. The node $t_\mu$ that we introduced (above $\emptyset$) satisfies that for any colour $n<\oo$, $g^{-1}(n)$ is either dense or empty above $t_\mu$ by (\ref{it:ii}). So this property reflects to $M_0$, and we can find an $s\in A\cap M_0$ so that for any colour $n<\oo$,  $g^{-1}(n)$ is either dense or empty above $s$. So,  $(A,s,g\uhr A,1)$ is a control tuple witnessed by the sequence $(A\cap M_\ell)_{\ell<\omega}$.

\end{cproof}

In turn, at step $\alpha$, we enumerated $(A,s,g\uhr A,0)$ as $(A_\nu,s_\nu,g_\nu,i_\nu)$ for some $\nu<\mf c$. First, we claim that $n=g(t_\nu)$ is a large colour of $g_\nu$. Indeed, $t_\nu$ witnesses that  $g_\nu^{-1}(n)$ cannot be empty above any restriction of $t_\nu$. However, in this case there is some $\vareps<\alpha$ so that $g_\nu(t_\nu\uhr \vareps)=n$ by condition (\ref{it:i}). In other words, $g(t_\nu)=g(t_\nu\uhr \vareps)$ as desired.

\end{tproof}

There is another well-investigated class of non-special trees: given a stationary set $I\subset \omg$, let $T_I$ denote the set of all closed subsets of $I$ with the end extension relation \cite{stevocont}. We believe that a non-essential modification of the above arguments yields  the following analogue of Theorem \ref{thm:Q*unif}: there is a single colouring $h^*:T_I\to \oo$ so that any ladder system colouring has a non-special $T_I$-uniformization.

\section{Uniformization results from strong diamonds}\label{sec:diamondunif}

Recall that $\diamondsuit^+$ asserts the existence of a sequence $W=(W_\alpha)_{\alpha<\omg}$ so that $|W_\alpha|\leq \oo$ and for any $X\subs \omg$, there is a closed unbounded $D\subset \omg$ so that  for any $\alpha\in D$,  $$X\cap \alpha,D\cap \alpha\in W_\alpha.$$


%
%
%
%
 
Our main results read as follows.


\begin{thm}\label{thm:dmplus}
 $\diamondsuit^+$ implies that  for any ladder system $\mathbf C$, there is a special Aronszajn tree  $T\subs \ssqt$ so that $\unifc{\oo}{T}{\textbf C}$.
  
\end{thm}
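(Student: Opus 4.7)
The plan is to build $T \subseteq \ssqt$ level-by-level by induction on $\alpha < \omg$, using the $\diamondsuit^+$-sequence $(W_\alpha)_{\alpha < \omg}$ to anticipate monochromatic colourings of $\mathbf C$. Fix once and for all a flush master colouring $h : \ssqt \to \oo$ as in Lemma \ref{lm:flush}, and aim that for every monochromatic $\mathbf c^*$ the restriction of $h$ to an appropriate subtree $S^{\mathbf c^*} \subseteq T$ is a $T$-uniformization of $\mathbf c^*$. The induction will maintain that $T_{<\alpha}$ is countable, strongly pruned in $\ssqt$, and $h \uhr T_{<\alpha}$ is rich; successor stages are routine.

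At a limit $\alpha$, we enumerate those elements of $W_\alpha$ that decode (via a fixed pairing on $H(\aleph_1)$) as pairs $(\mathbf c, D)$ with $\mathbf c : \alpha \cap \lim\omg \to \oo$ and $D \subseteq \alpha$ closed satisfying $\alpha \in D$. Together with $T_{<\alpha}$, such a pair canonically determines an approximation $S^{\mathbf c, D}_{<\alpha} \subseteq T_{<\alpha}$ which is a strongly pruned subtree on which $h$ is rich and $h \uhr S^{\mathbf c, D}_{<\alpha}$ uniformizes $\mathbf c$, because by induction along $D$ the same construction has been performed at every earlier $\beta \in D$. For each such candidate and each $s \in S^{\mathbf c, D}_{<\alpha}$, $j, n \in \oo$, we build a node $t \in \ssqt_\alpha$ above $s$ with $t^\downarrow \subseteq S^{\mathbf c, D}_{<\alpha}$, $|\sup_{\mathbb R} s - \sup_{\mathbb R} t| < 1/j$, $h(t) = n$, and $h(t \uhr \xi) = \mathbf c(\gamma)$ for almost every $\xi \in C_\gamma$ at every limit $\gamma \in (\max(D \cap \alpha), \alpha]$. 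The technique is a nested $\alpha$-bramble modelled on the proof of Theorem \ref{thm:Qunif}: after enumerating the limits in the gap as $(\gamma_k)_{k < \oo}$, we build the bramble so that its nodes at each level $\xi \in \bigcup_k C_{\gamma_k}$ carry the $h$-value $\mathbf c(\gamma_{k_0(\xi)})$ for the priority index $k_0(\xi) = \min\{k : \xi \in C_{\gamma_k}\}$. Any branch through this bramble then automatically incurs at most finitely many errors on each individual $C_{\gamma_j}$, since the error set is contained in $\bigcup_{k < j} (C_{\gamma_j} \cap C_{\gamma_k})$ and each term is finite because $|C_\gamma \cap C_{\gamma'}| < \oo$ for distinct limits $\gamma < \gamma'$. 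The flush property of $h$ then realises the prescribed top value $h(t) = n$ along such a branch and the closeness to $s$. These countably many nodes are added to $T_\alpha$ and to the extension $S^{\mathbf c, D}_\alpha$, together with routine strong-pruning extensions, keeping $T_\alpha$ countable.

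To verify $\unifc{\oo}{T}{\mathbf C}$, fix a monochromatic $\mathbf c^*$ and apply $\diamondsuit^+$ to its code to obtain a club $D^*$ with $\mathbf c^* \cap \alpha, D^* \cap \alpha \in W_\alpha$ for every $\alpha \in D^*$. The candidate $(\mathbf c^*, D^*)$ is then processed at every limit $\alpha \in D^*$, incrementally producing a strongly pruned $S^{\mathbf c^*} \subseteq T$. The gap control guarantees that every $t \in S^{\mathbf c^*}_\alpha$ also satisfies the uniformization condition at every intermediate limit $\gamma \in (\max(D^* \cap \alpha), \alpha)$, so by downward closure $h \uhr S^{\mathbf c^*}$ is a genuine $T$-uniformization of $\mathbf c^*$. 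Since $T \subseteq \ssqt$ has countable levels by construction and inherits the absence of uncountable chains from $\ssqt$, it is a special Aronszajn tree. The main obstacle is precisely the gap-handling bramble that simultaneously realises the correct $h$-values at all limits in $(\max(D \cap \alpha), \alpha]$; this is where the finiteness of pairwise intersections $C_\gamma \cap C_{\gamma'}$ and the flush property of $h$ are both essential.
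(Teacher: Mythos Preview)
Your approach has a genuine gap in the bramble construction that handles the interval $(\max(D\cap\alpha),\alpha]$. You propose to build an $\alpha$-bramble inside $T_{<\alpha}$ whose nodes at every level $\xi\in\bigcup_k C_{\gamma_k}$ carry the prescribed $h$-value $\mathbf c(\gamma_{k_0(\xi)})$. But a bramble in the paper's sense is indexed by $2^{<\omega}$ and carries one controlled height per $n<\omega$; richness of $h\uhr T_{<\alpha}$ lets you prescribe $h(t_u)$ at height $\eta_n$, but it gives you \emph{no control whatsoever} over $h(t_{u^\frown i}\uhr\xi)$ for $\xi$ strictly between $\eta_n$ and $\eta_{n+1}$. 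The set $\bigcup_k C_{\gamma_k}$ has order type exceeding $\omega$ as soon as there is more than one limit in the gap, so you cannot enumerate it as the sequence of bramble heights. Your priority analysis of the errors is correct \emph{given} a branch realising the prescription, but the existence of such a branch is exactly what fails: for a limit $\gamma$ in the gap, the level $T_\gamma$ was constructed at stage $\gamma$, where the pair $(\mathbf c,D)$ was not available in $W_\gamma$, so no node of $T_\gamma$ was deliberately placed to satisfy $h(\cdot\uhr\xi)=\mathbf c(\gamma)$ for almost all $\xi\in C_\gamma$. A transfinite recursion along the limits in $(\beta,\alpha]$ runs into the same wall at its own limit stages, since the supremum of the partial branch need not land in $T$.

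The paper sidesteps this entirely by \emph{not} committing to the fixed flush colouring. Instead it allows the uniformizing map itself to vary: at stage $\alpha$ the sealing function acts on the countable family $\mc H_\alpha$ of all $W_{<\alpha+1}$-definable rich maps on strongly pruned subtrees of $T_{<\alpha}$, and the eventual uniformization $h_\xi$ for a given $\mathbf f$ is built along the $\diamondsuit^+$-club by alternately applying $\seal_{\alpha_\xi}$ and Lemma~\ref{lm:unifextend}. The latter lemma is precisely where the gap $(\alpha_\xi,\alpha_{\xi+1})$ is absorbed: because the map is being freshly defined on $s^\downarrow\setminus\dom h_\xi$, one can simply assign the values $f_\gamma(\xi)$ there (your priority rule, in fact), with no constraint coming from a pre-existing $h$. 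The crucial point is that this freshly constructed $h_\xi$ is canonically definable from $W_{<\alpha_\xi+1}$, so it lies in $\mc H_{\alpha_\xi}$ and the sealing function at stage $\alpha_\xi$ already anticipated it. Freedom in the map, rather than only in the subtree, is what makes the argument go through.
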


 
 \begin{thm}\label{thm:nonspecial} $\diamondsuit^+$ implies that   for any ladder system $\mathbf C$, there is an Aronszajn tree $T\subs 2^{<\omg}$ so that  any  monochromatic $\oo$-colouring of $\mathbf C$  has a  Suslin $T$-uniformization.\footnote{I.e., the uniformization is defined on a Suslin subtree.}
 \end{thm}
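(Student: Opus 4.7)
The plan is a $\diamondsuit^+$-driven inductive construction, parallel in spirit to the proof of Theorem \ref{thm:dmplus}. Since $\diamondsuit^+$ entails CH, we enumerate all monochromatic $\oo$-colourings of $\mathbf C$ as $(\mathbf f^\gamma)_{\gamma<\omg}$, writing $c^\gamma_\alpha$ for the constant value of $f^\gamma_\alpha$. We shall recursively construct, for $\alpha<\omg$, the initial segment $T_{<\alpha}\subs 2^{<\alpha}$ of an Aronszajn tree with countable levels together with, for each $\gamma<\alpha$, a pruned, downward-closed subtree $S^\gamma_{<\alpha}\subs T_{<\alpha}$ and a colouring $\varphi^\gamma:S^\gamma_{<\alpha}\to\oo$. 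Throughout the construction we preserve two invariants: (A) each $\varphi^\gamma$ uniformizes $\mathbf f^\gamma$ on limit levels below $\alpha$; and (B) \emph{richness}: for every $\gamma<\alpha$, every $s\in S^\gamma_{<\alpha}$, every $\vareps\in(\htt(s),\alpha)$ and every $n\in\oo$, some extension $t\in S^\gamma_\vareps$ of $s$ satisfies $\varphi^\gamma(t)=n$.

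Successor steps are routine: split each node of $T_\beta$ as $s\smf 0,s\smf 1$ and assign $\varphi^\gamma$-values freely, preserving richness. The heart of the construction is the limit step $\alpha$, where the $\diamondsuit^+$-sequence $W_\alpha$ provides countable anticipations. For each triple $(\gamma,s,n)$ with $\gamma\leq\alpha$, $s\in S^\gamma_{<\alpha}$ and $n\in\oo$ we must add a single extension $t=\bigcup b\in 2^\alpha$ of $s$ with $\varphi^\gamma(t)=n$, where $b\subs S^\gamma_{<\alpha}$ is a cofinal branch satisfying: (i) $\varphi^\gamma(b\uhr \xi)=c^\gamma_\alpha$ for almost all $\xi\in C_\alpha$, so that $\varphi^\gamma$ uniformizes $\mathbf f^\gamma$ at $\alpha$; (ii) $b$ passes above some element of every maximal antichain of $S^\gamma$ anticipated by $W_\alpha$, which will seal Suslin-ness; and (iii) if $\gamma=\alpha$, the seed $s$ founds the new subtree $S^\alpha$. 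Invariant (B) together with an $\omega$-step fusion (alternately pushing the current finite approximation of $b$ past the next element of $C_\alpha$ at colour $c^\gamma_\alpha$ and then past the next anticipated antichain) produces such a $b$. Defining $T_\alpha$ as the countable set of all these upper bounds $\bigcup b$ yields a countable level in $2^\alpha$; to ensure $T$ is Aronszajn we additionally use $W_\alpha$ to anticipate any potential uncountable chain $X\subs T$ and simply omit the upper bound $\bigcup(X\cap T_{<\alpha})$ from $T_\alpha$ whenever such a candidate is captured.

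The main obstacle is the combinatorial coordination at the limit stages: each $S^\gamma$ imposes its own uniformization requirement (agreement with $c^\gamma_\alpha$ along $C_\alpha$) and its own antichain-sealing requirement, and all of this must be realised by a single countable level $T_\alpha\subs 2^\alpha$ while richness is maintained. The crucial observation is that richness, guaranteed at every stage below $\alpha$, gives us total freedom to prescribe any $\varphi^\gamma$-value at any bounded height, which is what allows the $\omega$-step fusion to succeed simultaneously for all tasks and all anticipated antichains. Once the recursion succeeds, each $S^\gamma$ is Suslin by the standard $\diamondsuit^+$-argument: any uncountable maximal antichain $A\subs S^\gamma$ is, on a club of $\alpha$, reflected so that $A\cap S^\gamma_{<\alpha}$ is already a maximal antichain and is captured by $W_\alpha$; by construction every node of $S^\gamma_\alpha$ extends some element of $A\cap\alpha$, so $A\subs S^\gamma_{<\alpha}$, contradicting $|A|=\omg$. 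Invariant (A) makes $\varphi^\gamma$ a uniformization of $\mathbf f^\gamma$, and the omission step above makes $T$ an Aronszajn tree, giving the theorem.
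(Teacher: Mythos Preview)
There is a fundamental counting error at the outset: you claim to enumerate all monochromatic $\oo$-colourings of $\mathbf C$ as $(\mathbf f^\gamma)_{\gamma<\omg}$, but such a colouring is determined by its sequence of constant values $(c^\gamma_\alpha)_{\alpha\in\lim\omg}\in\oo^{\omg}$, and there are $2^{\aleph_1}>\aleph_1$ of these. CH does not help here. This is precisely why the paper cannot, and does not, build the uniformizations alongside the tree; compare Theorem~\ref{thm:family}, which does use your simultaneous approach but is explicitly restricted to a prescribed family of $\aleph_1$ colourings.

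The paper's workaround is the essential idea you are missing. At each limit level $\alpha$ one does not handle a fixed list of colourings; instead one considers the countable set $\mc H_\alpha$ of \emph{all} rich maps on subtrees of $T_{<\alpha}$ that happen to be $W_{<\alpha+1}$-definable, and a sealing function $\seal_\alpha$ records, for every $h\in\mc H_\alpha$ and every colour $n$, a set of nodes on level $\alpha$ through which $h$ can be continued (with the $\alpha$-reflecting antichains from $W_\alpha$ sealed along the way). Only \emph{after} $T$ is complete does one, given an arbitrary monochromatic colouring $\mathbf f$, build its uniformization $h=\bigcup_\xi h_\xi$ along a club $D=\{\alpha_\xi:\xi<\omg\}$ by repeatedly applying $\seal$. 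The delicate point, and the reason $\diamondsuit^+$ rather than $\diamondsuit$ is invoked, is that at a limit $\xi$ one must know $h_\xi=\bigcup_{\zeta<\xi}h_\zeta\in\mc H_{\alpha_\xi}$; this requires $h_\xi$ to be $W_{<\alpha_\xi+1}$-definable, which in turn needs the parameters $D\cap\alpha_\xi$ and $\mathbf f\uhr\alpha_\xi$ to lie in $W_{\alpha_\xi}$, exactly what the club-capture of $\diamondsuit^+$ provides. Your fusion argument for a single branch is correct and matches the paper's density lemma, but it only becomes useful once this definability bookkeeping replaces the impossible enumeration.
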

 
 In the latter result, the tree $T$ is necessarily non-special but we don't know if it can be made \emph{almost Suslin} i.e., to contain no stationary antichains. Equivalently, whether $\unifc{\oo}{T}{\textbf C}$ implies the existence of stationary antichains in $T$.

Combined with previous results, we also get the following result.
 
 \begin{cor}\label{cor:cunif}
  $\diamondsuit^+$ implies that for any ladder system $\textbf C$, there is an Aronszajn tree $T$ so that $\unifc{\oo}{T}{\textbf C}$ holds but $\unif{2}{T}{\textbf C}$ fails.
 \end{cor}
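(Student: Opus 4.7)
The plan is to observe that the tree produced by Theorem \ref{thm:dmplus} (or equally by Theorem \ref{thm:nonspecial}) automatically satisfies the negative half as well, so no new construction is needed. I would simply take $\mathbf C$ to be an arbitrary ladder system and feed it into Theorem \ref{thm:dmplus} to obtain a (special) Aronszajn tree $T \subseteq \ssqt$ with $\unifc{\oo}{T}{\mathbf C}$. The positive half is then handed to us for free.

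For the negative half, I would invoke Theorem \ref{thm:weakdnonunif}, which asserts that $\diamondsuit^{\omg}(\textmd{non}(\mc M))$ implies $\neg\unif{2}{T'}{\mathbf C}$ for \emph{every} $\aleph_1$-tree $T'$ and every ladder system. Since the excerpt records that $\diamondsuit^{\omg}(\textmd{non}(\mc M))$ follows from $\diamondsuit$, and $\diamondsuit^+ \Rightarrow \diamondsuit$ is a standard implication, $\diamondsuit^+$ entails $\diamondsuit^{\omg}(\textmd{non}(\mc M))$. Applied to our specific tree $T$ (which is an $\aleph_1$-tree, being Aronszajn), this yields $\neg\unif{2}{T}{\mathbf C}$ immediately.

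Thus the same tree $T$ simultaneously witnesses $\unifc{\oo}{T}{\mathbf C}$ and the failure of $\unif{2}{T}{\mathbf C}$. Since the two ingredients pull in opposite directions on a priori different complexities of colourings (monochromatic $\oo$-colourings versus arbitrary $2$-colourings), there is no circularity or tension in combining them. The only ``obstacle'' is purely bookkeeping: making sure the positive result indeed delivers an $\aleph_1$-tree so that Theorem \ref{thm:weakdnonunif} applies verbatim, and noting the implication chain $\diamondsuit^+ \Rightarrow \diamondsuit \Rightarrow \diamondsuit^{\omg}(\textmd{non}(\mc M))$; both are already in the paper or folklore, so the proof should be no more than a few lines.
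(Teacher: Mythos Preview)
Your proposal is correct and matches exactly what the paper has in mind: the corollary is stated without proof, merely as ``combined with previous results,'' and the intended combination is precisely Theorem~\ref{thm:dmplus} (or \ref{thm:nonspecial}) for the positive half together with Theorem~\ref{thm:weakdnonunif} for the negative half, via the implication chain $\diamondsuit^+\Rightarrow\diamondsuit\Rightarrow\diamondsuit^{\omg}(\textmd{non}(\mc M))$ noted in Section~\ref{sec:nonunif2}.
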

 
 We are not aware of an analogue of this result in the classical context of $\omg$-uniformizations. On a related note, it is proved in \cite{whitehead2} that consistently, $\unif{2}{\omg}{\textbf C}$ holds but $\unif{\oo}{\omg}{\textbf C}$ fails for some $\mathbf C$.
 
 \medskip

The idea to prove the above theorems is the following:  in the end, no matter how we constructed the tree $T$, the $\diamondsuit^+$-sequence $(W_\alpha)_{\alpha\in \omg}$ will  allow us to guess initial segments of any ladder system colouring $\mathbf f$ club often. So, at some stage $\alpha$ of constructing the tree (when we constructed $T_{<\alpha}$ already), we look at  elements of $W_\alpha$  that look like a $T_{<\alpha}$-uniformization of some partial ladder system colouring $\mathbf f \uhr \alpha$ that is also in $W_\alpha$; collect these partial uniformizations into a set $\mc H_\alpha$, which must be countable. We aim to extend $T_{<\alpha}$ by the level $T_\alpha$ so that for any element $h$ of $\mc H_\alpha$ and any possible colour $n$ for the ladder $C_\alpha$ there are a lot of $t\in T_\alpha$ so that $t^\downarrow\subs \dom h$ and $h(t\uhr \xi)=n$ for almost all $\xi\in C_\alpha$. This will allow us to extend any map $h\in \mc H_\alpha$ to the level $\alpha$ no matter what is the constant value $n$ of $f_\alpha$.

\medskip

We will use rich maps, strongly pruned trees and some arguments that resemble the work in Section \ref{sec:zfc}. We state one extra preliminary lemma before proving the theorems.

\begin{lemma}\label{lm:unifextend}
 Let $\alpha_0<\alpha_1<\omg$ and suppose that $T$ is a downward closed, strongly pruned subtree of $\ssqt$ of height $\alpha_1$, and $\mathbf f$ is a ladder system $\oo$-colouring. If $h_0$ is a rich $T_{<\alpha_0+1}$-uniformization of $\mathbf f\uhr \alpha_0+1$ then there is a rich $h_1$ extending $h_0$ so that $h_1$ is a  $T_{<\alpha_1}$-uniformization of $\mathbf f\uhr \alpha_1$.

\end{lemma}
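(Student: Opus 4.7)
The plan is to construct $h_1$ by transfinite induction on $\alpha \in [\alpha_0+1,\alpha_1]$, producing an increasing chain of rich partial uniformizations $h^\alpha : S^\alpha \to \oo$, where each $S^\alpha$ is a strongly pruned subtree of $T_{<\alpha}$ containing $\dom h_0$, and $h^\alpha$ extends $h_0$ and uniformizes $\mathbf f\uhr \alpha$. The base case is $h^{\alpha_0+1}=h_0$; limit stages are taken by union; the substantive work is at successor stages.

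At a successor stage $\beta \to \beta+1$ where $\beta$ is itself a successor ordinal, no new uniformization requirement is imposed at level $\beta$. For each $s \in S^\beta_{<\beta}$ and each pair $(n,j) \in \oo\times\oo$, strong prunedness of $T$ supplies a $t \in T_\beta$ above $s$ with $|\sup_{\mb R}(t)-\sup_{\mb R}(s)|<1/j$; adjoining such nodes to $S^{\beta+1}_\beta$ and declaring $h^{\beta+1}(t)=n$ preserves both strong prunedness and richness.

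The crux is the successor stage $\alpha \to \alpha+1$ with $\alpha$ a limit ordinal. Imitating the proof of Theorem \ref{thm:Qunif}, for each $s\in S^\alpha$ and each $j\in\oo$ we construct an $\alpha$-bramble $B_{s,j}\subseteq S^\alpha$ rooted at $s$, whose every node $s'$ satisfies $|\sup_{\mb R}(s')-\sup_{\mb R}(s)|<1/j$ and such that every node of $B_{s,j}$ of height $\xi\in C_\alpha$ has $h^\alpha$-value $f_\alpha(\xi)$. Such a bramble is built as the downward closure of a binary system $\{t_u:u\in 2^{<\oo}\}\subseteq S^\alpha$ chosen recursively, using the richness of $h^\alpha$ to pick the $t_u$ at the prescribed heights and with the prescribed values. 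We then wish, for each triple $(s,j,n)$, to adjoin to $S^{\alpha+1}_\alpha$ a node $t\in T_\alpha$ above $s$ which extends some branch of $B_{s,j}$, and to set $h^{\alpha+1}(t)=n$.

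The main obstacle, in contrast to Theorem \ref{thm:Qunif}, is that the level $T_\alpha$ is given in advance and generically sparse, so an arbitrary branch of $B_{s,j}$ need not extend to a node of $T_\alpha$; the proof of Theorem \ref{thm:Qunif} sidestepped this by appealing to a fixed flush master colouring on the ambient tree $\ssqt$, which is no longer available here. We resolve the issue by coupling the two choices: strong prunedness of $T$ first supplies a candidate $\tilde t\in T_\alpha$ above $s$ with $|\sup_{\mb R}(\tilde t)-\sup_{\mb R}(s)|<1/j$, and the bramble $B_{s,j}$ is then grown so as to contain $\tilde t^\downarrow$ as one of its branches. Compatibility requires the $h^\alpha$-values along $\tilde t^\downarrow$ to already agree with $f_\alpha$ on almost all $\xi\in C_\alpha$ above $\htt(s)$; this demand is anticipated at earlier inductive stages by reserving, at each level $\vareps$, auxiliary nodes whose values can be committed to $f_\alpha(\vareps)$ for each of the countably many limits $\alpha$ with $\vareps\in C_\alpha$ and each of the countably many lower sources. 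A standard countable bookkeeping folds these requirements into the successor steps of the induction, and once $\tilde t$ is located, the remainder of $B_{s,j}$ is filled out inside $\tilde t^\downarrow$ using the richness of $h^\alpha$. The resulting $h_1 := h^{\alpha_1}$ is the desired rich $T_{<\alpha_1}$-uniformization of $\mathbf f\uhr\alpha_1$.
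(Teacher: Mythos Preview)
Your level-by-level induction with brambles is imported wholesale from Theorem~\ref{thm:Qunif}, but the two situations are dual: there the tree was being built and the colouring $h$ was a fixed flush map, so brambles were needed to guarantee some branch received the right top value; here the tree $T$ is given and you are \emph{defining} $h$, so the bramble machinery is irrelevant --- once you have a single branch $\tilde t^\downarrow$ you can assign $h^{\alpha+1}(\tilde t)=n$ directly, and the remaining branches of $B_{s,j}$ never get used.

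More seriously, your bookkeeping is circular. You say that strong prunedness ``first supplies a candidate $\tilde t\in T_\alpha$'' at stage~$\alpha$, and then the reservation of nodes at earlier levels $\varepsilon<\alpha$ is supposed to have committed $h^\alpha(\tilde t\uhr\varepsilon)=f_\alpha(\varepsilon)$. But the node $\tilde t\uhr\varepsilon$ is a specific element of $T_\varepsilon$ determined by $\tilde t$, so the bookkeeping at stage $\varepsilon$ cannot know which node to reserve unless $\tilde t$ was chosen \emph{before} the induction began, not at stage~$\alpha$. Even if you fix this by selecting all the $\tilde t(s,j,\alpha)$ in advance, you have not addressed the possibility that two such branches share a node at some level $\varepsilon\in C_\alpha\cap C_{\alpha'}$ with $f_\alpha(\varepsilon)\neq f_{\alpha'}(\varepsilon)$; the ``standard countable bookkeeping'' you invoke does not by itself resolve such conflicts.

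The paper avoids all of this by dropping the level-by-level induction entirely. It defines a poset $\mc P$ of finite partial uniformizations $p$ whose domain is a finite union of downward cones $t^\downarrow$ in $T_{<\alpha_1}$, compatible with $h_0$ and uniformizing $\mathbf f$ wherever defined. The single density fact one needs --- that any $p\in\mc P$ and any $s$ above $\dom p$ admit a $q\supseteq p$ with $s\in\dom q$ --- amounts to the elementary observation that a single countable branch can always be coloured to uniformize $\mathbf f$ along it. A sufficiently generic filter then yields $h_1=h_0\cup\bigcup G$, rich on a strongly pruned subtree, with no bookkeeping across levels and no brambles. If you unwind your proposal and choose the $\tilde t$'s at the outset, you are essentially reproducing this argument, but with extra scaffolding that only obscures it.
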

\begin{cproof} Look at the poset $\mc P$ of all maps $p$ so that 
\begin{enumerate}
 \item $\dom p=\cup\{t^\downarrow:t\in D^p\}$ where $D_p\subseteq T_{<\alpha_1}$ is finite,
 \item  $p\cup h_0$ is still a function, and
 \item  $p$ uniformizes $\textbf f$: for any $t\in \dom p$ of limit height $\alpha$ and for almost all $\xi\in \dom f_\alpha$, $p(t\uhr \xi)=f_\alpha(\xi)$.  
\end{enumerate}

It is easily checked that for any $p\in \mc P$ and $s\in T_{<\alpha_1}$ extending some element of $\dom p$, there is some $q\in \mc P$ so that $q\supseteq  p$ and $s\in \dom q$. So, one can find a filter $G\subs \mc P$ so that $h_1=h_0\cup(\bigcup G)$ is a rich map on a strongly pruned subtree of $T_{<\alpha_1}$. Indeed, this is done by requiring $G$ to meet an appropriate countable collection of dense subsets of $\mc P$.
\end{cproof}

Finally, given a countable set $W$, we will say that some set $x$ is \emph{$W$-definable} if $x\in M$ whenever $M$ is a countable elementary submodel of $(H(\mathfrak c^+),\in,\prec)$ with $W\in M$, where $\prec$ is some fixed well-order on $H(\mathfrak c^+)$.

\medskip

Let us prove the theorems now.

\begin{tproof}[Proof of Theorem \ref{thm:dmplus}] Let $\textbf W=(W_\alpha)_{\alpha<\omg}$ denote the $\diamondsuit^+$ sequence, and let  $W_{< \beta}=(W _\alpha)_{\alpha< \beta}$ for $\beta<\omg$.

The tree $T\subs \ssqt$ will be constructed by defining its initial segments $T_{<\alpha}$ for $\alpha<\omg$ by induction on $\omg$, along with countable sets $\mc H_\alpha$ and so called \textit{sealing functions} $$\seal_\alpha:\mc H_\alpha\times \oo \to [T_\alpha]^\oo.$$ In the end, these functions will tell us how to continue a partial $T$-uniformization.

We assume that these objects satisfy the following properties, which we preserve throughout the induction:
%
\begin{enumerate}[(a)]
\item $T_{<\alpha}\subs \ssqt$ is a countable, downward closed strongly pruned and uniquely $W_{< \alpha+1}$-definable tree,
\item $T_{<\beta}$ is an end extension of $T_{<\alpha}$ for $\alpha<\beta$,
\end{enumerate}
and for any limit $\alpha<\omg$,
\begin{enumerate}[(a)]
\setcounter{enumi}{2}
\item $\mc H_\alpha$ is the set of all $W_{< \alpha+1}$-definable rich maps $h:S\to \oo$ so that $S\subs T_{<\alpha}$ is strongly pruned,
\item $\seal_\alpha$ is uniquely $W_{< \alpha+1}$-definable, and
 \item\label{seal} for all $h\in \mc H_\alpha$,
 \begin{enumerate}[(i)]
  \item  $\dom (h)\cup \seal_\alpha(h,n)$ is a downward closed, strongly pruned subtree of $T_{\leq \alpha}$, and
  \item $h(t\uhr \xi)=n$ for all $t\in \seal_\alpha(h,n)$ and almost all $\xi\in C_\alpha$.
 \end{enumerate}

\end{enumerate}

These assumptions ensure that for any $h\in \mc H_\alpha$, there is some rich extension $h'$ of $h$ that is defined on   $\dom (h)\cup \seal_\alpha(h,n)$. Moreover, if $h$ was a uniformization for some colouring $\textbf f\uhr \alpha$ of $\textbf C\uhr \alpha$, then $h'$ is a uniformization of $\textbf f\uhr \alpha+1$ whenever $f_\alpha$ is constant $n$.

\medskip

First, the less interesting cases: in limit steps, we simply take unions (which preserves being strongly pruned and the definability requirements). If $\alpha$ is successor and $T_{<\alpha}$ is defined then we canonically extend $T_{<\alpha}$ by a level $T_\alpha$ so that $T_{<\alpha+1}$ remains strongly pruned.

Now, we describe the construction of $T_{<\alpha+1}=T_{<\alpha}\cup T_\alpha$ for a limit $\alpha$ when $T_{<\alpha}$ is defined already. To this end, fix some $h\in \mc H_\alpha$, $s\in \dom h$  and $n,j\in \oo$. Consider the poset $\mc P=\mc P_{s,h,n,j}$ of all finite, non empty chains $p\subs \dom h$ above $s$ so that for all $t\in p$, 
\begin{enumerate}
 \item if $\xi\in C_\alpha$ and $\xi>\htt{s}$ then $h(t\uhr \xi)=n$, and
 \item  $|\max_{\mb Q}(t)-\max_{\mb Q}(s)|<\frac{1}{j}$.
\end{enumerate}

Using that $h$ is rich, it is easy to check  that for any $\vareps<\alpha$, $D_\vareps=\{p\in \mc P:\htt(\cup p)\geq \vareps\}$ is dense in $\mc P_{s,h,n,j}$. Now, take a finite support product $\mc P=\Pi_{i<\oo}\mc P_i$ so that for all $s,h,n,j$ as above, there are infinitely many $i<\oo$ so that $\mc P_i=\mc P_{s,h,n,j}$. It is easy to find a sufficiently generic filter $G\subs \mc P$ so that we get pairwise different, cofinal branches $b_i=\cup G(i)$ in $T_{<\alpha}$ from the $i$th coordinate of $G$. Note that if  $\mc P_i=\mc P_{s,h,n,j}$ then $|\sup_{\mb Q} b_i-\max_{\mb Q} s|\leq \frac{1}{j}$, so we can let $t^*_i=b_i\cup\{\max_{\mb Q}(s)+\frac{1}{j}\}\in \ssqt$ and let $$T_\alpha=\{t_i^*:i<\oo\}.$$

The sealing function $\seal_\alpha$ is defined as follows: given $h\in \mc H_\alpha$ and $n<\oo$, let
$$\seal_\alpha(h,n)=\{t^*_i:i<\oo,\mc P_i=\mc P_{s,h,n,j}\textrm{ for some } j<\oo, s\in \dom h\}.$$

Since the poset $\mc P$ is uniquely definable from $\mc H_\alpha$, we can make a canonical choice of $G$ (using the well order $\prec$), and so $T_\alpha$ and  $\seal_\alpha$ are uniquely definable from $W_{<\alpha+1}$ as well.

\medskip

This finishes the construction of our (special) strongly pruned Aronszajn tree $T=\bigcup_{\alpha<\omg}T_{<\alpha}$. 
\medskip

Suppose now that $\textbf f$ is a monochromatic $\oo$-colouring of $\mathbf C$, and we would like to find a $T$-uniformization. There is a club $D\subs \omg$ so that $\delta\in D$ implies that $\mathbf C\uhr \delta, D\cap \delta,\textbf f\uhr \delta, W_{<\delta}\in W_\delta$. Let $\{\alpha_\xi:\xi<\omg\}$ be the increasing enumeration of $D$, and let  $n_{\xi}$ denote the constant value of $f_{\alpha_\xi}$.

 We will define a $\subseteq$-increasing sequence of functions $(h_\xi)_{\xi<\omg}$ so that

\begin{enumerate}
\item $h_\xi\in \mc H_{\alpha_\xi}$ i.e., $h_\xi$ is a rich $W_{<\alpha_\xi+1}$-definable map on a strongly pruned subtree of $T_{< \alpha_\xi}$, and
\item $h_\xi$ uniformizes $\textbf f\uhr \alpha_\xi+1$.

\end{enumerate}

If we succeed, then $h=\cup_{\xi<\omg}h_\xi$ is the desired $T$-uniformization of $\mathbf f$. 

\medskip

Start by taking the $\prec$-minimal rich map $h_0:T_{<\alpha_0}\to \oo$ which uniformizes $\textbf f\uhr \alpha_0$; note that this is possible by Lemma \ref{lm:unifextend} and that $h_0\in \mc H_{\alpha_0}$ as $\mathbf{f}\uhr \alpha_0\in W_{\alpha_0}$.

Now, suppose we constructed some $h_{\xi}\in \mc H_{\alpha_\xi}$. In turn, $\seal_{\alpha_\xi}(h_\xi,n_\xi)$ is defined (where $n_\xi$ is the constant value of $f_{\alpha_\xi}$). So, we can take a $\prec$-minimal map $$\bar h_\xi:\dom h_\xi\cup \seal_{\alpha_\xi}(h_\xi,n_\xi)\to \oo$$ that is a rich extension of $h_\xi$. By the properties of the sealing function, $\bar h_\xi$ remains a uniformization of $\mathbf f\uhr \alpha_\xi+1$. Now, take the $\prec$-minimal rich extension $h_{\xi+1}$ of $\bar h_\xi$ that uniformizes $\mathbf f\uhr \alpha_{\xi+1}$; this exists by Lemma \ref{lm:unifextend}. The minimal choices and that $\mathbf f\uhr \alpha_{\xi+1}\in W_{\alpha_{\xi+1}}$ implies that $h_{\xi+1}\in \mc H_{\alpha_{\xi+1}}$.




In limit steps $\xi$, we simply take unions: $ h_\xi=\cup_{\zeta<\xi} h_\zeta$. This map is uniquely definable from the parameters $\mathbf C\uhr \delta, D\cap \delta,\textbf f\uhr \delta, W_{<\delta}$ (due to the canonical choices we made along the way), and these parameters are all in $W_\delta$. Hence $h_\xi$ is $W_{<\delta+1}$-definable as desired i.e., $h_\xi\in \mc H_{\alpha_\xi}$.
\medskip

This finishes the induction and hence the construction of the $T$-uniformization $h$. 

\medskip

To summarize, we constructed an Aronszajn tree $T\subseteq \ssqt$ that satisfies $\unifc{\oo}{T}{\mathbf C}$.


%

\end{tproof}

 Our next goal is to prove the second theorem about Suslin uniformizations, which will be quite similar. We will construct $T$ as a subtree of $2^{<\omg}$ instead of $\ssqt$, and make sure that the sealing functions preserve maximal antichains that are guessed by the diamond sequence. 
 
 We need a small definition before the proof: suppose that $S$ is a countable tree of height $\alpha$ and $X\subs S$ is a maximal antichain. We say that $X$ is \emph{$\alpha$-reflecting} if the set $$\{\vareps\in \alpha\cap \lim(\omg):X\cap S_{<\vareps} \textmd{ is a maximal antichain in }S_{<\vareps}\}$$ is cofinal in $\alpha$.
 
\begin{tproof}[Proof of Theorem \ref{thm:nonspecial}] We still use $\textbf W$ to denote the $\diamondsuit^+$-sequence. $T$ will be built by constructing its initial segments $T_{<\alpha}\subs 2^{<\omg}$ along with $\mc H_\alpha$ and $\seal_\alpha$ with the following properties:

\begin{enumerate}[(a)]
\item $T_{<\alpha}\subs 2^{<\omg}$ is a countable, downward closed, normal tree\footnote{I.e., any node has incomparable extensions.} which is uniquely definable from $W_{< \alpha+1}$,
\item $T_{<\beta}$ is an end extension of $T_{<\alpha}$ for $\alpha<\beta$,
\end{enumerate}
and for any limit $\alpha<\omg$,
\begin{enumerate}[(a)]
\setcounter{enumi}{2}
\item $\mc H_\alpha$ is the set of all $W_{< \alpha+1}$-definable rich maps $h:S\to \oo$ so that $S\subs T_{<\alpha}$ is downward closed and normal,
\item $\seal_\alpha$ is uniquely definable from $W_{< \alpha+1}$, and
 \item for all $h\in \mc H_\alpha$,
 \begin{enumerate}[(i)]
  \item  $\dom (h)\cup \seal_\alpha(h,n)$ is a downward closed subtree of $T_{\leq \alpha}$, 
  \item $h(t\uhr \xi)=n$ for all $t\in \seal_\alpha(h,n)$ and almost all $\xi\in C_\alpha$, and
  \item if $X\in W_\alpha$ is an $\alpha$-reflecting maximal antichain in $\dom h$ then any $t\in \seal_\alpha(h,n)$ is above some element of $X$.
 \end{enumerate}
 
 \end{enumerate}



As before, the interesting case in the construction is when $T_{<\alpha}$ is constructed already for a limit $\alpha$ and we aim to define $T_\alpha$. For any $h\in \mc H_\alpha$, $s\in \dom h$ and $n\in \oo$, we now consider the poset $\mc P_{s,h,n}$ of all finite, non empty chains $p\subs \dom h$ above $s$ so that for all $t\in p^\downarrow$ above $s$,  if $\xi\in C_\alpha$ and $\xi>\htt{s}$ then $h(t\uhr \xi)=n$.

We take a finite support product $\mc P=\Pi_{i<\oo}\mc P_i$ so that for all $s,h,n$ as above, there are infinitely many $i<\oo$ so that $\mc P_i=\mc P_{s,h,n}$. In order to find the right generic branches, we need a new density lemma.

\begin{clm}
 Suppose that $X\subs \dom h$ is an $\alpha$-reflecting maximal antichain. Then $$D_X=\{p\in \mc P_{s,h,n}:p^\downarrow\cap X\neq \emptyset\}$$ is dense in $\mc P_{s,h,n}$.
\end{clm}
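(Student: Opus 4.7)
The plan is to show that any $p\in \mc P_{s,h,n}$ with maximum element $t_k$ can be extended to some $p'\in D_X$. The key idea is to extend $t_k$ past some element of $X$ by combining richness of $h$ (to satisfy the $h$-value-$n$ constraint at the $C_\alpha$-levels) with the $\alpha$-reflecting property of $X$ (to force the extension to cross $X$), while strategically choosing a target height in a gap of $C_\alpha$.

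Since the $\alpha$-reflecting set is cofinal in $\alpha$ and $C_\alpha$ is countable, I would first fix a limit $\vareps<\alpha$ in the reflecting set with $\vareps>\htt(t_k)$ and $\vareps\notin C_\alpha$. Because $C_\alpha$ has order type $\omega$ and $\vareps<\alpha$, $C_\alpha\cap\vareps$ is finite; letting $\vareps_-$ be its maximum and $\vareps_+$ the next element of $C_\alpha$ (with an obvious convention when $\vareps_-$ does not exist), we have $\vareps_+>\vareps$, so $(\vareps_-,\vareps_+)$ is a gap of $C_\alpha$ around $\vareps$. Next, using richness iteratively, I would extend $t_k$ through the finitely many $C_\alpha$-levels in $(\htt(t_k),\vareps_-]$ to produce a chain $t_k<\bar t_0<\dots<\bar t_l$ in $\dom h$ with $\htt(\bar t_j)$ enumerating these levels and $h(\bar t_j)=n$; then extend $\bar t_l$ once more (using richness or the normality of $\dom h$) to some $t^*\in \dom h$ of height $\vareps'\in(\vareps_-,\vareps)$.

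By the reflecting hypothesis, $X\cap S_{<\vareps}$ is a maximal antichain in $S_{<\vareps}$, so $t^*\in S_{<\vareps}$ must be comparable with some $x\in X\cap S_{<\vareps}$. If $x\leq t^*$, then $x\in (t^*)^\downarrow$ and $p'=p\cup\{\bar t_0,\dots,\bar t_l,t^*\}$ already lies in $D_X$. Otherwise $t^*<x$, and then $\htt(x)\in(\vareps',\vareps)\subseteq(\vareps_-,\vareps_+)$, so I would simply adjoin $x$ on top and set $p'=p\cup\{\bar t_0,\dots,\bar t_l,t^*,x\}$. The main delicate point---and the reason for arranging $\vareps\notin C_\alpha$---is verifying the $h$-value constraint on $p'$ in this second case: along the forced branch from $t^*$ up to $x$, all heights fall in the gap $(\vareps_-,\vareps_+)$ and thus miss $C_\alpha$ entirely, so the uncontrolled values of $h$ on that segment impose no restriction. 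At every other $C_\alpha$-level on the branch from $s$ to $x$, we are either below $\htt(t_k)$ (handled by the hypothesis $p\in \mc P_{s,h,n}$) or at some $\htt(\bar t_j)$ where $h$ was arranged to equal $n$, so $p'\in \mc P_{s,h,n}\cap D_X$ extends $p$ in either case.
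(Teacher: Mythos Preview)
Your argument is essentially the paper's proof: pick $\vareps$ in the reflecting set above the top of $p$, use richness of $h$ to climb through the finitely many $C_\alpha$-levels below $\vareps$ with value $n$, and then use maximality of $X\cap S_{<\vareps}$ to meet $X$ without encountering any further $C_\alpha$-levels. The extra intermediate node $t^*$ is harmless but unnecessary (the paper simply applies maximality to the top node $\bar t_l$ itself).

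One small gap: your justification for choosing $\vareps\notin C_\alpha$ (``since $C_\alpha$ is countable'') does not work, because the reflecting set is also countable; nothing prevents it from being contained in $C_\alpha$ (e.g., if $C_\alpha$ happens to consist of limit ordinals). Fortunately this condition is not needed. Even if $\vareps\in C_\alpha$, your $\vareps_-$ is still the maximum of $C_\alpha\cap\vareps$, and any $x\in X\cap S_{<\vareps}$ has $\htt(x)<\vareps$, so the segment from $\bar t_l$ (or $t^*$) up to $x$ lies entirely in $(\vareps_-,\vareps)$, which contains no $C_\alpha$-levels regardless. Drop the clause $\vareps\notin C_\alpha$ and your proof is complete and matches the paper's.
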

\begin{cproof}Fix some $\alpha$-reflecting maximal antichain $X\subs S=\dom h$ and suppose that $p_0\in P_{s,h,n}$ is arbitrary. Let $t_0=\max_S(p_0)$ and find some limit $\vareps<\alpha$ above $t_0$ so that $X\cap S_{<\vareps}$ is a maximal antichain in $S_{<\vareps}$. List the elements of $C_\alpha\cap (\vareps\setminus \htt{(t_0)})$ as $\alpha_1<\alpha_2<\dots<\alpha_{k-1}$. Using that $h$ is rich, we can find $t_0<t_1<t_2<\dots<t_{k-1}$ so that $t_i\in S_{\alpha_i}$ and $h(t_i)=n$ for $1\leq i<k$. In turn, $p_1=p_0\cup\{t_i:i<k\}\in P_{s,h,n}$ is an extension of $p_0$.

Now,  either $X\cap t_{k-1}^\downarrow\neq \emptyset$ already (which means $p_1\in D_X$) or we can find some $t\in S_{<\vareps}$ above $t_{k-1}$ so that $t^\downarrow \cap X\neq \emptyset$. Note that no new elements of $C_\alpha$ appear between the heights of  $t_{k-1}$ and $t$, so we can define $p_2=p_1\cup \{t\}$ which is an extension of  $p_0$ in $D_X$.  
 
\end{cproof}

 
 Now, it is easy to find a sufficiently generic filter $G\subs \mc P$ so that we get pairwise different branches $b_i=\cup G(i)$ in $T_{<\alpha}$ from the $i$th coordinate of $G$ and $b_i^\downarrow \cap X\neq \emptyset$ for any $i<\oo$ and $\alpha$-reflecting maximal antichain $X\subs \dom h$ so that $X\in W_\alpha$ (where $P_i=P_{s,h,n}$). 

We define $T_\alpha$ by adding unique upper bounds $t^*_i$ for each branch $b_i$, just as in the proof of Theorem \ref{thm:dmplus}. The sealing function $\seal_\alpha$ is defined as follows: given $h\in \mc H_\alpha$ and $n<\oo$, let
$$\seal_\alpha(h,n)=\{t^*_i:i<\oo,\mc P_i=\mc P_{s,h,n},s\in \dom h\}.$$

Since the poset $\mc P$ is uniquely definable from $\mc H_\alpha$, we can make a canonical choice of $G$, and so $T_\alpha$ and $\seal_\alpha$ are $W_{<\alpha+1}$-definable as well. 

\medskip

This finishes the construction of an Aronszajn tree $T=\bigcup_{\alpha<\omg}T_{<\alpha}\subs 2^\omg$.

\medskip

Suppose that we are given some monochromatic $\oo$-colouring $\textbf f$ of $\mathbf C$. The diamond sequence and sealing functions provide a unique way to construct a sequence of maps $(h_\xi)_{\xi<\omg}$ along a club $D=\{\alpha_\xi:\xi<\omg\}\subs \omg$ so that

\begin{enumerate}
\item $h_\xi\in \mc H_{\alpha_\xi}$, 
\item $(\dom h_{\xi+1})\cap T_{\alpha_\xi}=\seal_{\alpha_\xi}(h_\xi,n_\xi)$  where $n_\xi$ is the constant value of $f_{\alpha_\xi}$, and
\item $h_\xi$ uniformizes $\textbf f\uhr \alpha_\xi+1$.
\end{enumerate}

So $h=\bigcup_{\xi<\omg}h_\xi$ is a $T$-uniformization of $\mathbf f$. We need to show that $\dom h=S$ is Suslin; this amounts to proving that if $X\subs S$ is a maximal antichain then $X$ is countable.  The set $\tilde D$ of those $\alpha<\omg$ so that $X\cap S_{<\alpha}\in W_\alpha$ and $X\cap S_{<\alpha}$ is an $\alpha$-reflecting maximal antichain in $S_{<\alpha}$ contains a club. So, we can find some $\xi<\omg$ so that $\alpha_\xi\in \tilde D$. In turn, by the properties of our sealing function, any $t\in \seal_{\alpha_\xi}(h_\xi,n_\xi)=S_{\alpha_\xi}$ extends some element of $X\cap S_{<\alpha_\xi}$. So we must have $X=X\cap S_{<\alpha_\xi}$ and hence $X$ is countable.

\medskip
This finishes the proof of the theorem.

\end{tproof}

 \section{Uniformization and large antichains}\label{sec:largeac}
 
 First, let us explain why  large uncountable antichains appear even in the non-special tree $T$ of Theorem \ref{thm:nonspecial}. Fix a ladder system $\mathbf C$, and consider the colourings $\mathbf f^\nu$ for $\nu\in \lim \omg$ so that $f^\nu_\alpha$ is constant 1 on $C_\alpha$ if $\nu\leq \alpha<\omg$, otherwise $f^\nu_\alpha$ is constant 0. So each $\mathbf f^\nu$ is a monochromatic 2-colouring of $\mathbf C$.
 
 We can arrange the $\diamondsuit^+$-sequence $(W_\alpha)_{\alpha<\omg}$ so that $\lim(\omg)\cap \alpha,\mathbf f^\nu\uhr \alpha\in W_\alpha$ for $\nu\leq \alpha$. So, as in the proof of Theorem \ref{thm:nonspecial}, we can build all the uniformizations $h^\nu$ for the colouring $\mathbf f^\nu$  using the same club $D=\lim \omg$. 
 
 Let $S^\nu\subs T$ be the domain of $h^\nu$. Since $\mathbf f^\mu\uhr \mu=\mathbf f^\nu \uhr \mu$ for $\mu<\nu$,  we must have $(S^\mu)_{<\mu}=(S^\nu)_{<\mu}$ and $h^\nu$ agrees with $h^\mu$ here. Now, $f^\mu_\mu=1\neq 0=f^\nu_\mu$ implies that $$(S^\mu)_{\mu}\cap (S^\nu)_{\mu}=\emptyset$$ whenever $\mu<\nu<\omg$. In turn, any selection $s_\mu\in (S^\mu)_\mu$ defines an uncountable antichain $\{s_\mu:\mu\in \lim \omg\}$ of $T$ meeting a club set of levels.

\medskip

The antichain above appeared because the map that assigned the uniformization to the colouring was continuous: if two colourings agreed up to some level, their corrensponding uniformizations agreed up to that level as well. However, the fact alone that the above defined colourings all have $T$-uniformizations does not necessarily mean that $T$ has large antichains.

\begin{thm}\label{thm:family}Suppose that $\mc F$ is a family of $\aleph_1$ many ladder system colourings.\footnote{The ladder systems may vary with the colouring.} Then
if $\diamondsuit$ holds then  there is a Suslin tree $T$ so that any $\textbf f\in \mc F$ has a full $T$-uniformization.
\end{thm}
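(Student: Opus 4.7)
The plan is to enumerate the family as $\mc F = \{\mathbf f^\xi : \xi<\omg\}$, where each $\mathbf f^\xi = (f^\xi_\alpha)_\alpha$ colours a ladder system $\mathbf C^\xi = (C^\xi_\alpha)_\alpha$, fix a $\diamondsuit$-sequence $(W_\alpha)_{\alpha<\omg}$, and then jointly construct a Suslin tree $T\subseteq 2^{<\omg}$ together with maps $\varphi^\xi:T\to \omega$ for every $\xi<\omg$ by recursion on the levels. At each stage $\alpha$ I would maintain that $T_{<\alpha}$ is a countable normal tree, that each $\varphi^\xi$ is defined on all of $T_{<\alpha}$ and uniformizes $\mathbf f^\xi$ at every limit level below $\alpha$, together with a joint finite richness condition on $(\varphi^\xi)_{\xi<\omg}$: for every $s\in T_{<\alpha}$, every finite $F\subseteq\omg$, every choice $(n_\xi)_{\xi\in F}\in \omega^F$, and every $\eta$ with $\htt(s)<\eta<\alpha$, some $t\in T_\eta$ above $s$ satisfies $\varphi^\xi(t)=n_\xi$ for all $\xi\in F$. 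Successor stages are routine: add incomparable successors above each maximal node and assign fresh $\omega$-values to the new nodes to preserve joint richness.

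The interesting work happens at a limit stage $\alpha$. I would use $W_\alpha$ to guess a maximal antichain $X\subseteq T_{<\alpha}$ (coded into $H(\aleph_1)$ in the usual way) and then, for every $s\in T_{<\alpha}$, build a cofinal branch $b_s$ through $s$ in $T_{<\alpha}$ so that (a) $b_s$ meets $X$ whenever $X$ is genuinely a maximal antichain, and (b) for every $\xi<\omg$, $\varphi^\xi(b_s\uhr\eta) = f^\xi_\alpha(\eta)$ for almost all $\eta\in C^\xi_\alpha$. Define $T_\alpha = \{\bigcup b_s: s\in T_{<\alpha}\}$ and extend each $\varphi^\xi$ to the new top nodes arbitrarily. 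Suslin-ness of $T$ then follows by the standard antichain sealing: any uncountable maximal antichain $A$ of $T$ is caught by $(W_\alpha)$ on a stationary set, and at such stages the construction of the $b_s$'s forces $A\subseteq T_{<\alpha}$, contradicting uncountability.

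The principal obstacle is (b), which imposes $\omg$-many simultaneous conditions on the single countable chain $b_s$. My plan to overcome this uses crucially that $\mc F$ is given in advance: I would fix at the outset a bookkeeping that at each limit stage $\alpha$ produces a canonical $\omega$-enumeration of the $\xi$'s still needing attention, and at each successor stage commit $\varphi^\xi(r)$-values in such a way that every later branch $b_s$ terminating at a limit $\alpha$ has only finitely many ``failure levels'' $\eta\in C^\xi_\alpha$ for each $\xi$. Concretely, at step $k$ of the recursive construction of $b_s$, joint finite richness together with Fact \ref{fact:suslin} yields an extension $t_{k+1}$ above $t_k$ that meets $X$ (once, at some step) and correctly realizes $f^{\xi_k}_\alpha(\eta)$ at all $\eta\in C^{\xi_k}_\alpha$ between $\htt(t_k)$ and $\htt(t_{k+1})$ for the $\xi_k$ enumerated at step $k$; the $\xi$'s not yet enumerated are accommodated because the $\diamondsuit$-guided prearrangement ensures that their ladder sets $C^\xi_\alpha$ are eventually contained in levels where the committed $\varphi^\xi$-values along the branch already match $f^\xi_\alpha$. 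The hardest point is verifying that the prearranged values on $T_{<\alpha}$ admit such a branch through every $s$; this is where $\diamondsuit$ is used not only for antichain sealing but also to synchronize the bookkeeping of $\mc F$ against the successor-stage assignments, much as in the proofs of Theorems \ref{thm:dmplus} and \ref{thm:nonspecial}.
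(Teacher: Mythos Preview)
There is a genuine gap at the limit stage. You maintain all $\aleph_1$ uniformizations $\varphi^\xi$ from the outset, so at every limit $\alpha$ the branch $b_s$ must satisfy $\varphi^\xi(b_s\uhr\eta)=f^\xi_\alpha(\eta)$ for cofinitely many $\eta\in C^\xi_\alpha$, simultaneously for every $\xi<\omg$. Your diagonal construction of $b_s$ enumerates indices in type $\omega$ and therefore controls only countably many $\xi$; the remaining $\aleph_1$ many are deferred to an unspecified ``$\diamondsuit$-guided prearrangement'' at successor stages. No mechanism for this is given, and none is apparent: the $\varphi^\xi$-values on $T_{<\alpha}$ are committed before $b_s$ is chosen, different $s\in T_{<\alpha}$ demand different branches through different nodes, and the target colouring $f^\xi_\alpha$ varies with $\alpha$, so there is no way for successor-stage assignments to anticipate which values will be required along which yet-undetermined branches at which future limits. (The invocation of Fact~\ref{fact:suslin} is also misplaced: that fact reflects uncountable sets through elementary submodels and plays no role in building a branch inside a countable tree.)

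The paper sidesteps the problem by staging the uniformizations: the map $h^\nu$ is introduced only at stage $\nu$, where it is defined fresh on the already-built $T_{<\nu}$ via a density argument (as in Lemma~\ref{lm:unifextend}) so that it uniformizes $\mathbf f^\nu\uhr\nu$ and satisfies joint richness together with the earlier $(h^\mu)_{\mu<\nu}$. Thus at any limit stage $\alpha$ only the countably many maps $(h^\nu)_{\nu\leq\alpha}$ exist, and the branch $b_i$ needs to uniformize only $\{f^\nu_\alpha:\nu\leq\alpha\}$. This is handled by listing $\alpha+1$ in type $\omega$ and, at step $n$ of the branch construction, applying simultaneous richness for the finite set $\{\nu_0,\dots,\nu_n\}$ at each relevant ladder point between heights $\htt(t_n)$ and $\htt(t_{n+1})$. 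No prearrangement is needed.
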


One can similarly show, without any extra assumptions beyond ZFC, that for any family $\mc F$ of $\aleph_1$ many ladder system colourings, there is a (necessarily special) Aronszajn tree $T\subseteq \ssqt$ so that any $\textbf f\in \mc F$ has a full $T$-uniformization.

%

\begin{tproof}
Let $\mc F=\{\textbf f^\nu:\nu<\omg\}$ where $\mathbf f^\nu=(f^\nu_\alpha)_{\alpha\in \lim(\omg)}$ and $\dom f^\nu_\alpha=C^\nu_\alpha$. Let $\mathbf W$ denote the diamond sequence.

We aim to construct a Suslin tree $T\subs 2^{<\omg}$ and the uniformizations $h^\nu:T\to \oo$ for $\textbf f^\nu$  simultaneously for each $\nu<\omg$. So, by induction on $\alpha<\omg$, we construct $T_{<\alpha}$ and $(h^\nu_ \alpha)_{\nu< \alpha}$ so that 
\begin{enumerate}
\item $T_{<\alpha}$ is a countable, downward closed and normal subtree of $2^{<\omg}$,
 \item $h^\nu_ \alpha:T_{<\alpha}\to \oo$ is a uniformization of $f^\nu\uhr \alpha$,
 \item for all $\nu< \alpha<\beta$, $T_{<\beta}$ is an end-extension of $T_{<\alpha}$ and $h^\nu_\beta$ is an end-extension of $h^\nu_ \alpha$,
 \item\label{it:sus} if $\alpha$ is limit and $W_\alpha$ is a maximal antichain of $T_{<\alpha}$ then any $t\in T_\alpha$ extends some element of $W_\alpha$,
 \item\label{it:simul} for any $\Delta\in [\alpha]^{<\oo}$, $n:\Delta\to \oo$,  $\vareps<\alpha$ and $s\in T_{<\vareps}$ there are infinitely many $t\in T_\vareps$ above $s$ so that $$h^\nu(t)=n(\nu)$$ for all $\nu\in \Delta$.
\end{enumerate}

The latter condition is a simultaneous version of the richness condition that we introduced before the proof of Theorem \ref{thm:dmplus}. It is clear that if we succeed in building these objects then $T=\bigcup_{\alpha<\omg}T_{<\alpha}$ is a Suslin tree and $h^\nu=\bigcup_{\nu< \alpha<\omg}h^\nu_\alpha$ is a full $T$-uniformization of $\mathbf f^\nu$.


 
 \medskip
 
 
 In limit steps, we can take unions and all the assumptions are preserved.  As usual, the non-trivial step in our construction is when we are given $T_{<\alpha}$ for some limit $\alpha$ along with $h^\nu_\alpha:T_{<\alpha}\to \oo$ for $\nu<\alpha$ with the above conditions. We need to define $T_\alpha$, which is the next level of the tree, extend all the maps $h^\nu_\alpha$ to $T_\alpha$ (which gives $h^\nu_{\alpha+1}$), and define $h^\alpha_{\alpha+1}$ all while preserving condition (\ref{it:simul}).
 
 We first construct a map $h^\alpha_\alpha:T_{<\alpha}\to \oo$ that will serve as the restriction of $h^\alpha_{\alpha+1}$ to $T_{<\alpha}$.
 
 \begin{clm}
There is a map $h^\alpha_\alpha:T_{<\alpha}\to \oo$ that uniformizes $\textbf f^\alpha\uhr \alpha$ and so that for any $\Delta\in [\alpha+1]^{<\oo}$, $n:\Delta\to \oo$,  $\vareps<\alpha$ and $s\in T_{<\vareps}$, there are infinitely many $t\in T_\vareps$ above $s$ so that $$h^\nu(t)=n(\nu)$$ for all $\nu\in \Delta$. 
 \end{clm}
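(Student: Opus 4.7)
The plan is to obtain $h^\alpha_\alpha$ as the union of a sufficiently generic filter over a countable poset $\mathcal{P}$ of finite approximations, adapting the uniformization forcing of Section \ref{sec:forceunif} to the tree $T_{<\alpha}$. Define $\mathcal{P}$ to be the set of finite functions $p:\dom p\to\oo$ with $\dom p\subseteq T_{<\alpha}$ \emph{tree-$\mathbf{C}^\alpha$-closed}, meaning that whenever $t_1,t_2\in\dom p$ lie at limit heights $\beta_1,\beta_2<\alpha$ and have a common predecessor at a height $\xi\in C^\alpha_{\beta_1}\cap C^\alpha_{\beta_2}$, that predecessor also belongs to $\dom p$. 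Since $C^\alpha_\beta\cap\eta$ is finite for every $\eta<\beta$ (as $C^\alpha_\beta$ has type $\oo$), the tree analog of Observation \ref{clm:cl} shows that finite sets have finite tree-$\mathbf{C}^\alpha$-closures. Order $\mathcal{P}$ by $q\leq p$ iff $q\supseteq p$ and, for every $t^*\in\dom p$ of limit height $\beta$ and every $\xi\in C^\alpha_\beta$ with $t^*\uhr\xi\in\dom q\setminus\dom p$, $q(t^*\uhr\xi)=f^\alpha_\beta(\xi)$. The closure condition ensures that such forced values are single-valued, so extensions are always consistent.

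Two families of dense sets suffice. First, for each $t\in T_{<\alpha}$, $D_t=\{p\in\mathcal{P}:t\in\dom p\}$ is dense: extend $\dom p$ to the tree-$\mathbf{C}^\alpha$-closure of $\dom p\cup\{t\}$, assigning the uniquely determined forced values where required and arbitrary values elsewhere. Second, for each $\Delta\in[\alpha+1]^{<\oo}$ with $\alpha\in\Delta$, each $n:\Delta\to\oo$, each $\vareps<\alpha$, each $s\in T_{<\vareps}$, and each $k<\oo$, the set $R^k_{\Delta,n,\vareps,s}$ consisting of those $p$ containing at least $k$ distinct points $t\in T_\vareps$ above $s$ with $p(t)=n(\alpha)$ and $h^\nu_\alpha(t)=n(\nu)$ for $\nu\in\Delta\setminus\{\alpha\}$ is dense. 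Indeed, the inductive richness hypothesis (\ref{it:simul}) applied to $\Delta\setminus\{\alpha\}$ supplies infinitely many $t\in T_\vareps$ above $s$ realizing the required values $h^\nu_\alpha(t)=n(\nu)$ for $\nu<\alpha$, while only finitely many such $t$ are \emph{forbidden} in the sense that $t=t'\uhr\vareps$ for some $t'\in\dom p$ of limit height $\beta$ with $\vareps\in C^\alpha_\beta$ and $f^\alpha_\beta(\vareps)\neq n(\alpha)$.

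Since $T_{<\alpha}$ is countable, only countably many such dense sets arise, so a filter $G\subseteq\mathcal{P}$ meeting them all can be built by the standard enumerate-and-extend construction. Set $h^\alpha_\alpha=\bigcup G$. The sets $D_t$ give $\dom h^\alpha_\alpha=T_{<\alpha}$. For uniformization, fix $t\in T_\beta$ with $\beta<\alpha$ limit and pick $p_0\in G\cap D_t$; the ordering on $\mathcal{P}$ then forces $h^\alpha_\alpha(t\uhr\xi)=f^\alpha_\beta(\xi)$ for every $\xi\in C^\alpha_\beta$ with $t\uhr\xi\notin\dom p_0$, and the exceptional set $\{\xi\in C^\alpha_\beta:t\uhr\xi\in\dom p_0\}$ is finite since $\dom p_0$ is. Meeting each $R^k_{\Delta,n,\vareps,s}$ yields the new simultaneous richness for tuples involving $\alpha$; the richness condition for $\Delta\subseteq\alpha$ is inherited unchanged from the inductive hypothesis on $(h^\nu_\alpha)_{\nu<\alpha}$.

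The main technical obstacle is verifying the density of $R^k_{\Delta,n,\vareps,s}$: one must confirm that the simultaneous richness already established for $(h^\nu_\alpha)_{\nu<\alpha}$ is not destroyed by the new ladder-agreement constraints of $\leq$. The saving observation is that $\dom p$ is finite, which confines the set of forbidden candidates at level $\vareps$ to a finite collection and leaves a cofinite supply of admissible nodes among those already guaranteed to be rich by the inductive construction; the tree-$\mathbf{C}^\alpha$-closure handles the compatibility of forced values on the newly closed nodes uniformly, so no cascading conflicts arise.
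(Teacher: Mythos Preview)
Your argument is correct, but it takes a noticeably different route from the paper's. The paper simply invokes the poset $\mc P$ from Lemma \ref{lm:unifextend}: conditions there are maps $p$ whose domain is the downward closure of a \emph{finite} set $D^p\subseteq T_{<\alpha}$, and which already uniformize $\mathbf f^\alpha$ on their domain. Since such a $\dom p$ meets each level $T_\vareps$ in only finitely many points, the inductive simultaneous-richness hypothesis for $(h^\nu_\alpha)_{\nu<\alpha}$ immediately supplies fresh nodes $t\in T_\vareps\setminus\dom p$ at which one may freely assign $p(t)=n(\alpha)$; no closure bookkeeping is needed because uniformization is built into the conditions rather than into the ordering.

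Your construction instead adapts the ccc poset of Section \ref{sec:forceunif} to the tree setting, using finite domains with a tree analogue of $\mathbf C$-closure and encoding uniformization in the extension relation. This works, but it buys you extra verification: you must argue that tree-$\mathbf C^\alpha$-closures of finite sets are finite (which needs a genuine tree version of Observation \ref{clm:cl}, handling in particular the case of two nodes at the \emph{same} limit height, where the relevant ladder intersection is infinite but the common predecessors are bounded by the splitting height), and you must check that adding the $t_i$'s and then closing does not introduce conflicting forced values. The paper's poset sidesteps all of this by absorbing the uniformization constraint into the conditions themselves, at the cost of allowing countable (rather than finite) domains.
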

\begin{cproof}
 It is straightforward to check that the forcing $\mc P$ defined in the proof of Lemma \ref{lm:unifextend} can be used to provide this map $h^\alpha_\alpha\uhr T_{<\alpha}$, by choosing the filter $G\subs \mc P$ to be generic enough over the maps $\{h^\nu_\alpha:\nu<\alpha\}$.
\end{cproof}

 Our next goal is to define $T_\alpha$: first, list $T_{<\alpha}$ as $\{s_i:i<\oo\}$, each node infinitely often. We will inductively construct cofinal branches $b_i\subs T_{<\alpha}$ above $s_i$ so that 
 \begin{enumerate}[(a)]
  \item $b_i\neq b_j$ for all $j<i$,
  \item $b_i$ extends some element of $W_\alpha$ if $W_\alpha$ is a maximal antichain in $T_{<\alpha}$, and
  \item for any $\nu\leq \alpha$, and almost all $\xi\in C^\nu_\alpha $, $$h^\nu_\alpha(\cup b_i\uhr \xi)=f^\nu_\alpha(\xi).$$
 \end{enumerate}
If we succeed then we let $t^*_i=\cup b_i$ and $T_\alpha=\{t^*_i:i<\oo\}$. The last condition ensures that for any $\nu\leq \alpha$, $h^\nu_\alpha$ uniformizes $\mathbf f^\nu\uhr \alpha+1$ as well and not just $\mathbf f^\nu\uhr \alpha$.

Suppose that $b_j$ is already defined for $j<i$. Let $t_0\in T_{<\alpha}$ extend $s_i$ so that $t_0\notin \cup_{j<i} b_j^\downarrow$ and $t_0$ extends some element of $W_\alpha$ if  $W_\alpha$ is a maximal antichain in $T_{<\alpha}$. List $\alpha+1$ as $\{\nu_n:n<\oo\}$.  We now build a sequence $t_0<t_1<\dots<t_n<\dots $ for $n<\oo$ in $T_{<\alpha}$ so that
\begin{enumerate}[(i)]
 \item $\alpha_n=\htt(t_n)$ is the $n$th element of $C_\alpha^{\nu_0}$,\footnote{The only role of this assumption is to make sure that $\sup_{n\in \oo}\htt(t_n)=\alpha$.}
 \item if $\xi\in (\alpha_{n+1}+1)\setm (\alpha_n+1)$ then for any $k\leq n$ such that $\xi\in C_\alpha^{\nu_k}$,  $$h^{\nu_k}_\alpha(t_{n+1}\uhr \xi)=f^{\nu_k}_\alpha(\xi).$$
\end{enumerate}
Clearly, the branch $b_i=\cup_{n<\oo}t_n^\downarrow$ will satisfy our requirements.

\medskip

 Given $t_n$, we describe the construction of $t_{n+1}$. Consider the finite set $$\bigl (\bigcup\{ C_\alpha^{\nu_k}:k\leq n\}\bigr )\cap(\alpha_{n+1}+1)\setm (\alpha_n+1),$$ and let $\{\xi_l:1\leq l<\ell\}$ denote the increasing enumeration. We need to find an increasing sequence in $T_{<\alpha}$ $$t_n=t_{n,0}<t_{n,1}<\dots<t_{n,l}<\dots<t_{n,\ell-1}=t_{n+1}$$  so that $\htt(t_{n,l})=\xi_l$ and if $\xi_l\in C^{\nu_k}_\alpha$ for some $k\leq n$ then $h^{\nu_k}_\alpha(t_{n,l})=f^{\nu_k}_\alpha(\xi_l)$ for $1\leq l<\ell$. Given $t_{n,l}$, we simply apply the simultaneous richness property (\ref{it:simul}) of the colourings $(h^{\nu_k}_\alpha)_{k\leq n}$ to find $t_{n,l+1}$ (this is done by setting $\vareps=\xi_{l+1}$, $\Delta=\{\nu_k:k\leq n, \xi_{l+1}\in C^{\nu_k}_\alpha\}$ and $n(\nu_k)=f^{\nu_k}_\alpha(\xi_{l+1})$).

 \medskip
 
 Now that we defined these branches and the level $T_\alpha$, our last job is to define the maps $h^\nu_{\alpha+1}$ (extending $h^\nu_\alpha$) on the new level $T_\alpha$ for $\nu\leq \alpha$. The only requirement to keep in mind is the simultaneous richness condition (\ref{it:simul}) with $\vareps=\alpha$. One can do this by a simple induction of length $\oo$ enumerating all these countably many requirements, or by using the next claim.
 
 \begin{clm}
  Suppose that $M$ is a countable elementary submodel so that $T_{\leq \alpha}\in M$, and let $(c^\nu)_{\nu\leq \alpha}$ be mutually $M$-generic Cohen-functions from $T_\alpha$ to $\oo$. If we let $h^\nu_{\alpha+1}(t)=c^\nu(t)$ for $t\in T_\alpha$ then  $(h^\nu_{\alpha+1})_{\nu\leq \alpha}$ satisfies condition (\ref{it:simul}). 
 \end{clm}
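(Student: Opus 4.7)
The plan is to present $(c^\nu)_{\nu \leq \alpha}$ as the coordinate projections of an $M$-generic filter $G$ for the finite-support product
\[ \mathbb P = \prod_{\nu \leq \alpha}^{\mathrm{fin}} \Fn(T_\alpha, \omega), \]
and then verify condition (\ref{it:simul}) by a single density argument. Two preparatory remarks set the stage. First, because $T_{\leq \alpha}$ is countable and $T_{\leq \alpha} \in M$, elementarity yields $T_{\leq \alpha} \subseteq M$, and similarly $\alpha+1 \subseteq M$; so every dense subset of $\mathbb P$ defined from parameters in $T_{\leq \alpha} \cup (\alpha+1) \cup \omega$ sits inside $M$ and is met by $G$. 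Second, condition (\ref{it:simul}) for $(h^\nu_{\alpha+1})_{\nu \leq \alpha}$ need only be checked at the new level $\vareps = \alpha$: at levels $\vareps < \alpha$ the maps $h^\nu_{\alpha+1}$ agree with $h^\nu_\alpha$ (for $\nu<\alpha$) or with $h^\alpha_\alpha$ (for $\nu=\alpha$), and these already satisfy simultaneous richness by the inductive hypothesis together with the preceding Claim about $h^\alpha_\alpha$.

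For the case $\vareps = \alpha$, the relevant dense sets are
\[
D_{s,\Delta,n,k} = \bigl\{ p \in \mathbb P : \exists\ \text{distinct } t_0, \dots, t_{k-1} \in T_\alpha \text{ above } s,\ p(\nu)(t_j) = n(\nu)\ \forall \nu \in \Delta,\ j < k \bigr\},
\]
indexed by $s \in T_{<\alpha}$, $\Delta \in [\alpha+1]^{<\omega}$, $n : \Delta \to \omega$ and $k<\omega$. Each $D_{s,\Delta,n,k}$ lies in $M$ since its parameters do, and is dense in $\mathbb P$ for a single reason: by construction of $T_\alpha$ every node $s \in T_{<\alpha}$ was listed infinitely often in the enumeration $\{s_i : i < \omega\}$, hence has infinitely many successors in $T_\alpha$. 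Given any $q \in \mathbb P$, pick $k$ such successors of $s$ outside the finite set $\bigcup_{\nu \in \dom q} \dom q(\nu)$ and extend $\dom p \supseteq \dom q \cup \Delta$ by setting $p(\nu)(t_j) = n(\nu)$ for $\nu \in \Delta$ and $j<k$, with arbitrary values elsewhere. Mutual $M$-genericity then delivers, for every $k$, some $p \in G$ realising $k$ distinct $t \in T_\alpha$ above $s$ with $h^\nu_{\alpha+1}(t) = c^\nu(t) = n(\nu)$ for all $\nu \in \Delta$; letting $k$ vary yields infinitely many such $t$, which is exactly (\ref{it:simul}) at $\vareps = \alpha$.

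The only mildly delicate point in the argument is the bookkeeping observation $T_{\leq \alpha} \in M \Rightarrow T_{\leq \alpha} \subseteq M$, for which countability of the tree so far is essential; once this is noted, the rest is a textbook product-Cohen density argument and I do not expect any real obstacle.
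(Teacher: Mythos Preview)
Your proof is correct. The paper does not actually prove this claim: immediately after stating it, the author writes ``In any case, we leave the proof to the reader,'' so there is no argument to compare against. Your approach---reducing to the new level $\vareps=\alpha$ via the inductive hypothesis and the preceding claim on $h^\alpha_\alpha$, then running a product-Cohen density argument using that every $s\in T_{<\alpha}$ has infinitely many successors in $T_\alpha$---is exactly the intended routine verification.
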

 
 In any case, we leave the proof to the reader.
 
 \medskip
 
 With this, the $\alpha$th step of the induction is done and hence we finished our contruction of the tree $T$ with the full $T$-uniformizations. The fact that $T$ is Suslin follows form condition (\ref{it:sus}).

\end{tproof}

\section{Closing remarks and open problems}\label{sec:problems}

Hopefully, our exposition convinced the reader that there is a diverse theory behind Definition \ref{dfn:Tunif} well worth studying in detail. At the same time, we believe that our proofs demonstrated rather different  applications of a wide spectrum of guessing principles, starting from the weakest of weak diamonds ($2^{\aleph_0}<2^{\aleph_1}$), and length continuum diagonalization arguments in ZFC, to one of the strongest assumptions, $\diamondsuit^+$.
Finally, let us make a few remarks on Kurepa trees and emphasize the still unsolved questions that occured throughout our manuscript.

%


\subsection*{Remarks on Kurepa trees}

A \textit{Kurepa tree} is an $\aleph_1$-tree with at least $\aleph_2$ branches. 
One can construct Kurepa trees $T$ from $V=L$ or using strong diamond assumptions \cite{jech,devlin1982combinatorial}. Moreover, it is also possible to exclude Aronszajn subtrees in these construction. Let us also mention a rather flexible forcing approach due to Todorcevic \cite{MR631563} to achieve the same result (with the extra feature that complete binary subtrees are also avoided). Finally, we mention \cite[Lemma 5.8]{MR631563} where it is proved that any ccc forcing preserves that an $\aleph_1$-tree contains no Aronszajn subtrees.

Now, what can we say about $T$-uniformizations for a Kurepa tree $T$? First, let us look at models of CH or the weak diamond $\Phi^2_\omg$. Now, for any ladder system $\mathbf C$, there is a colouring $\mathbf f$ without an $\omg$-uniformization. So, if $\unif{\oo}{T}{\mathbf C}$ holds then any $T$-uniformization $\varphi$ of this particular $\mathbf f$ must be defined on an Aronszajn subtree of $T$. In particular, $T$ must have Aronszajn subtrees. Of course, we can make $\unif{\oo}{T}{\mathbf C}$ hold for a Kurepa tree 'artificially' by requiring that $T$ has an Aronszajn subtree $T_0$ that satisfies  $\unif{\oo}{T_0}{\mathbf C}$. We are not sure if this is necessarily the case.

\begin{prob} Is it consistent with CH that there is a Kurepa tree $T$ with $\unif{\oo}{T}{\mathbf C}$ so that for any Aronszajn subtree $T_0$ of $T$, $\unif{\oo}{T_0}{\mathbf C}$ fails.
\end{prob}

Now, if $\unif{\oo}{T}{\mathbf C}$ and $T$ has no Aronszajn subtrees then actually $\unif{\oo}{\omg}{\mathbf C}$ holds (since the domain of any $T$-uniformization includes an $\aleph_1$-branch). In particular, CH must fail. Note that the above cited results from \cite{MR631563} imply that even $MA_{\aleph_1}$ (and so $\unif{\oo}{\omg}{\textbf C}$ for all $\textbf C$) is consistent with the existence of Kurepa trees with no Aronszajn subtrees.
 \medskip

\subsection*{Various problems} 
First, it is natural to ask how crucial it was in Theorem \ref{thm:nonunifs} that all antichains are countable.

\begin{prob} Suppose CH holds. Does $\unifc{\oo}{T}{\textbf C}$ for some $\mathbf C$ imply the existence of stationary antichains in $T$? 

 \end{prob}


Upon reading Theorem \ref{thm:family}, it is also natural to consider the smallest size $\lambda^*$ of a family $\mc F$ of ladder system colourings such that there is no single Aronszajn $T$ such that any $\textbf f\in \mc F$ has a full $T$-uniformization. Note that under $\textmd{MA}_{\aleph_1}$, no such family exists since all ladder system colourings have an $\omg$-uniformization. However, if  $\lambda^*$ exists then it is at least $\aleph_2$ by (the remark after) Theorem \ref{thm:family}. 

\begin{prob} 
 Is it consistent that $\lambda^*$ exists and is bigger than $\aleph_2$?
\end{prob}

\medskip

In Section \ref{sec:forceunif}, we cited \cite[Theorem 6.2]{larson2001chain} stating that a Suslin tree always forces $\neg \unifc{2}{\omg}{\mathbf C}$. It seems unclear if that argument can be extended to show the existence of non $T$-uniformizable colourings.


\begin{prob}
Suppose $S$ is a Suslin tree and $\mathbf C$ and $T$ are ($S$-names for) a ladder system and Aronszajn tree. Does $S$ force $\neg \unif{2}{T}{\mathbf C}$ or even $\neg \unifc{2}{T}{\mathbf C}$?
 \end{prob}

In \cite{soukup2018model}, we presented a model of CH so that $\unif{\oo}{T}{\textbf C}$ holds for some Aronszajn trees $T$ but fails for others (e.g., for any Suslin tree in that model). It would be nice to solve the following.

\begin{prob}
Given two sufficiently different\footnote{One should possibly require no club-embedding of $T$ into $S$.} trees $S$ and $T$, can we force $\unif{\oo}{T}{\textbf C}$ together with the failure of $\unif{\oo}{S}{\textbf C}$?
\end{prob}

Another natural question is whether the number of colours plays a crucial role in the unformization property for trees.

\begin{prob}
Suppose that $T$ is an Aronszajn tree and $\textbf C$ is a ladder system. Does $\unif{2}{T}{\textbf C}$ imply $\unif{\oo}{T}{\textbf C}$?
\end{prob}

In the classical setting of $\omega_1$-uniformizations, the answer is no: in fact, if a ladder system $\textbf C=(C_\alpha)_{\alpha\in I}$ is only defined on a stationary, co-stationary set $I\subs \omg$ then the conjunction of CH plus $\unif{2}{\omg}{\textbf C}$ and $\neg \unif{\oo}{\omg}{\textbf C}$ are consistent  \cite{whitehead2}. 

The following question is inspired by Corollary \ref{cor:cunif} and concerns the classical $\omg$-uniformization theory.

\begin{prob}
Is it consistent that $\unifc{\oo}{\omg}{\textbf C}$ holds but $\unif{2}{\omg}{\textbf C}$ fails for some ladder system $\mathbf C$?
\end{prob}

 We conjecture that the answer is yes, and one might start by looking at the \cite{shelah1989consistent} and \cite{whitehead2} where similar questions are studied.

%

\medskip

Let us point out that we focused solely on ladder systems ranging over all countable limit ordinals. It would be very interesting to see how the tree uniformization theory changes if one restricts the ladder system to a stationary, co-stationary set. The above cited result shows that this distinction is crucial in the classical context.


\begin{prob}
Suppose that $\textbf{C}$ is a stationary, co-stationary ladder system. Does CH imply that whenever $T$ is a Suslin tree then there is a (monochromatic) 2-colouring of $\textbf{C}$ with no $T$-uniformization?
\end{prob}

Finally, it would be interesting to look at ladder systems and trees on $\aleph_2$, and to study uniformizations there. This was initiated  in \cite[Appendix 3]{shelah2017proper} for the classical theory. Whether the uniformization property here also gives consequences on minimal linear orders of size $\aleph_2$ is an interesting question  for future research.

\bibliographystyle{plain}
\bibliography{thesis}

\end{document}